\newcommand{\V}{\mathcal{V}}
\DeclareMathOperator{\gldim}{gldim}
\DeclareMathOperator{\GKdim}{GKdim}
\DeclareMathOperator{\Ext}{Ext}
\DeclareMathOperator{\cov}{cov}
\newcommand{\bfl}{\mathfrak l}
\newcommand{\Hom}{\operatorname{Hom}}
\newcommand{\Spec}{\operatorname{Spec}}
\newcommand{\Proj}{\operatorname{Proj}}
\newcommand{\MCM}{\operatorname{MCM}}
\newcommand{\gr}{\operatorname{gr}}
\newcommand{\tors}{\operatorname{tors}}
\newcommand{\rank}{\operatorname{rank}}
\newcommand{\Z}{\mathbb{Z}}
\newcommand{\lrr}{l_{\Re}}
\newcommand{\NN}{\mathbb{N}}
\newcommand{\fp}{\mathfrak{p}}
\newcommand{\la}{\langle}
\newcommand{\ra}{\rangle}
\newcommand{\tensor}{\otimes}
\newcommand{\tsr}{\tensor}
\newtheorem{theorem}{Theorem}[subsection]
\newtheorem{lemma}[theorem]{Lemma}
\newtheorem{corollary}[theorem]{Corollary}
\newtheorem{proposition}[theorem]{Proposition}
\newtheorem{question}[theorem]{Question}
\theoremstyle{definition}
\newtheorem{definition}[theorem]{Definition}
\newtheorem{example}[theorem]{Example}
\newtheorem{notation}[theorem]{Notation}
\renewcommand{\i}{\mathbbm{i}}
\renewcommand{\k}{\mathbb K}
\newcommand{\coker}{\hbox{coker}}
\renewcommand{\l}{\lambda}
\newcommand{\G}{\Gamma}
\newcommand{\z}{\zeta}
\renewcommand{\t}{\tau}
\renewcommand{\P}{{\mathbb{P}}}
\newcommand{\Q}{{\mathbb{Q}}}
\newcommand{\mc}{\mathcal}
\newtheorem{remark}[theorem]{Remark}
\numberwithin{equation}{section}
\begin{document}

\title[AS regular algebras and their geometry]{Some Artin-Schelter regular algebras from dual reflection groups and their geometry}



\author{Peter Goetz}
\address{
California State Polytechnic University, Humboldt, 
Department of Mathematics,
Arcata, CA 95521}
\email{pdg11@humboldt.edu}

\author{Ellen Kirkman}
\address{Wake Forest University, Department of Mathematics and Statistics, P. O. Box 7388, Winston-Salem, North Carolina 27109} 
\email{kirkman@wfu.edu}

\author{W. Frank Moore}
\address{Wake Forest University, Department of Mathematics and Statistics, P. O. Box 7388, Winston-Salem, North Carolina 27109}
\email{moorewf@wfu.edu}

\author{Kent B. Vashaw}
\address{University of California Los Angeles, Department of Mathematics, Los Angeles, California 90095}
\email{kentvashaw@math.ucla.edu}
\keywords{dual reflection group, Koszul algebras, quadratic algebras, AS regular, noncommutative invariant theory, quantum projective space, point scheme, line scheme}
\subjclass[2020]{16S38, 16W22, 16T05, 16E65,16S37, 16W50}

\date{}

\dedicatory{}

\begin{abstract}
Let $G$ be a group coacting on an Artin-Schelter regular algebra $A$ homogeneously and  inner-faithfully. When the
identity component $A_e$ is also Artin-Schelter regular, providing a generalization of the Shephard-Todd-Chevalley Theorem, we say that $G$ is a dual reflection group for $A$.  We give two examples of dual reflection groups of order 16, and study algebraic and geometric properties of three associated Artin-Schelter regular algebras of dimension four. 
\end{abstract}

\maketitle
\section{Introduction} 


Let $\k$ be an algebraically closed field of characteristic zero. 
The classical Shephard-Todd-Chevalley Theorem (\cite{Ch,ST}) states that if $G$ is 
a finite group acting faithfully as graded automorphisms of 
${\k}[x_1, \dots, x_n]$,
then the fixed subring ${\k}[x_1,\dots, x_n]^G$ is isomorphic 
to ${\k}[x_1,\dots, x_n]$ if and only if $G$ is a  reflection group, and
Shephard and Todd classified the indecomposable complex reflection groups. Generalizations of the Shephard-Todd-Chevalley Theorem to the case where the commutative polynomial ring is replaced by an Artin-Schelter regular (AS regular) algebra $A$ (see Definition \ref{def2.1}), and the finite group is replaced by a Hopf algebra $H$, have been considered in works such as \cite{BB,C,FKMW1,FKMW2,KKZ1,KKZ2,KKZ3,Ki}.  In this paper we consider the action by the Hopf algebra 
 $H:=\k^G:=\Hom_{\k}(\k G, \k)$, or 
equivalently  a $G$-coaction, which provides an AS regular algebra $A$ with a 
$G$-grading, $A = \bigoplus A_g$, and the fixed subring $A^H$  is the identity 
component $A_e$. When $G$ is a finite abelian group the dual of the group algebra is isomorphic to the group algebra, giving a group action.  Hence we will always assume the group $G$ is a finite nonabelian group.

AS regular algebras were introduced by Artin and Schelter 
\cite{AS} in the 1980's, and include skew polynomial rings, iterated Ore extensions of AS regular algebras, and most of the Sklyanin algebras.
An AS regular algebra that is commutative is isomorphic to a commutative polynomial ring, hence the context of the invariants $A^H$ of a Hopf algebra $H$ acting on an AS regular algebra $A$ extends the classical (co)-commutative setting. The AS regular algebras of dimensions two and  three have been classified (\cite{ATV1,ATV2}), and current work on classifying AS regular algebras focuses on classifying AS regular algebras of dimension four (e.g. \cite{CV,ChV,CKS,CS1,CS2,CSV,LPWZ,RZ,SS,TV,V}). 
\begin{definition}\cite[Definition {\rm{0.1}}]{KKZ3}
Recall that the action of a Hopf algebra $H$ on an algebra $A$ is called inner faithful if there is no Hopf ideal $0 \neq I$ in $H$ with $I A = 0$. A finite group $G$ is called a 
{\it dual reflection group for an algebra $A$} if the Hopf algebra  $H:=\k^G$ acts
homogeneously and inner faithfully on a noetherian AS regular 
domain $A$ generated in degree 1 such that the fixed subring 
$A^H$ is again AS regular, i.e., the identity component of $A$ under 
the $G$-grading is AS regular. In this case we say that 
{\it $G$ coacts on $A$ as a dual reflection group}.  
\end{definition}
 The study of dual reflection groups began in \cite{KKZ3}, where certain necessary conditions for $G$ 	to coact as a dual reflection group on $A$ were proven; these conditions are reviewed in Section 2  (Theorem \ref{necessary}).  As part of his Master's degree thesis, using these necessary conditions, the fourth author wrote a program in Magma to find other potential dual reflection groups of order 16 and their corresponding quadratic AS regular algebras that are domains.  This work produced two new dual reflection groups, the semidihedral group and the ``modular group" (called $M_{16}$ in Magma)  and their associated AS regular algebras, whose relations were obtained from conditions on the coaction of the group on $A$ necessary to produce an AS regular fixed ring $A_e$.  The modular and semidihedral  groups are two of the four nonabelian groups of order 16 that have a cyclic subgroup of index two. The groups determine the relations of $A$ up to coefficients on the generators; a second choice of coefficients on the algebra associated to the semidihedral group produced the third AS regular algebra. We denote by $R$ (Definition \ref{definition R}) the algebra associated to the modular group, by $S$ (Definition \ref{definition S}), and by $T$ (Definition \ref{definition T}) the algebras associated to the semidihedral group. We show that the algebra $R$ is AS regular in Proposition \ref{r-props} and that $S$ and $T$ are AS regular in Theorem \ref{regular}. The algebra $S$ was mentioned in \cite[Remark 4.2(d)]{TV} before a complete proof that it was AS regular had been given.

The AS regular algebras that arose from the two groups are also interesting as they are noetherian PI AS regular of dimension four (Corollary \ref{consequences of PI}) and,  due to the way they are constructed,  their relations are binomial relations.   Their geometry is also interesting. The AS regular algebras $S$ and $T$ have  a point scheme of twenty points 
(Theorem \ref{point schemes, automorphisms of S, T}) and a one-dimensional line scheme with ten components, each of degree two (Theorem \ref{line scheme of S, T}). Furthermore, we show that each component of the line scheme of the algebra $S$ or $T$ corresponds to a canonically defined smooth quadric in $\P^3$; of the two rulings on each of these quadrics, exactly one of the rulings corresponds to line modules (Theorem \ref{lines in P^3 for S, T}).
The AS regular algebra $R$ is a double Ore extension (Proposition \ref{r-props}), in the sense of J.J. Zhang and J. Zhang \cite{ZZ1, ZZ2}.  Its point scheme has six irreducible components, each isomorphic to a $\mathbb{P}^1$ embedded in $\mathbb{P}^3$ (Theorem \ref{point schemes, automorphisms of R}). Its line scheme has dimension two and consists of seven components that sit in $\mathbb{P}^5$. Four of the components correspond by the Pl\"{u}cker embedding to the space of all lines in $\mathbb{P}^3$ sitting in four hyperplanes in $\P^3$; the other three components are smooth quadrics in $\mathbb{P}^3$ (hence isomorphic to $\mathbb{P}^1 \times \mathbb{P}^1$) (Theorem \ref{line scheme of R}).  The geometry of the line schemes of $R,S$ and $T$ is different than that observed in the AS regular algebras of dimension four studied in \cite{CV, CTVW, CV2, CS1,CS2, CSV}. 

 The point schemes and line schemes of algebras have been used in classifying AS regular algebras.  Artin, Tate, and Van den Bergh used point schemes to classify AS regular algebras of dimension three \cite{ATV1}. Moreover, they showed that the point scheme is the graph of an automorphism, and the algebra is PI if and only if the automorphism has finite order. 
 In unpublished work in the mid-1990s Van den Bergh proved that any quadratic algebra with four generators and six generic relations has at most twenty nonisomorphic point modules (counted with multiplicity). If the algebra is also Auslander regular of global dimension four then the line scheme is one-dimensional (see \textsection 1D of \cite{V}). Hence the four-dimensional AS regular algebras with four generators and six relations, where the point scheme has twenty points and the line scheme has dimension one, are referred to as generic quadratic AS regular algebras; the first such algebra was constructed by Shelton and Tingey in \cite{ShelTing} and more examples using graded skew-Clifford algebras were constructed in \cite{CV}. In \cite{ShelTing} it is shown that the defining relations of a quadratic AS regular algebra of dimension four that has four generators, six relations, and a one-dimensional line scheme, satisfying some other homological conditions,  are determined by the line scheme. In \cite[Conjecture 4.2]{ChV} it is conjectured that the line scheme of the most generic quadratic AS regular algebra of dimension four with four generators and six relations is the union of two spatial elliptic curves and four planar elliptic curves.

 The paper is organized as follows. In \Cref{sect-prelim}, basic definitions and properties of graded rings and modules are given. Conditions on a group $G$ necessary to produce an AS regular algebra $A$ on which $G$ coacts with identity component AS regular are given, and the algebras $R$, $S$ and $T$ are defined. In \Cref{sect-groups-dual-ref} we prove that the algebras $R$, $S$ and $T$ are AS regular, that $R$ is graded by the modular group of order 16 with identity component AS regular, and that $S$ and $T$ are graded by the semidihedral group of order 16 with identity components AS regular. We also discuss further algebraic properties of $R$, $S$ and $T$. The geometric properties of $R$, $S$ and $T$ are given in \Cref{sect-geom}, where the point schemes, line schemes, and incidence relations are given. The paper concludes in \Cref{sect-questions} with problems for further study.

\subsection*{Acknowledgements}

We thank Michaela Vancliff for her interest in this work, and for particularly helpful comments on an early draft that substantially improved the organization of this paper. 

The fourth author was partially supported by NSF postdoctoral fellowship DMS-2103272 and by an AMS-Simons Travel Grant. This paper contains work supported by the National Science Foundation under Grant No. DMS-1928930, while the second author was in residence at the Simons-Laufer Mathematical Sciences  Institute in Berkeley, California, during the spring semester of 2024.

\section{Preliminaries} 
\label{sect-prelim}
Throughout $\k$ is an algebraically closed field of characteristic zero.

 \subsection{Dual reflection groups: necessary conditions}
 \label{subsect-dual-reflect} 
  A $\k$-algebra $A$ is called {\it connected graded} if
$$A=\k \oplus A_1\oplus A_2\oplus \cdots$$
for $A_i$ finite dimensional and $A_iA_j\subseteq A_{i+j}$ for all $i,j\in {\mathbb N}$.
The {\it Hilbert series} of $A$ is defined to be
$$H_A(t)=\sum_{i\in {\mathbb N}} (\dim_{\k} A_i)t^i.$$
The algebras that we use to replace commutative polynomial rings
are the AS regular algebras \cite{AS}. We recall the definition
below.

\begin{definition}
\label{def2.1}
A connected graded algebra $A$ is called {\it Artin-Schelter regular} 
(or {\it AS regular}) if the following conditions hold:
\begin{enumerate}
\item[(a)]
$A$ has global dimension dimension $d<\infty$ on
the left and on the right,
\item[(b)]
$\Ext^i_A(_A\k,_AA)=\Ext^i_{A}(\k_A,A_A)=0$ for all
$i\neq d$,
\item[(c)]
$\Ext^d_A(_A\k,_AA)\cong \Ext^d_{A}(\k_A,A_A)\cong \k(\bfl)$ for some
integer $\bfl$, and 
\item[(d)]
$A$ has finite Gelfand-Kirillov dimension.
\end{enumerate}
\end{definition}
 In addition, we will assume throughout that $A$ is a noetherian integral domain generated in degree one that has an additional grading by a finite group $G$ defined on its generating set.
 Throughout let $e$ denote the identity element of $G$. 

We will consider coactions on $A$ that are defined homogeneously on the generators of $A$ which are contained in $A_1$.  If the action of a semisimple Hopf algebra $H$ is inner faithful, then it follows from a result of Rieffel \cite{R} (see also \cite{FKMW1}) that all of the simple $H$-modules occur as summands in the decomposition of some tensor power of the $H$-module $A_1$.  For a group coaction that requirement means that the group grades of the generators of $A$ must be a generating set (but not necessarily a minimal generating set) of $G$.

In \cite{C}, Crawford completely classified the dual reflection groups for any noncommutative Artin-Schelter algebra $A$ of global dimension two.  He has shown the following.
\begin{example} \label{Craw}
 For a finite nonabelian group $G$ to 
 coact homogeneously and inner faithfully on $A$, then for some $m \geq 2,$ there exist $a$ and $b$ in $G$ satisfying $a^2=b^2$, and $A= \k \langle u, v \rangle/(u^2-v^2)$ with  grade $ u= a$ and grade $v= b$. The group will coact as a dual reflection group on $A$ if and only if we have $G = \langle a, b~|~ a^2 = b^2, a^{4m} = (ab)^{m} = e\rangle.$ The element $a^2$ has order $2m$ and is central in $G$, and $G/(a^2) \cong D_{2m}$, the dihedral group of order $2m$ and $|G| = 4m^2$. The subring of invariants under this coaction is the commutative polynomial ring $\k[(uv)^m,(vu)^m]$.
\end{example}

Next we recall some definitions from \cite{KKZ3}.
We say $\Re\subseteq G$ is a set of generators of $G$ if $\Re$ 
generates $G$ and $e\not\in \Re$ ($\Re$ need not be a minimal set of generators of $G$). 
\begin{definition} \cite[Definition 2.1]{KKZ3}
\label{def2.2}
Let $\Re$ be a set of generators of a group $G$.
The {\it length} of an element $g\in G$ with respect to 
$\Re$ is defined to be 
$$\lrr(g):=\min\{ n \mid v_1\cdots v_n=g, \; {\text{for some $v_i\in \Re$}}\}.$$
We define the length of the identity $e$ to be 0.  
\end{definition}
\begin{definition} \cite[Definition 2.2]{KKZ3}
\label{xxdef2.2}
Let $\Re$ be a set of generators of $G$.
The {\it Poincar{\'e} polynomial} associated to $\Re$ is defined to be
$$\fp_\Re(t)=\sum_{g\in G} t^{\lrr(g)}.$$
\end{definition}

This terminology was borrowed from the study of Coxeter groups.
The notion of length of a group element with respect to $\Re$, 
where $\Re$ is the set of Coxeter generators, is standard for 
Coxeter groups \cite[p.15]{BjB}, and its generating function 
is called the Poincar\'{e} polynomial in \cite[p. 201]{BjB}.  

\begin{theorem} \cite[Theorem 3.5]{KKZ3}
\label{necessary} Let $A$ be a noetherian AS regular domain
and $G$ a finite group that coacts on $A$ as a dual reflection group.
Let $H=\k^G$.   Then:
\begin{enumerate}[(1)]
\item
$I:=A (A_e)_{\geq 1}$ is a two-sided ideal and the covariant ring
$A^{\cov\; H}:=A/I$ is Frobenius.
\item 
Let $n_g$ denote the dimension of $A_g \cap A_1$. The set $\Re:=\{g\in G\mid n_g>0, g\neq e\}$ generates $G$. 
\item
The Hilbert series of
$A^{\cov\; H}$ is $p_{\Re}(t)$, the Poincar\'{e} polynomial of $G$ 
relative to $\Re$, and $p_{\Re}(t)$ is a product of cyclotomic
polynomials. 
\end{enumerate}
\end{theorem}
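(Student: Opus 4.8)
The plan is to deduce all three parts from one structural fact: $A$ is free as a module over $A_e$, and in fact each graded component $A_g$ is free of rank one over $A_e$. I would dispose of part (2) first, since it uses only inner faithfulness. As $\k^G$ is semisimple, Rieffel's theorem forces every simple $\k^G$-module — and these are indexed by $G$ — to occur in some tensor power of the $\k^G$-module $A_1$; the simple indexed by $g$ occurs in $A_1^{\otimes k}$ precisely when $g$ is a product of $k$ of the group elements arising as grades of $A_1$, so every $g$ is attained exactly when $\Re$ generates $G$. Moreover, since $A$ is a domain, the product of nonzero degree-one elements realizing a shortest $\Re$-word for $g$ is a nonzero element of $A_g$; hence $A_g\neq 0$ for every $g$, and the least internal degree in which $A_g$ is nonzero equals $\lrr(g)$ (no shorter product of generators can have grade $g$, as $e\notin\Re$ and $A$ is generated in degree one).

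Next I would establish the freeness. $A$ is module-finite over $A_e=A^{\k^G}$ on both sides (a standard Noether-type fact for semisimple Hopf actions on noetherian algebras), so $A/A(A_e)_{\geq 1}$ and $A/(A_e)_{\geq 1}A$ are finite-dimensional; consequently the local cohomology of $A$ with support in $(A_e)_{\geq 1}$ agrees with that with support in $A_{\geq 1}$, which — since noetherian AS regular algebras are Cohen--Macaulay with $\GKdim=\gldim$ — is concentrated in cohomological degree $\GKdim A=\gldim A_e$. Thus $\operatorname{depth}_{A_e}A=\gldim A_e$, so the graded Auslander--Buchsbaum equality together with graded Nakayama's lemma gives that $A$ is $A_e$-free on each side. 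Each $A_g$ is an $A_e$-bimodule direct summand of $A$, hence free of some rank $r_g\geq 1$ on each side; passing to division rings of fractions, $\operatorname{Frac}(A)$ is a $G$-graded division ring over $\operatorname{Frac}(A_e)$ whose homogeneous components are one-dimensional over $\operatorname{Frac}(A_e)$ (inverses of nonzero homogeneous elements are homogeneous, as $A$ is a domain) and all nonzero, so $[\operatorname{Frac}(A):\operatorname{Frac}(A_e)]=|G|=\sum_g r_g$, forcing $r_g=1$ for every $g$. Thus $A_g=x_gA_e=A_ex_g$ for a homogeneous element $x_g$ of grade $g$ and internal degree $\lrr(g)$.

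With this in hand, parts (1) and (3) become bookkeeping. Matching dimensions in each internal degree of the rank-one module $A_g$ shows $x_g(A_e)_{\geq 1}=(A_e)_{\geq 1}x_g$, whence $I=A(A_e)_{\geq 1}=\bigoplus_{g\in G}x_g(A_e)_{\geq 1}=\bigoplus_{g\in G}(A_e)_{\geq 1}x_g=(A_e)_{\geq 1}A$ is a two-sided ideal, and $A^{\cov\; H}=A/I=\bigoplus_{g\in G}x_g\bigl(A_e/(A_e)_{\geq 1}\bigr)$ has each component $(A^{\cov\; H})_g$ one-dimensional, concentrated in internal degree $\lrr(g)$; hence $H_{A^{\cov\; H}}(t)=\sum_{g\in G}t^{\lrr(g)}=p_\Re(t)$. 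For the Frobenius property I would tensor an AS--Gorenstein free resolution of $\k$ over $A_e$ with the flat $A_e$-module $A$ to get a finite free resolution of $A^{\cov\; H}$ over $A$, and then apply $\Hom_A(-,A)$: this yields $\Ext^i_A(A^{\cov\; H},A)=\Ext^i_{A_e}(\k,A_e)\otimes_{A_e}A$, which vanishes except in the top cohomological degree, where it is $A^{\cov\; H}$ up to a degree shift. This relative-Gorenstein behavior over the AS regular algebra $A$ is exactly what forces the finite-dimensional algebra $A^{\cov\; H}$ to be Frobenius — the same mechanism by which the coinvariant algebra of a classical reflection group is a Poincar\'e duality algebra.

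Finally, that $p_\Re(t)$ is a product of cyclotomic polynomials: for any noetherian AS regular $B$ the inverse Hilbert series $q_B(t):=H_B(t)^{-1}$ is a polynomial in $\ZZ[t]$ with constant term $1$ and leading coefficient $\pm 1$; finiteness of $\GKdim B$ makes $H_B$ have radius of convergence $1$, so every root of $q_B$ has modulus $\geq 1$, while the functional equation relating $H_B(t^{-1})$ and $H_B(t)$ for AS--Gorenstein algebras makes the root set stable under $t\mapsto t^{-1}$; hence all roots of $q_B$ lie on the unit circle, and Kronecker's theorem makes $q_B(t)$ a product of cyclotomic polynomials. Applying this to $B=A$ and $B=A_e$, the polynomial $p_\Re(t)=H_A(t)/H_{A_e}(t)=q_{A_e}(t)/q_A(t)$ is a ratio of products of cyclotomic polynomials, so — since $\Q[t]$ is a UFD in which the $\Phi_n$ are distinct irreducibles — it is itself a product of cyclotomic polynomials. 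The main obstacle in the plan is the freeness of $A$ over $A_e$ and its refinement to rank one for each $A_g$; once those are in place, the remaining ingredients are standard (Cohen--Macaulayness of noetherian AS regular algebras, the relative-Gorenstein criterion for the Frobenius property) or elementary.
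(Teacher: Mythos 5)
You should know at the outset that the paper contains no proof of this statement: Theorem \ref{necessary} is quoted verbatim from \cite[Theorem 3.5]{KKZ3}, so there is no internal argument to compare yours against; the comparison has to be with that source. Your skeleton is the right one and, as far as I can tell, matches the architecture used there: the crux is that each component $A_g$ is a free left and right $A_e$-module of rank one generated by a homogeneous element $x_g$ of internal degree $\lrr(g)$, after which $I=A(A_e)_{\geq 1}=(A_e)_{\geq 1}A$, the Hilbert series $p_\Re(t)$ of $A/I$, and the cyclotomic factorization (via $p_\Re=q_{A_e}/q_A$ with $q_B(t)=H_B(t)^{-1}$ a monic-up-to-sign integer polynomial with roots on the unit circle) all follow as you describe. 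Part (2) via Rieffel and the identification of the bottom degree of $A_g$ with $\lrr(g)$ are also fine.

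However, several load-bearing steps are asserted rather than proved, and two of them are genuine gaps as written. First, the reduction to freeness is not a formality: module-finiteness of $A$ over $A_e$ is a theorem in this graded setting (not a ``standard Noether-type fact'' for arbitrary semisimple Hopf actions), and your depth computation conflates $\GKdim$ with the cohomological degree where local cohomology sits; what the Auslander--Buchsbaum argument needs is concentration in degree equal to the injective dimension for both $A$ and $A_e$, plus the equality $\gldim A=\gldim A_e$, which you can extract because $A_e$ is an $A_e$-bimodule direct summand of $A$, so $H^{\gldim A_e}_{\fm_{A_e}}(A)\neq 0$. Second, the rank-one count through fraction rings is the weakest link: you need the nonzero elements of $A_e$ (or the $G$-homogeneous elements) to form an Ore set in $A$, and you need the identity component of the resulting graded division ring to be $\operatorname{Frac}(A_e)$; neither is automatic from ``$A$ is a noetherian domain.'' A cleaner fix avoids localization entirely: for any $0\neq y\in A_{g^{-1}}$, right multiplication by $y$ is an injective left $A_e$-module map $A_g\to A_e$ (because $A$ is a domain), so the free module $A_g$ has rank at most one, hence exactly one. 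Third, the Frobenius conclusion is the substantive content of (1) and cannot be waved through: your base-change computation gives $\Ext^i_A(A/I,A)=0$ for $i\neq d$ and a graded vector space of the right Hilbert series in degree $d$, but to conclude that the finite-dimensional algebra $A/I$ is Frobenius you must identify $\Ext^d_A(A/I,A)$ with the $\k$-linear dual of $A/I$ (graded local duality over the noetherian AS regular algebra $A$, via its balanced dualizing complex) \emph{as a one-sided $A/I$-module}, and check that this Ext group is a cyclic, hence free rank-one, $A/I$-module; the statement that ``relative-Gorenstein behavior forces Frobenius'' is exactly this argument and needs to be written out.
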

Note that $p_{\Re}(1)= |G|$, and if $\Phi_n(t)$ is the $n^\text{th}$ cyclotomic polynomial then \[  \Phi_n(1) = \left\{ \begin{array}{ccl}  p & & n = p^r~{\rm for}~p~{\rm prime} \\ 1 & & {\rm otherwise.}\end{array}\right. \]
\begin{example} \cite[Example 3.7]{KKZ3}
\label{ex2.5} The dihedral group $D_{8}$ of order 8  is generated by
$r$ of order two and $\rho$ of order four subject to the relation 
$r\rho=\rho^3 r$. Let $A$ be generated by 
$x,y,z$ subject to the relations
$$
zx = q xz,\quad
yx = a zy,\quad
yz = xy,
$$
for any $a\in \k^{\times}$ and $q^2=1$. It is easy to see
that $A=\k_{q}[x,z][y; \sigma]$, where $\sigma$ sends $z$ to $x$
and $x$ to $az$. Hence $A$ is an AS regular algebra of global dimension three
that is not PI (for generic $a$). Define the
$G$-degree of the generators of $A$ as
$$\deg_G(x)=r, \quad \deg_G(y)= r\rho, \quad \deg_G(z)=
r\rho^2.$$
The group $G$ coacts on 
$A$ homogeneously and inner faithfully, and 
$A_e=\k[x^2,z^2][y^2, \tau]$
where $\tau=\sigma^2\mid_{\k[x^2,z^2]}$. Therefore
$A_e$ is AS regular and $G$ is a dual reflection group.  
Here $\Re = \{r, r\rho, r\rho^2\}$, and the Poincar\'{e} 
polynomial is $p_{\Re}(t) = (1+t)^3$.

\end{example}
We do not know another $n$ with $D_{2n}$ 
a dual reflection group, and hence the class of dual 
reflection groups appears to be different from the 
class of reflection groups.  The dihedral group of
order 8 is an example of the following general construction.
\begin{example}
\label{wreath}
 Let
$W_i = \langle w_i \rangle $ be a cyclic group order $r$,
 $U = W_1 \times \cdots \times W_n$,
$V = \langle v \rangle $ a cyclic group of order $np$, and
$\phi:V \longrightarrow S_n$ be given by $\phi(v) = (1,2, \ldots , n)$.
Fix $q:= \pm 1$ and $R := \k_q[x_1, x_2, \ldots , x_n]$, and let
{$A := R[y;\tau]$}, where $\tau(x_i) := x_{i+1}$ for $i = 1, 2, \ldots , n-1$ and $\tau(x_n) :=x_1$. Let $G$ be the
semi-direct product {$U \rtimes_{\phi}V$} with 
  $(u_1v_1)(u_2v_2) =(u_1\phi_{v_1}(u_2))(v_1v_2)$.
Then {$A$} is {$G$}-graded by $\deg_G(x_i)= w_i$ and $ \deg_G(y)= v$.  
The invariant algebra is given by ${A}^{\k^G} = \k_Q[x_1^r, x_2^r, \ldots , x_n^r][y^{np}]$, where $Q=q^{r^2}$, an AS regular algebra.
\end{example}

Returning to case Example \ref{Craw} for $m=2$ we have the following:
\begin{example}
The independent monomials in $A = \k \langle u, v \rangle/(u^2-v^2)$ are of the form $u^i(vu)^jv^k$ for $k = 0$ or $1$.  In this case $G$ is the modular group of order 16 (generated by $a$  and $ab$ where $a^8 = (ab)^2 = e$, $(ab) a (ab)^{-1} = abaab=a^3 b^2 = a^5$ with and we can take
$\Re = \{a,b\}$, and find that:

\begin{center}
\begin{tabular}{|c|c|c|c|c|c|c|c|}
\hline
Length   & $0$ & $1$ & $2$ & $3$ & $4$ & $5$ & $6$ \\ \hline
Elements & $e$ & $a,b$ & $a^2,ab,ba$ & $a^3, a^2b, aba, bab$
         & $a^4, a^3b, a^2ba$ &  $a^5, a^4b$ & $a^6$ \\ \hline
\end{tabular}
\end{center}
Therefore, the Poincar\'{e} polynomial is $$\fp_\Re(t )= 1 +2t+3t^2 + 4t^3 + 3t^4+2t^5 +t^6 = (t+1)^2(t^2+1)^2,$$
and the Hilbert series of the fixed ring $A^H = \k[(uv)^2, (vu)^2]$ is  
\[
H_{A^H}(t) = H_A(t)/ \fp_\Re(t ) = 1/((1-t)^4(t+1)^2(t^2+1)^2) = 1/(1-t^4)^2.
\]
\end{example}

Based on the necessary properties of a dual reflection group given in Theorem \ref{necessary}, the fourth author approached a search for dual reflection groups in the following way. He wrote a program in the computer algebra system Magma that for each nonabelian group $G$ of order 16 found all possible generating sets for the group, and then computed their Poincar\'{e} polynomials.  For each Poincar\'{e} polynomial that was a product of cyclotomic polynomials, he wrote out a possible multiplication table for an algebra whose degree one generators followed that group grading so that Theorem \ref{necessary} was satisfied.  From this table a relation was formed equating those degree 2 products of the same group grade so that the resulting algebra would be graded by $G$. Only algebras with quadratic relations were considered.  This process led to some examples such as Example \ref{wreath} and to the three new examples that we introduce next.

We illustrate how the algebras were originally defined using the method described above.  We first consider the algebra $R$ related to the modular group $M_{16}$; the construction of the algebra $S$ related to the semidihedral group $SD_{16}$ is similar. The algebra $T$ was constructed by considering scalar coefficients in the relations of $S$, which preserve the grading, while maintaining the AS regularity of the algebra.  When we prove properties of these algebras in a subsequent paper we will use more conventional presentations of the groups.  The group presentations we use here are the ones found in Magma.

We define the algebra $R$ from the group of order 16  
that  Magma calls $M_{16}$ and is often called the ``modular group" because its lattice of subgroups is a modular lattice.
 Magma presents $M_{16}$ as
$$M_{16}:= \langle a,b,c,d \; | \; a^2=c, b^2 = d^2 =e, c^2 = d, a^{-1}ba = bd, \mbox{ with } c,d \mbox{ central} \rangle.$$

To define the algebra  $R$ on which $M_{16}$ coacts  we use the generating set  $\Re = \{a, acd, ab, abc \}$ which has
Poincar\'{e} polynomial 
 $\fp_\Re(t ) = (1+t)^4$, a product of cyclotomic polynomials. The products of all pairs of these elements is:
\begin{center}
\begin{tabular}{|cc||c|c|c|c|}
\hline
&& {$a$} & {$acd$} & {$ab$}& {$abc$}\\
\hline
\hline
 & {$a$} & {$c$} & {$e$} & {$bc$} &{$bd$}\\
\hline
 &{$acd$}&{$e$}& {$cd$} &{$b$} & {$bc$}\\
\hline
 & {$ab$}&{$bcd$} &{$bd$} &{$cd$} &{$e$}\\
\hline
 &{$ abc$} &{$b$} &{$bcd$} &{$e$} &{$c$}\\
\hline
\end{tabular}
\end{center}
Hence we define an algebra with four generators $x_1, x_2, x_3, x_4$ and assign the generators the
group grades: grade $x_1 = a$, grade $x_2 = acd$,  grade $x_3 = ab$, and grade $x_4 =abc$.  To obtain
a graded algebra with six binomial relations we identify the elements in the non-identity grades.
This gives us the algebra  $R$.
\begin{definition}\label{definition R}
Let $$R = \frac{\k \la x_1, x_2, x_3, x_4 \ra}{\la x_1^2-x_4^2, x_2^2-x_3^2, x_4x_1-x_2x_3, x_1x_4-x_3x_2, x_1x_3-x_2x_4,x_3x_1-x_4x_2 \ra}.$$  Grade 
$R$  with the generating set of the modular group $\Re = \{a, acd, ab, abc\}$ assigning group grades:
$$\text{grade } x_1 = a, \quad \text{grade } x_2 = acd, \quad \text{grade } x_3 =  ab, \quad \text{grade } x_4  = abc.$$
The grades of the relations of $R$ as follows:
$$\begin{array}{ll}
x_1^2 - x_4^2 \,\;\qquad(\mbox{grade } c),  &
x_2^2 - x_3^2 \,\;\qquad(\mbox{grade } cd), \\
x_4x_1 - x_2 x_3       ~(\mbox{grade } b),  &
x_1x_4 - x_3 x_2       ~(\mbox{grade } bd), \\
x_1x_3 - x_2x_4        ~(\mbox{grade } bc), &
x_3x_1 - x_4x_2        ~(\mbox{grade } bcd).
\end{array}$$

\end{definition}
In Proposition \ref{r-props} we will show that $R$ can be expressed as a double Ore extension, and later we will use the presentation of $R$ given in the proof of that proposition.

Next we define the algebra $S$ from the group of order 16 that  Magma calls  the ``semidihedral group" and  presents as
$$SD_{16}:= \langle a,b,c,d \; | \; a^2=c^2=d, b^2 = d^2 =e, $$$$ a^{-1}ba = bc, \; a^{-1}ca = cd,\; b^{-1}cb = cd,\mbox{ with } d \mbox{ central} \rangle.$$
To define the algebra  $S$ on which $SD_{16}$ coacts  we use the generating set  $\Re = \{b, bc, ab, abcd \}$ which has
Poincar\'{e} polynomial 
 $\fp_\Re(t ) = (1+t)^4$, a product of cyclotomic polynomials. 
\begin{definition} \label{definition S}
For the semidihedral $SD_{16}$, let
$$S = \frac{\k \la x_1, x_2, x_3, x_4 \ra}{\la x_1x_2-x_3^2, x_4^2-x_2x_1, x_1x_3-x_2x_4, x_4x_1-x_3x_2, x_2x_3-x_3x_1, x_4x_2-x_1x_4 \ra}.$$  
We endow $S$ with an $SD_{16}$-grading via $$ \text{grade } x_1 = b,\quad \text{grade } x_2 = bc,\quad  \text{grade } x_3 = ab,\quad  \text{grade } x_4 = abcd.$$
The grades of the relations of $S$ are given by:
$$\begin{array}{ll}
x_3^2 - x_1x_2  \;\quad\mbox{(grade $c$)},   &
x_4^2 - x_2x_1   \;\quad\mbox{(grade $cd$)},  \\
x_1x_3 - x_2 x_4 ~\mbox{(grade $acd$)}, &
x_3x_1 - x_2 x_3 ~\mbox{(grade $a$),}   \\
x_1x_4 - x_4x_2  ~\mbox{(grade $ad$)},  &
x_4x_1 - x_3x_2  ~\mbox{(grade $ac$)}.
\end{array}$$
\end{definition}
After investigating if adding scalar coefficients to these relations would produce a family of AS regular algebras, we found only the following algebra $T$, whose relations differ from those of $S$ in two signs.
\begin{definition}\label{definition T}
Let $$T = \frac{\k \la x_1, x_2, x_3, x_4 \ra}{\la x_1x_2-x_3^2, x_4^2
+x_2x_1, x_1x_3-x_2x_4, x_4x_1-x_3x_2, x_2x_3-x_3x_1, x_4x_2+x_1x_4 \ra}.$$  Grade 
$T$  with the generating set $\Re = \{b, bc, ab, abcd\}$ assigning group grades:
$$\text{grade } x_1 = b, \quad \text{grade } x_2 = bc, \quad \text{grade } x_3 =  ab, \quad \text{grade } x_4  = abcd.$$
The grades of the relations of $T$ are as follows:
$$\begin{array}{ll}
x_3^2 - x_1x_2 \;\quad\mbox{(grade $c$)}, &
x_4^2 + x_2x_1 \;\quad\mbox{(grade $cd$)}, \\
x_1x_3 - x_2 x_4~\mbox{(grade $acd$)}, &
x_3x_1 - x_2 x_3~\mbox{(grade $a$)}, \\
x_1x_4 +x_4x_2~\mbox{(grade $ad$)}, &
x_4x_1 - x_3x_2~\mbox{(grade $ac$)}.
\end{array}$$
\end{definition}

Next, we present some tools for studying
$R,S$ and $T$.


\subsection{Graded algebras as modules over subalgebras}

The following result has a straightforward proof based on the graded Nakayama lemma. We omit the proof.

\begin{lemma}\label{new f.g.}
Let $A$ be an $\NN$-graded ring, and let $Y = \{y_i\}_{i = 1}^n \subset A$ be a set of homogeneous elements of positive degree. Let $B$ be the subring of $A$ generated by $Y$. Let $I$, respectively $J$, denote the two-sided ideal of $A$, respectively $B$, generated by $Y$. If $I = JA$ and $A/I$ is finitely generated as a left $B/J$-module, then $A$ is finitely generated as a left $B$-module. More precisely, if $\{f_i+I : 1 \leq i \leq k, \, f_i \in A_{d_i}\} \subset A/I$ is a generating set for $A/I$ as a left $B/J$-module, then $\{f_i : 1 \leq i \leq k\}$ is a generating set for $A$ as a left $B$-module. 

If $I = AJ$, a similar statement holds for right modules, mutatis mutandis.
\end{lemma}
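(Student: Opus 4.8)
The plan is to run a graded Nakayama-type argument directly on the left $B$-submodule generated by the proposed lifts. Set $N := \sum_{i=1}^{k} B f_i \subseteq A$. Since $B$ is a graded subring of $A$ (being generated by the homogeneous elements $Y$) and each $f_i \in A_{d_i}$ is homogeneous, $N$ is a graded left $B$-submodule of $A$. The goal is to prove $N = A$; because the generating set is finite, this immediately yields that $A$ is a finitely generated left $B$-module, and it also gives the ``more precisely'' clause.

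First I would record the two structural facts that make the argument go. Since $Y \subset A_{\geq 1}$, the two-sided ideal $J = \sum B y_i B$ of $B$ satisfies $J \subseteq A_{\geq 1}$, hence $J^m A \subseteq A_{\geq m}$ for every $m \geq 1$; also $J$ and therefore $I = JA$ are graded. Consequently $A/I$ is a graded left $B/J$-module (the $B$-action descends because $JA \subseteq I$), so it is legitimate to take its finite generating set to consist of homogeneous elements $f_i + I$ with $f_i \in A_{d_i}$. The hypothesis that these generate $A/I$ over $B/J$ is exactly the identity $A = N + I$.

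Next I iterate using $I = JA$. From $A = N + JA$ we obtain $JA = JN + J^{2}A \subseteq N + J^{2}A$, hence $A = N + J^{2}A$, and inductively $A = N + J^{m}A$ for all $m \geq 1$. Now let $a \in A_d$ be homogeneous. Taking $m = d+1$, write $a = n + r$ with $n \in N$ and $r \in J^{d+1}A \subseteq A_{\geq d+1}$; comparing degree-$d$ components and using that $N$ is graded shows that $a$ (which is its own degree-$d$ component) lies in $N_d \subseteq N$. Since $A$ is spanned by homogeneous elements, $A = N$. The right-module statement follows verbatim with $N := \sum f_i B$ and $I = AJ$, replacing $JA$ by $AJ$ throughout.

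The only genuinely delicate point is the bookkeeping that licenses the Nakayama step: one needs $A$ bounded below in degree (guaranteed by the $\NN$-grading), $J$ strictly degree-raising so that $J^{m}A$ is concentrated in degrees $\geq m$, and $N$ graded so that comparing homogeneous components is valid. Everything else is the formal telescoping $A = N + J^{m}A$, which uses no finiteness or noetherian hypothesis; finiteness of the generating set of $A/I$ enters only at the end, to pass from $A = N$ to finite generation of $A$ over $B$.
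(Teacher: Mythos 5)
Your argument is correct, and it is exactly the "straightforward proof based on the graded Nakayama lemma" that the paper invokes and omits: the hypothesis gives $A = N + I = N + JA$, the telescoping $A = N + J^mA$ together with $J^mA \subseteq A_{\geq m}$ and the gradedness of $N = \sum_i Bf_i$ forces $A = N$, and the right-module case is symmetric. The degree bookkeeping (that $J$ raises degrees, that $B$ and hence $N$ are graded because they are generated by homogeneous elements, and that the $B$-action on $A/I$ factors through $B/J$ since $JA \subseteq I$) is handled correctly, so there is nothing to add.
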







We will use the next result to determine the structure of the group-graded identity component of the algebras $R$, $S$ and $T$.

\begin{theorem}\label{free modules}
Let $A$ be a finitely generated, $\NN$-graded, $\k$-algebra with Hilbert series $H_A(t) = p(t)/(1-t)^n$, where $p(t) \in \k[t]$, $p(1) \neq 0$, $n \geq 1$. Let $Y = \{y_i\}_{i = 1}^n \subset A$ be a set of homogeneous elements of positive degree, say $\deg(y_i) = d_i > 0$. Let $B$ be the $\k$-subalgebra of $A$ generated by $Y$. Let $I$, respectively $J$, denote the two-sided ideal of $A$, respectively $B$, generated by $Y$. Assume that $I = JA$, $A/I$ is a finite-dimensional vector space over $B/J\cong \k$, and $H_{A/I}(t) = \prod_{i}(1-t^{d_i})H_A(t)$. Let $C$ be an $\NN$-graded, $\k$-algebra that is a domain with Hilbert series $H_{C}(t) = \prod_{i} (1-t^{d_i})^{-1}$, and assume that there is a graded algebra surjection $\pi: C \to B$. Then

\begin{itemize}
\item[(1)] $\pi$ is an isomorphism;
\item[(2)] $A$ is a free left $B$-module of rank $\dim_{\k} A/I$.
\end{itemize}

Similarly, if $I = AJ$, $A/I$ is a finite-dimensional vector space over $B/J\cong \k$,  and $H_{A/I}(t) = \prod_{i}(1-t^{d_i})H_A(t)$, then (1) holds and $A$ is a free right $B$-module of rank $\dim_{\k} A/I$.
\end{theorem}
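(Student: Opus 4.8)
The plan is to use the Hilbert-series bookkeeping to force $\pi$ to be injective, then use freeness of the resulting $C$-module structure on $A$. First I would combine Lemma \ref{new f.g.} with the hypotheses: since $I = JA$ and $A/I$ is finite-dimensional over $B/J \cong \k$, the lemma gives a finite homogeneous generating set $f_1, \dots, f_k$ of $A$ as a left $B$-module, with $k = \dim_\k A/I$ and the degrees $\deg f_i$ equal to the degrees appearing in $A/I$. Pulling back along $\pi$, the same elements generate $A$ as a left $C$-module via $c \cdot a := \pi(c) a$. This yields a graded surjection of left $C$-modules $\Phi: \bigoplus_{i=1}^k C(-\deg f_i) \twoheadrightarrow A$. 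Taking Hilbert series, $H_A(t) \le \left(\sum_i t^{\deg f_i}\right) H_C(t) = H_{A/I}(t)\, H_C(t)$ coefficientwise, and by the hypotheses $H_{A/I}(t) = \prod_i (1 - t^{d_i}) H_A(t)$ and $H_C(t) = \prod_i (1 - t^{d_i})^{-1}$, so the right-hand side is exactly $H_A(t)$. Hence $\Phi$ is a graded surjection between graded vector spaces with equal Hilbert series, so $\Phi$ is an isomorphism; in particular $A$ is free as a left $C$-module on $f_1, \dots, f_k$, proving a version of (2) with $C$ in place of $B$.

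Next I would deduce that $\pi$ is injective. Let $z \in \ker\pi$ be homogeneous of degree $m$; then $z$ acts as $0$ on $A$ through $\pi$. But $1 = f_{i_0}$ for the degree-zero generator (the class of $1$ generates the degree-zero part of $A/I$), so in the free module $\bigoplus_i C(-\deg f_i)$ the element $z \cdot 1 = z$ sitting in the $f_{i_0}$-summand is mapped by $\Phi$ to $z \cdot 1_A = 0$; since $\Phi$ is injective, $z = 0$ in $C$. Therefore $\pi$ is an isomorphism, which is (1), and then the left $C$-module freeness of $A$ transports to left $B$-module freeness of rank $\dim_\k A/I$, giving (2). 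The right-module case is identical after replacing $JA$ by $AJ$ and left modules by right modules throughout — the Hilbert-series identities are two-sided and Lemma \ref{new f.g.} is stated symmetrically.

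The main obstacle I anticipate is purely formal rather than deep: one must be careful that the generating set produced by Lemma \ref{new f.g.} has exactly the right \emph{graded} multiplicities — i.e.\ the multiset of degrees $\{\deg f_i\}$ must match the degrees occurring in $A/I$ with multiplicity, not merely have the correct cardinality $\dim_\k A/I$. This is what makes the coefficientwise inequality $H_A(t) \le \bigl(\sum_i t^{\deg f_i}\bigr) H_C(t)$ collapse to an equality; if one only knew the cardinality, the inequality would not be sharp enough. The precise form of Lemma \ref{new f.g.} (lifting a \emph{homogeneous} $B/J$-basis of $A/I$ to $B$-module generators of $A$ in the same degrees) is exactly what supplies this, so the argument goes through cleanly. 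A secondary point to check is that $C$ being a domain is actually used only to know $H_C(t) = \prod_i(1 - t^{d_i})^{-1}$ is consistent with $C$ being, e.g., a skew polynomial ring on generators in degrees $d_i$; the domain hypothesis guarantees no unexpected collapse, but the Hilbert-series computation is what does the work.
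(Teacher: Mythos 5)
Your proof is correct, but it takes a genuinely different route to part (1) than the paper does. The paper first proves injectivity of $\pi$ by a Gelfand--Kirillov dimension argument: finite generation of $A$ over $B$ (from Lemma \ref{new f.g.}) gives $\GKdim B = \GKdim A = n$, and the Hilbert series hypothesis gives $\GKdim C = n$; if $\ker\pi \neq 0$, then because $C$ is a domain any nonzero kernel element is regular, so $\GKdim B = \GKdim (C/\ker\pi) \leq \GKdim C - 1 = n-1$, a contradiction. Only afterwards does it prove (2), by exactly your Hilbert-series comparison but for the multiplication map $B \tsr V \to A$. You reverse the order: you run the Hilbert-series/freeness argument over $C$ first, obtaining an isomorphism $\bigoplus_i C(-\deg f_i) \to A$, and then read off injectivity of $\pi$ from the summand corresponding to the generator $1$ (which is legitimate, since $I \subseteq A_{\geq 1}$ so the class of $1$ can always be placed in the chosen homogeneous basis of $A/I$). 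Your identification of the key bookkeeping point --- that Lemma \ref{new f.g.} lifts a homogeneous basis of $A/I$, so the multiset of generator degrees matches $H_{A/I}(t)$ exactly --- is precisely what makes both your argument and the paper's proof of (2) work. What your route buys is that it avoids the GK-dimension machinery (the citations to McConnell--Robson and Krause--Lenagan) and, as you correctly observe, never uses that $C$ is a domain, so it proves a slightly more general statement; what the paper's route buys is a short, standard argument for (1) that quotes well-known facts rather than re-deriving freeness over $C$ first.
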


\begin{proof}
First, since we are assuming that $I = JA$ and $A/I$ is a finite-dimensional vector space over $B/J\cong \k$, Lemma \ref{new f.g.} implies that $A$ is finitely generated as a left $B$-module. The assumption on the Hilbert series of $A$ implies that $\GKdim A = n$. Furthermore, since $A$ is a finitely generated $B$-module, \cite[Proposition 8.2.9]{MR} shows that $\GKdim B = n$. The assumption on the Hilbert series of $C$ also guarantees that $\GKdim C = n$. Consider the graded algebra surjection $\pi: C \to B$ and let $I = \ker \pi$. If $I \neq 0$, then, since $C$ is a domain, any nonzero element of $I$ is left regular. So \cite[Proposition 3.15]{KL} implies that $$\GKdim B = \GKdim C/I \leq \GKdim C - 1 = n-1,$$ a contradiction. Therefore $\pi$ is an isomorphism.

Now we prove (2). Since $B \cong C$, we have $H_B(t) = \prod_i (1-t^{d_i})^{-1}$. Let $r = \dim_{\k} A/I$ and let $u_1, \ldots, u_r \in A/I$ be a homogeneous $\k$-basis. Choosing homogeneous preimages $v_1, \ldots v_r \in A$, the proof of Lemma \ref{new f.g.} shows that $v_1, \ldots, v_r$ generate $A$ as a left $B$-module. Let $V$ denote the $\k$-linear space spanned by $v_1, \ldots, v_r$. Then the canonical multiplication map $B \tsr V \to A$ is surjective. Since
$$H_{B \tsr V}(t) = H_B(t)H_V(t) = H_{B}(t) H_{A/I}(t) =  \prod_i (1-t^{d_i})^{-1} \prod_i (1-t^{d_i}) H_A(t) = H_A(t),$$$A$ is a free left $B$-module with basis $\{v_1, \ldots, v_r\}$, and (2) follows.
\end{proof}

We will use the next two results to show that the algebras $R$, $S$ and $T$ are PI algebras. The following proposition is well known.

\begin{proposition}\label{normal regular sequences via Hilbert series}\cite[Lemma II.1.5]{G}
Let $A$ be a connected graded $\k$-algebra. Suppose that $y_1, \ldots, y_n$ are normal homogeneous elements of $A$, say $\deg(y_i) = d_i$. Let $A' = A/\la y_1, \ldots, y_n \ra$. Then $y_1, \ldots, y_n$ is a regular sequence if and only if $$H_{A'}(t) = \prod_{i=1}^n (1-t^{d_i}) \cdot H_A(t).$$ 
\end{proposition}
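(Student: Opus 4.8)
The statement to prove is Proposition~\ref{normal regular sequences via Hilbert series}, which is a standard fact about Hilbert series and regular sequences of normal homogeneous elements. Let me sketch a proof.

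---

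The plan is to induct on $n$, reducing to the single-element case $n=1$. For $n=1$, let $y = y_1$ be a normal homogeneous element of degree $d$, so $yA = Ay$, and consider the right multiplication map $\cdot y \colon A \to A$, which is graded of degree $d$. Its image is the two-sided ideal $yA = Ay$, and its kernel is the right annihilator $\operatorname{r.ann}(y)$, which—by normality—coincides with the left annihilator; call it $K$. This gives a graded exact sequence $0 \to K \to A(-d) \xrightarrow{\cdot y} A \to A' \to 0$, where $A' = A/yA$. Taking Hilbert series along this exact sequence yields
\[
H_{A'}(t) = H_A(t) - t^d H_A(t) + t^d H_K(t) = (1-t^d)H_A(t) + t^d H_K(t).
\]
Since all Hilbert series have nonnegative coefficients and $t^d H_K(t)$ has nonnegative coefficients, we get $H_{A'}(t) = (1-t^d)H_A(t)$ if and only if $H_K(t) = 0$, i.e.\ $K = 0$, i.e.\ $y$ is a nonzerodivisor (regular element). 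This settles $n = 1$.

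For the inductive step, suppose the claim holds for sequences of length $n-1$. Set $A'' = A/\langle y_1,\dots,y_{n-1}\rangle$. The images $\bar y_n$ of $y_n$ in $A''$ is normal (the image of a normal element under a quotient map is normal), and $A' = A''/\bar y_n A''$. By the $n=1$ case applied to $A''$ and $\bar y_n$, the element $\bar y_n$ is regular in $A''$ if and only if $H_{A'}(t) = (1-t^{d_n}) H_{A''}(t)$. Now $y_1,\dots,y_n$ is a regular sequence if and only if $y_1,\dots,y_{n-1}$ is a regular sequence \emph{and} $\bar y_n$ is regular in $A''$; by the induction hypothesis the former is equivalent to $H_{A''}(t) = \prod_{i=1}^{n-1}(1-t^{d_i}) H_A(t)$. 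Combining, if $y_1,\dots,y_n$ is a regular sequence then chaining the two Hilbert series identities gives $H_{A'}(t) = \prod_{i=1}^{n}(1-t^{d_i}) H_A(t)$. Conversely, suppose $H_{A'}(t) = \prod_{i=1}^n(1-t^{d_i})H_A(t)$; one must deduce regularity. Here one argues that, regardless of whether $y_1, \dots, y_{n-1}$ is regular, the $n=1$ estimate applied repeatedly gives, with $A_0 = A$ and $A_j = A_{j-1}/\bar y_j A_{j-1}$,
\[
H_{A_j}(t) = (1-t^{d_j})H_{A_{j-1}}(t) + t^{d_j} H_{K_j}(t),
\]
where $K_j \geq 0$ coefficientwise; telescoping shows $H_{A_n}(t) - \prod_{i=1}^n(1-t^{d_i})H_A(t)$ is a sum of terms with nonnegative coefficients, hence vanishes iff every $H_{K_j} = 0$, iff every $\bar y_j$ is regular in $A_{j-1}$, which is exactly the assertion that $y_1, \dots, y_n$ is a regular sequence.

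The only slightly delicate point is the bookkeeping in the converse direction of the inductive step: one has to be careful that the "error terms" $t^{d_j}H_{K_j}(t)$ accumulate with nonnegative coefficients and cannot cancel, so that the global Hilbert series identity forces each to vanish. Since each factor $\prod_{i<j}(1-t^{d_i})$ multiplying an error term could in principle have negative coefficients, the cleanest route is to avoid expanding all the way and instead run the induction as stated: first establish $n=1$ cleanly, then note that the equivalence "$y_1,\dots,y_n$ regular sequence $\iff$ [$y_1,\dots,y_{n-1}$ regular sequence and $\bar y_n$ regular in $A''$]" is the definition, and apply the induction hypothesis to $A$ with sequence $y_1,\dots,y_{n-1}$ together with the $n=1$ case to $A''$ with element $\bar y_n$—this handles both directions without any sign issues, since each individual equivalence is an honest if-and-only-if. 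This is a known result; we cite \cite[Lemma II.1.5]{G} and include this argument only for completeness.
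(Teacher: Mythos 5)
Your base case and the forward direction of the induction are essentially fine, but the converse direction of your inductive step has a genuine gap, and the fix you settle on does not repair it. To run the chain of equivalences you describe, you would need the two \emph{intermediate} identities $H_{A''}(t)=\prod_{i<n}(1-t^{d_i})H_A(t)$ and $H_{A'}(t)=(1-t^{d_n})H_{A''}(t)$; but the hypothesis in the converse is only the composite identity $H_{A'}(t)=\prod_{i=1}^{n}(1-t^{d_i})H_A(t)$, and neither intermediate identity follows from it unless you already know $y_1,\dots,y_{n-1}$ is a regular sequence --- which is part of what you are trying to prove. So ``apply the induction hypothesis together with the $n=1$ case'' is circular in that direction, and the telescoping alternative fails for exactly the sign reason you yourself point out. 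The standard repair is to divide \emph{before} telescoping: with $B_j=A/\la y_1,\dots,y_j\ra$ and the exact sequences $0\to K_j\to B_{j-1}(-d_j)\to B_{j-1}\to B_j\to 0$ one has $H_{B_j}=(1-t^{d_j})H_{B_{j-1}}+t^{d_j}H_{K_j}$; dividing by $P_j:=\prod_{i\le j}(1-t^{d_i})$, whose inverse has nonnegative coefficients (here one uses $d_i>0$), and summing over $j$ gives
$$\frac{H_{A'}(t)}{\prod_{i=1}^{n}(1-t^{d_i})}-H_A(t)\;=\;\sum_{j=1}^{n}\frac{t^{d_j}H_{K_j}(t)}{\prod_{i\le j}(1-t^{d_i})},$$
a sum of coefficientwise nonnegative series, each vanishing iff $H_{K_j}=0$ since $1/P_j$ is a unit power series. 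The hypothesized product formula makes the left side zero, forcing every $K_j=0$ and hence regularity of the sequence. This ``divide by the product, then telescope'' step is the missing idea; without it (or an equivalent strengthening of the induction hypothesis to an inequality with equality iff regular), the converse is unproved.

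A secondary point: the kernel of right multiplication by a normal homogeneous $y$ is $\{x: xy=0\}$, and it is not true in a general ring that this coincides with $\{x: yx=0\}$; what you need, and what is true here, is that one vanishes iff the other does. If $xy=0\Rightarrow x=0$, then $ya=\sigma(a)y$ well-defines a graded algebra endomorphism $\sigma$ of $A$ that is surjective, hence bijective on each finite-dimensional component $A_i$ (connectedness and local finiteness are used here), and $\{a: ya=0\}=\ker\sigma=0$; the same remark must be applied in each quotient $B_{j-1}$ so that ``regular'' really means two-sided non-zero-divisor. Finally, note the paper itself offers no proof of this proposition --- it is quoted from the literature --- so the comparison here is only with the standard argument just sketched.
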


 The following result follows easily from Theorem \ref{free modules} and Proposition \ref{normal regular sequences via Hilbert series}.


\begin{corollary}\label{central regular sequence implies module-finite}
 Let $A$ be a finitely generated $\NN$-graded $\k$-algebra such that $H_A(t) = p(t)/(1-t)^n$ for some $p(t) \in \k[t]$, $p(1) \neq 0$ and $n \in \NN$. Let $Z(A)$ denote the center of $A$. Suppose that $c_1, \ldots, c_n \in Z(A)$ is a regular sequence of homogenous elements of positive degree. Let $A' = A/\la c_1, \ldots, c_n \ra$ and let $C = \k[c_1, \ldots, c_n]$. Then:
 \begin{itemize}
 \item[(1)] $A$ is module-finite over $Z(A)$;
 \item[(2)] $C$ is a polynomial ring, i.e., the $c_i$ are algebraically independent over $\k$;
 \item[(3)] $A'$ is finite-dimensional over $\k$;
 \item[(4)] $A$ is a free $C$-module of rank equal to $\dim_{\k} A'$.
 \end{itemize}
 \end{corollary}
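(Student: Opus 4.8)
The plan is to obtain all four statements directly from Theorem~\ref{free modules}, using Proposition~\ref{normal regular sequences via Hilbert series} to supply its Hilbert-series hypothesis. Write $d_i=\deg c_i>0$ and let $B=\k[c_1,\dots,c_n]\subseteq A$ be the $\k$-subalgebra generated by $Y:=\{c_1,\dots,c_n\}$; since the $c_i$ are central, $B$ is commutative and $B\subseteq Z(A)$. Let $I$ be the two-sided ideal of $A$ generated by $Y$ (so $A'=A/I$) and $J$ the two-sided ideal of $B$ generated by $Y$. The algebra $C$ appearing in the hypotheses of Theorem~\ref{free modules} should be taken to be a \emph{separate} graded polynomial ring $P:=\k[z_1,\dots,z_n]$ with $\deg z_i=d_i$ — a domain with $H_P(t)=\prod_i(1-t^{d_i})^{-1}$ — equipped with the graded algebra surjection $\pi\colon P\to B$ sending $z_i\mapsto c_i$; note that the corollary's own $C=\k[c_1,\dots,c_n]$ is exactly $B$, not $P$.

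First, since central elements are normal, Proposition~\ref{normal regular sequences via Hilbert series} applies to the regular sequence $c_1,\dots,c_n$ and gives $H_{A'}(t)=\prod_{i=1}^n(1-t^{d_i})\,H_A(t)=p(t)\prod_{i=1}^n(1+t+\cdots+t^{d_i-1})$, a polynomial; hence $\dim_{\k}A'<\infty$, which is (3), and this is exactly the Hilbert-series hypothesis $H_{A/I}(t)=\prod_i(1-t^{d_i})H_A(t)$ needed in Theorem~\ref{free modules}. The remaining hypotheses are routine bookkeeping: centrality of the $c_i$ gives $I=\sum_i c_iA=JA=AJ$ (using $1\in B$ to get $BA=A=AB$); $B$ is generated in positive degree, so $J=B_{\ge 1}$ and $B/J\cong\k$; and $A/I=A'$ is a finite-dimensional vector space over $B/J\cong\k$ by (3). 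Theorem~\ref{free modules} then yields that $\pi\colon P\to B$ is an isomorphism — so the $c_i$ are algebraically independent and $C=\k[c_1,\dots,c_n]$ is a polynomial ring, which is (2) — and that $A$ is a free left $B$-module of rank $\dim_{\k}(A/I)=\dim_{\k}A'$.

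To conclude: because $B=C\subseteq Z(A)$, the left and right $B$-module structures on $A$ agree, so $A$ is a free $C$-module of rank $\dim_{\k}A'$, which is (4); and as $A$ is thereby module-finite over $C$ (the rank being finite by (3)) with $C\subseteq Z(A)\subseteq A$, $A$ is a fortiori module-finite over $Z(A)$, which is (1). I do not anticipate any genuine obstacle: all of the substantive content is packaged inside Theorem~\ref{free modules} (notably the Gelfand--Kirillov dimension argument that forces $\pi$ to be injective), and the only points requiring care are checking that the $c_i$ really form a regular sequence of \emph{normal} elements so that Proposition~\ref{normal regular sequences via Hilbert series} applies, and not conflating the two polynomial rings $B$ and $P$. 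The degenerate case $n=0$ is immediate: there are no $c_i$, so $H_A=p(t)$, $A=A'$ is finite-dimensional, $C=\k$, and $A$ is trivially free over $\k$ of rank $\dim_{\k}A'$.
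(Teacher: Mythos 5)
Your proof is correct and follows exactly the route the paper intends: the paper gives no separate argument, simply asserting that the corollary ``follows easily from Theorem \ref{free modules} and Proposition \ref{normal regular sequences via Hilbert series},'' which is precisely the derivation you carried out. Your care in distinguishing the abstract polynomial ring $P$ (the ``$C$'' of Theorem \ref{free modules}) from the subalgebra $C=\k[c_1,\dots,c_n]\subseteq A$, and your handling of the $n=0$ case, are exactly the right bookkeeping.
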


\section{The Groups \texorpdfstring{$M_{16}$}{M16} and \texorpdfstring{$SD_{16}$}{SD16} are Dual Reflection Groups}
\label{sect-groups-dual-ref}


\subsection{The modular group \texorpdfstring{$M_{16}$}{M16} is a dual reflection group}

Recall that  $$R = \frac{\k \la x_1, x_2, x_3, x_4 \ra}{\la x_1^2-x_4^2, x_2^2-x_3^2, x_4x_1-x_2x_3, x_1x_4-x_3x_2, x_1x_3-x_2x_4,x_3x_1-x_4x_2 \ra}.$$

We first discuss properties of the algebra $R$.
\begin{proposition}
\label{r-props}
The algebra $R$ is a double Ore extension of an AS regular algebra of dimension two, and consequently $R$ is AS regular of dimension four, strongly noetherian, Auslander regular, Cohen-Macaulay, Koszul, and a domain. 
\end{proposition}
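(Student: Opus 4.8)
The plan is to exhibit $R$ explicitly as an iterated double Ore extension $R \cong B[y_1, y_2; \sigma, \delta, \tau]$ in the sense of Zhang--Zhang, where $B$ is an AS regular algebra of global dimension two on two generators, and then invoke the structural results of \cite{ZZ1, ZZ2} to transfer AS regularity and the listed ring-theoretic properties. The first step is to reorganize the six binomial relations of $R$ to isolate a subalgebra. Looking at the relations $x_1x_3 - x_2x_4$, $x_3x_1 - x_4x_2$, $x_4x_1 - x_2x_3$, $x_1x_4 - x_3x_2$, together with $x_1^2 - x_4^2$ and $x_2^2 - x_3^2$, I would try $B = \k\langle x_1, x_4\rangle$ modulo the single induced quadratic relation (a deformation of the commutative polynomial ring in two variables, hence AS regular of dimension two by the Artin--Schelter / Zhang classification), and treat $x_2, x_3$ as the two new adjoined variables. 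The relations then need to be rearranged into the double Ore extension form: each product $x_i \cdot x_j$ with $i \in \{2,3\}$ should be expressible as an element of $B x_2 + B x_3 + B$ (with the $\{x_2,x_3\}$-quadratic part governed by the matrix $\tau$), and one must record the maps $\sigma: B \to M_2(B)$ and $\delta: B \to B^{\oplus 2}$.

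The key verification, after positing the presentation, is that the data $(\sigma, \delta, \tau)$ satisfy the compatibility axioms of \cite[Definition 1.3]{ZZ1} — associativity constraints encoded in a handful of relations among $\sigma$, $\delta$, $\tau$ — and that $\tau$ is of the required invertible "trimmed" form so that the double extension is *connected graded* and the Hilbert series multiplies correctly, namely $H_R(t) = H_B(t)/(1-t)^2 = 1/(1-t)^4$. Once $R$ is identified as a double Ore extension of an AS regular algebra of dimension two, \cite[Theorem 0.2]{ZZ1} (or the relevant theorem in \cite{ZZ2}) gives that $R$ is AS regular of global dimension four, strongly noetherian, Auslander regular, and Cohen-Macaulay; Koszulness follows since $R$ is quadratic with the correct Hilbert series and a double Ore extension of a Koszul algebra is Koszul. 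That $R$ is a domain is part of the double Ore extension package as well (the associated graded with respect to the extension filtration is a skew polynomial ring over $B$, which is a domain, and the filtration is exhaustive).

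The main obstacle I anticipate is purely computational bookkeeping: finding the correct change of variables and the correct assignment of which two generators form $B$ so that all six relations fall into the rigid shape demanded by the definition of a double Ore extension, and then checking the (somewhat numerous) compatibility identities for $(\sigma,\delta,\tau)$. In particular $\sigma$ must be an algebra homomorphism $B \to M_2(B)$, which forces the images of the single defining relation of $B$ to be respected — this is the step most likely to require care, and it is where a wrong guess for $B$ would first manifest. A secondary point is to confirm that $B$ is genuinely AS regular of dimension two and not, say, a non-domain or a ring of the wrong Hilbert series; this is settled by matching its one quadratic relation against the short list of AS regular algebras on two generators. Everything downstream (strong noetherianity, Auslander-regularity, Cohen-Macaulayness, domain, Koszul) is then a black-box citation, so no further work is needed there.
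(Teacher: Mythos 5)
Your overall strategy (realize $R$ as a double Ore extension of an AS regular algebra of dimension two and quote \cite{ZZ1,ZZ2}) is the paper's strategy, but your concrete choice of data has a genuine gap: the partition of the \emph{original} generators into $B=\k\la x_1,x_4\ra/(x_1^2-x_4^2)$ with $x_2,x_3$ adjoined cannot satisfy the axioms of \cite[Definition 1.3]{ZZ1}. In a double Ore extension the two adjoined variables must satisfy exactly one relation, of the form $y_2y_1=p_{12}y_1y_2+p_{11}y_1^2+\tau_1y_1+\tau_2y_2+\tau_0$ with $\tau_i\in B$, and $R$ must be a free left $B$-module on the monomials $y_1^{n_1}y_2^{n_2}$. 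With your choice, the defining relations give \emph{three} independent quadratic relations among the adjoined variables with values in $B$, namely $x_2^2=x_3^2$, $x_2x_3=x_4x_1$, and $x_3x_2=x_1x_4$; in particular $x_2x_3\in B$, so the monomials $x_2^{n_1}x_3^{n_2}$ are not $B$-linearly independent and the Hilbert series cannot be $H_B(t)(1-t)^{-2}$. Moreover the overlap condition fails as well: the monomials $x_2x_1$ and $x_3x_4$ occur in none of the six relations, so they cannot be rewritten into $Bx_2+Bx_3+B$. A quick check shows that no partition of $\{x_1,x_2,x_3,x_4\}$ into a pair generating $B$ and a pair of adjoined variables can work, so the failure is not just a matter of ``computational bookkeeping'' with $(\sigma,\delta,\tau)$ — the missing idea is that one must first perform a linear change of variables mixing all four generators.

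That change of variables is the actual content of the paper's proof: setting $Y_1=x_1+x_2+x_3+x_4$, $Y_2=-x_1+x_2-x_3+x_4$, $Z_1=-x_1+x_2+x_3-x_4$, $Z_2=-x_1-x_2+x_3+x_4$, the relations become $Y_2Y_1=-Y_1Y_2$, $Z_2Z_1=-Z_1Z_2$, and four relations expressing each $Z_iY_j$ in $BZ_1+BZ_2$, where $B=\k_{-1}[Y_1,Y_2]$. In this form the single relation among $Z_1,Z_2$ has the required shape, $R$ is free over $B$ on the $Z_1^{n_1}Z_2^{n_2}$, and conditions (1)--(4) of \cite[Definition 1.3]{ZZ1} are immediate, after which AS regularity follows from \cite[Theorem 0.2]{ZZ1} and the remaining properties (strongly noetherian, Auslander regular, Cohen--Macaulay, Koszul, domain) from \cite[Theorem 0.1]{ZZ2}. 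As a minor point, your claim that Koszulness follows from quadraticity plus the correct Hilbert series is not valid in general; it is part of the cited double Ore extension package.
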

\begin{proof} Consider the following linear change of variables:
$$
\begin{array}{ll}
Y_1 = x_1+x_2+x_3+x_4, &
Y_2 = -x_1+x_2-x_3+x_4, \\ 
Z_1 = -x_1+x_2+x_3-x_4, &
Z_2 = -x_1-x_2+x_3+x_4.
\end{array}
$$
One checks that the following are the relations of $A$ in terms of $Y_1, Y_2, Z_1, Z_2$:
$$\begin{array}{lll}
Y_2 Y_1 = -Y_1 Y_2, & 
Z_2 Z_1 = - Z_1 Z_2, &
Y_1Z_1 = -Z_2Y_2, \\
Y_2Z_1 = Z_2Y_1, &
Y_1 Z_2 = Z_1 Y_2, &
Y_2Z_2 = -Z_1Y_1.
\end{array}$$
Using \cite[Definition 1.3]{ZZ1} 
we check that $R$ is a right double Ore extension of $B$, where $B$ is the $\k$-subalgebra generated by $Y_1$ and $Y_2$.  Indeed, 
(1) $R$ is generated by $B$ and $Z_1, Z_2$,
(2) $Z_1$ and $Z_2$ satisfy the relation $Z_2Z_2 = - Z_1 Z_2$,
(3)  As a left $B$-module $R = \sum B Z_1^{n_1}Z_2^{n_2}$, and it is a free $B$-module with basis $\{ {Z_1}^{n_1}{Z_2}^{n_2}\; | \;\ n_1 \geq 0, n_2 \geq 0 \},$ and
(4)  $Z_1B + Z_2 B \subset BZ_1 + B Z_2$.

Similarly $R$ is also a left double Ore extension of $B$. Since $B \cong \k_{-1}[Y_1,Y_2]$ is AS regular, $R$ is AS regular of global dimension four by \cite[Theorem 0.2]{ZZ1}.   The other properties hold by \cite[Theorem 0.1]{ZZ2}.
\end{proof}

We now check that via $R$, the group $M_{16}$ is a dual reflection group.

\begin{proposition}
\label{invar-r}
The invariant subalgebra $R_e$ is the subalgebra of $R$ generated by the set $Y=\{x_1x_2, x_2x_1, x_3x_4, x_4x_3\}$, and is isomorphic to $\mathbb{K}[z_1,\dots, z_4]$, the polynomial ring in four variables.    
\end{proposition}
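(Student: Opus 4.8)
## Proof proposal

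The plan is to identify $R_e$ explicitly as a subalgebra of $R$ and then apply Corollary~\ref{central regular sequence implies module-finite} (or Theorem~\ref{free modules}) to show it is a polynomial ring in four variables. First I would verify the candidate generating set $Y = \{x_1x_2, x_2x_1, x_3x_4, x_4x_3\}$ actually lies in the identity component: using the $M_{16}$-grading of $R$, one computes $\deg_G(x_1x_2) = a \cdot acd = a^2cd = c\cdot cd = d^2 = e$ (recalling $c = a^2$, $d = c^2$, and $c,d$ central with $d^2 = e$), and similarly for the other three products, so each generator of $Y$ is indeed a homogeneous element of degree $2$ sitting in $R_e$. The reverse containment — that $R_e$ is \emph{generated} by $Y$ — is the substantive point, and I would obtain it by the module-theoretic machinery rather than a direct combinatorial argument on monomials.

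The key intermediate step is to show that $R$ is a free module over the subalgebra $B' := \k\langle Y \rangle$ and to compute the Hilbert series. Here I would invoke Theorem~\ref{free modules} with $n = 4$ and all $d_i = 2$: one needs $I = B'_{\geq 1}R$ (or its right analogue) to be a two-sided ideal with $R/I$ finite-dimensional and $H_{R/I}(t) = (1-t^2)^4 H_R(t) = (1-t^2)^4/(1-t)^4 = (1+t)^4$, and one needs a graded surjection from a domain $C$ with $H_C(t) = (1-t^2)^{-4}$ onto $B'$. The natural choice is $C = \k[z_1,z_2,z_3,z_4]$ with $\deg z_i = 2$, with the surjection sending $z_i$ to the four elements of $Y$; that this is well-defined requires checking the elements of $Y$ commute with each other in $R$, which I would verify directly from the relations (this is a short computation, e.g. $x_1x_2 \cdot x_2 x_1$ versus $x_2 x_1 \cdot x_1 x_2$ using $x_1^2 = x_4^2$, $x_2^2 = x_3^2$, etc., together with the degree-four relations). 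Then Theorem~\ref{free modules}(1) forces $C \cong B'$, so $B'$ is a polynomial ring in four variables, and part (2) gives that $R$ is free over $B'$ of rank $\dim_\k R/I = \fp_\Re(1) = (1+1)^4 = 16$.

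Finally, to conclude $B' = R_e$ rather than merely $B' \subseteq R_e$, I would use a Hilbert-series count against the $G$-grading. Since $R$ is free over $B'$ of rank $16$ with $H_{R}(t) = H_{B'}(t)\cdot(1+t)^4$, and the $16$ coset representatives can be chosen to be homogeneous with $G$-degrees realizing each element of $M_{16}$ exactly once (matching the Poincaré polynomial $\fp_\Re(t) = (1+t)^4$ from Theorem~\ref{necessary}(3) applied to the covariant ring $R/I = R^{\cov H}$), the identity-component part of this decomposition is exactly the single coset representative of degree $e$ times $B'$; hence $R_e = B' \cdot 1 = B'$. Alternatively, and more cleanly, since $R$ is a domain that is module-finite over the central polynomial subring $B'$ (after checking the $Y_i$ are central — here I would note the commutation of $Y$ with the generators $x_j$, which also follows from the relations), the general theory of dual reflection groups in Theorem~\ref{necessary} pins down $H_{R_e}(t) = H_R(t)/\fp_\Re(t) = 1/(1-t^2)^4 = H_{B'}(t)$, and combined with $B' \subseteq R_e$ this forces equality.

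The main obstacle I anticipate is the bookkeeping in the two direct computations: (a) verifying that the four elements of $Y$ pairwise commute in $R$ (needed to build the surjection $C \to B'$), and (b) establishing $I = B'_{\geq 1}R$ as a two-sided ideal with the correct quotient Hilbert series — equivalently, exhibiting enough relations to see that $R/B'_{\geq 1}R$ has dimension $16$ with Hilbert series $(1+t)^4$. Computation (b) is really the crux: rather than computing a Gröbner basis by hand, I would argue that $R/B'_{\ge1}R$ is a homomorphic image of $R^{\cov H} = R/I$ (whose Hilbert series is $(1+t)^4$ by Theorem~\ref{necessary}(1),(3)), and that the reverse inequality of Hilbert series holds because $R$ is generated over $B'$ by elements of degrees matching $\fp_\Re$; the two bounds then coincide. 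Everything else is a direct application of the already-established Theorem~\ref{free modules} and Proposition~\ref{r-props}.
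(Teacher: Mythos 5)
Your overall skeleton is the paper's: apply Theorem~\ref{free modules} with $C=\k[z_1,\dots,z_4]$, check that the elements of $Y$ pairwise commute to get the surjection $\pi\colon C\to B'$, check that $I=B'_{\geq 1}R$ is two-sided via identities of the form $x_iy_j=y_kx_i$, and finish by a $G$-degree argument on a free $B'$-basis of $R$. The genuine gap is precisely at the step you yourself flag as the crux, the computation $H_{R/I}(t)=(1+t)^4$. You propose to avoid the direct computation by citing Theorem~\ref{necessary}(1),(3), but the hypothesis of Theorem~\ref{necessary} is that $G$ already coacts on $R$ \emph{as a dual reflection group}, i.e.\ that $R_e$ is AS regular --- which is exactly what Proposition~\ref{invar-r} is meant to establish, so the appeal is circular. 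The same circularity infects your ``cleaner'' alternative ending ($H_{R_e}(t)=H_R(t)/\fp_{\Re}(t)$ via Theorem~\ref{necessary}) and your first ending as well, since both also silently identify $R/B'_{\geq 1}R$ with $R^{\cov\,H}=R/R(R_e)_{\geq 1}$; that identification presupposes that $B'_{\geq 1}$ and $(R_e)_{\geq 1}$ generate the same ideal, which is essentially the containment $R_e\subseteq B'$ being proven. Your proposed ``reverse inequality'' (that $R$ is generated over $B'$ by elements with degrees matching $\fp_\Re$) likewise has no independent source: by Lemma~\ref{new f.g.} it is obtained from a spanning set of $R/I$, i.e.\ from the very Hilbert-series computation you are trying to bypass. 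The paper does this step head-on: a diamond-lemma computation exhibits an explicit $16$-element homogeneous basis of $R/I$, giving $H_{R/I}(t)=(1+t)^4$, and the identities $x_iy_j=y_kx_i$ give $I=JR=RJ$; some such direct verification is unavoidable here.

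A second, smaller error: the elements of $Y$ are \emph{not} central in $R$. From the defining relations $x_1^2=x_4^2$ and $x_4x_2=x_3x_1$ one gets $x_1(x_1x_2)=x_4^2x_2=x_4x_3x_1=(x_4x_3)x_1$; since $x_1x_2\neq x_4x_3$ in $R_2$ (no defining relation involves these monomials) and $R$ is a domain, $x_1x_2$ does not commute with $x_1$. The $y_i$ pairwise commute and form a normalizing family permuted by the degree-one generators --- which is all that is needed for $I=JR=RJ$ --- but your parenthetical appeal to ``the central polynomial subring $B'$'' would fail. Finally, the concluding identification $R_e=B'$ should be run as in the paper: once the explicit monomial basis of $R/I$ is in hand, its non-unit elements are visibly of nontrivial $M_{16}$-degree, so the degree-$e$ part of the free decomposition of $R$ over $B'$ is $B'\cdot 1$; your version of this step (coset representatives realizing each group element exactly once, matched against $\fp_\Re$) again needs the explicit basis you proposed to avoid computing.
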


\begin{proof}
    The proof will be an application of \Cref{free modules}. Set the following notation to match that of the theorem: let $B$ be the subalgebra of $R$ generated by $Y$, let $I$ be the ideal of $R$ generated by $Y$, let $J$ be the ideal of $B$ generated by $Y$, and let $C$ be the polynomial ring $\k[z_1,\dots, z_4]$ in four variables. Denote the elements of $Y$ by $y_1,\dots, y_4$. It can be checked using the relations of $R$ that the elements of $Y$ commute, so that we have a ring homomorphism $\pi: C \to B$ given by
    $\pi(z_i) = y_i$.

     We can additionally check that $I=JR = RJ$ by showing that for each $i,j=1, \dots, 4,$ we have $x_i y_j = y_k x_i$ for some $k=1,\dots, 4$. A computation using the diamond lemma shows that the images of the following elements constitute a $\mathbb{K}$-basis for $R/I$: 
    $$\{
    1,
    x_1, x_2, x_3, x_4,
    x_2x_4,x_3x_2,x_3^2,x_4x_1,x_4x_2,x_4^2,
    x_3^2,x_4x_2x_4,x_4^2x_1,x_4^3,
    x_4^4
    \}.
    $$
    That is, the Hilbert series of $R/I$ is $(1+t)^4$. Since the Hilbert series of $R$ is $(1-t)^{-4}$ by \Cref{r-props}, we have $H_{R/I}(t)=(1-t^2)^4 H_{R}(t),$ and hence all assumptions of \Cref{free modules} are satisfied. It follows that $\pi: C \to B$ is an isomorphism, that $R$ is a free $B$-module of rank 16, and the elements in (1)-(5) above constitute a $B$-basis for $R$. It is clear that $B \subseteq R_e$; to complete the proof, just note that all non-unital elements of the $B$-basis of $R$ given above are in nontrivial $M_{16}$-degree, and hence $R_e = B . 1 =B.$
\end{proof}
 The presentation of $R$ as a double Ore extension, given in Proposition \ref{r-props}, is more suited to proving the algebraic and geometric properties of $R$.  Hence we will use 
the presentation in terms of generators $Y_1, Y_2, Z_1, Z_2$ that is given in the proof of Proposition \ref{r-props}.  We note that this presentation is no longer graded homogeneously by $M_{16}$ in these generators. Henceforth, we will present $R$ as:
$$\frac{\k \la x_1, x_2, x_3, x_4 \ra}{\la x_1x_2+x_2x_1, x_1x_3+x_4x_2, x_1x_4-x_3x_2, x_2x_3-x_4x_1, x_2x_4+x_3x_1, x_3x_4+x_4x_3\ra}.$$


\subsection{The semidihedral group of order 16 is a dual reflection group}

Recall the definition of the algebras $S$ and $T$:
\begin{center}
$S = \dfrac{\k \la x_1, x_2, x_3, x_4 \ra}{\la x_1x_2-x_3^2, x_4^2-x_2x_1, x_1x_3-x_2x_4, x_4x_1-x_3x_2, x_2x_3-x_3x_1, x_4x_2-x_1x_4 \ra},$ \\ \phantom{S} \\
$T = \dfrac{\k \la x_1, x_2, x_3, x_4 \ra}{\la x_1x_2-x_3^2, x_4^2
+x_2x_1, x_1x_3-x_2x_4, x_4x_1-x_3x_2, x_2x_3-x_3x_1, x_4x_2+x_1x_4 \ra}$.
\end{center}

We begin by discussing some of the properties of 
the algebra $S$. We first prove that $S$ is an 
Artin-Schelter regular algebra of dimension four.
The proof in the case of $T$ is similar and left to the
reader.


\begin{lemma} \label{G-basis} The following elements constitute a Gr\"obner basis for the defining ideal of $S$: $$x_1x_2-x_3^2,\; x_4^2-x_2x_1,\; x_1x_3-x_2x_4,\; x_4x_1-x_3x_2,\; x_2x_3-x_3x_1,\; x_4x_2-x_1x_4,$$ $$ x_4x_3^2-x_3x_2^2, \;x_4x_3x_2-x_2x_1^2,\; x_4x_3x_1-x_1x_4x_3.$$
\end{lemma}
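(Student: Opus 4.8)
The plan is to verify directly that the nine listed elements form a Gröbner basis for the defining ideal of $S$ with respect to a suitable monomial order, using Bergman's Diamond Lemma (equivalently, Buchberger's criterion for noncommutative Gröbner bases). First I would fix the degree-lexicographic order on monomials in $\k\la x_1,x_2,x_3,x_4\ra$ induced by $x_1 < x_2 < x_3 < x_4$ (the order under which the given six defining relations have leading terms $x_1x_2,\; x_4^2,\; x_1x_3,\; x_4x_1,\; x_2x_3,\; x_4x_2$, and the three extra elements have leading terms $x_4x_3^2,\; x_4x_3x_2,\; x_4x_3x_1$). I would record the rewriting rules: $x_1x_2 \to x_3^2$, $x_4^2 \to x_2x_1$, $x_1x_3 \to x_2x_4$, $x_4x_1 \to x_3x_2$, $x_2x_3 \to x_3x_1$, $x_4x_2 \to x_1x_4$, together with $x_4x_3^2 \to x_3x_2^2$, $x_4x_3x_2 \to x_2x_1^2$, $x_4x_3x_1 \to x_1x_4x_3$.

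Next I would explain where the three extra elements come from: the six original relations alone are \emph{not} a Gröbner basis, because their leading terms produce overlap ambiguities (for instance $x_4x_1x_3$, from the overlap of $x_4x_1$ and $x_1x_3$, and $x_4x_4x_1$, from $x_4^2$ and $x_4x_1$, etc.) that do not resolve. Running the Buchberger/Diamond-Lemma completion on these six rules for a couple of rounds produces exactly the three new leading monomials $x_4x_3^2$, $x_4x_3x_2$, $x_4x_3x_1$ — all of the form $x_4 x_3 (\text{generator})$ — and the claim is that after adjoining the three displayed elements the system is complete. So the body of the proof is: (i) list every overlap ambiguity among the nine leading monomials, and (ii) check that each resolves, i.e.\ the two ways of reducing the overlap word lead to a common normal form. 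The overlaps to inspect are the left/right overlaps of each leading monomial with each other; since the leading monomials are short (degree $2$ or $3$) there are finitely many, and most overlaps are not even "proper" (no genuine overlap word), so the list of ambiguities to resolve is modest — on the order of a dozen or two genuine ones, the key families being $x_4x_3^2 \cdot x_2$ vs $x_4 \cdot x_3^2x_2$-type words in degree $4$ and $5$ built from the $x_4x_3\{x_1,x_2,x_3\}$ rules overlapping the quadratic rules.

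The main obstacle I expect is purely the bookkeeping in step (ii): confirming that \emph{all} the degree $\le 5$ (and a finite check shows nothing new appears beyond that) overlap ambiguities are resolvable requires carefully chasing each overlap word down to its normal form along two different reduction paths and seeing they agree. This is routine but error-prone, so in practice I would note that the computation was carried out in a computer algebra system (the paper already invokes "the diamond lemma" and Magma computations elsewhere) and present only a representative sample of resolved ambiguities in the write-up. As an independent sanity check supporting the conclusion, I would compute the Hilbert series of the associated monomial algebra $\k\la x_1,x_2,x_3,x_4\ra / (\text{the nine leading monomials})$ by counting normal words — a normal word is one avoiding $x_1x_2, x_4^2, x_1x_3, x_4x_1, x_2x_3, x_4x_2, x_4x_3^2, x_4x_3x_2, x_4x_3x_1$ as factors — and verify it equals $(1-t)^{-4}$, matching the Hilbert series forced on $S$ by the (independently established, or simultaneously established) AS regularity; agreement of Hilbert series confirms that no further elements need to be added to the Gröbner basis, since any missing element would strictly shrink the space of normal words. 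Finally, I would remark that the identical argument works verbatim for $T$ after flipping the two signs, since sign changes do not affect leading terms or the combinatorics of the overlaps, which is why that case is left to the reader.
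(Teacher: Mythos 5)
There is a concrete error at the very first step: the monomial order you specify does not produce the leading terms you claim, and the whole Diamond Lemma verification is keyed to those leading terms. Under degree-left-lexicographic order induced by $x_1 < x_2 < x_3 < x_4$ (so $x_4$ largest, $x_1$ smallest), the leading term of $x_1x_2 - x_3^2$ is $x_3^2$ (since $x_3 > x_1$), the leading term of $x_1x_3 - x_2x_4$ is $x_2x_4$, and the leading term of $x_2x_3 - x_3x_1$ is $x_3x_1$ --- not $x_1x_2$, $x_1x_3$, $x_2x_3$ as you assert. Consequently the rewriting rules you record (e.g.\ $x_1x_2 \to x_3^2$, $x_1x_3 \to x_2x_4$, $x_2x_3 \to x_3x_1$) are not reductions with respect to your order (they replace smaller monomials by larger ones), so Bergman's Diamond Lemma cannot be applied to that system, the completion you describe would not produce the three cubics $x_4x_3^2 - x_3x_2^2$, $x_4x_3x_2 - x_2x_1^2$, $x_4x_3x_1 - x_1x_4x_3$ with the stated leading terms, and the normal-word count at the end is being done against an ideal of leading terms that is not the leading-term ideal for your order. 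The order that actually yields the leading terms $x_1x_2,\, x_4^2,\, x_1x_3,\, x_4x_1,\, x_2x_3,\, x_4x_2,\, x_4x_3^2,\, x_4x_3x_2,\, x_4x_3x_1$ is $x_3 < x_2 < x_1 < x_4$ with left-lexicographic (degree) order, which is exactly the order the paper uses; this is the same order used later for the normal form $x_3^i x_2^j x_1^k (x_4x_3)^l x_4^m$, so consistency with Lemma \ref{monomial basis} also forces it.

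Once the order is corrected, your plan --- list the overlap ambiguities among the nine leading words, resolve each to a common normal form, and note that the computation is finite and mechanical --- is precisely the paper's proof (the paper states the order and says the result follows from the Diamond Lemma by straightforward calculation). Two smaller cautions: the completion step (that exactly these three cubics arise and nothing further) is part of what must be checked, not something to be asserted from ``a couple of rounds''; and your Hilbert-series cross-check cannot serve as independent confirmation in the paper's logical order, since the Hilbert series $(1-t)^{-4}$ of $S$ is itself derived from this Gr\"obner basis (Proposition \ref{Hilbert series}), so using it would be circular unless you establish it by other means. As a mere sanity check on the combinatorics of normal words it is harmless, but the burden of proof rests entirely on the ambiguity resolutions.
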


\begin{proof} We order the variables as $x_3< x_2 < x_1 < x_4$ and order monomials using left lexicographical order. Then the statement follows from Bergman's diamond lemma and straightforward calculation.
\end{proof}

\begin{lemma}\label{monomial basis} There is a monomial basis for $S$ which consists of the monomials $$x_3^i x_2^j x_1^k (x_4x_3)^l \text{ and } x_3^i x_2^j x_1^k (x_4x_3)^l x_4, \ \ \ i, j, k, l \geq0.$$
\end{lemma}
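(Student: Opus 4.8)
The plan is to derive the monomial basis directly from the Gröbner basis computed in Lemma \ref{G-basis}. Recall that the set of \emph{standard monomials} — those not divisible by any leading term of an element of a Gröbner basis — forms a $\k$-basis for the quotient algebra. So the task reduces to (i) reading off the leading terms of the nine Gröbner basis elements under the chosen ordering ($x_3 < x_2 < x_1 < x_4$, left-lexicographic), and (ii) giving a clean combinatorial description of the monomials in $\{x_1,x_2,x_3,x_4\}^*$ that avoid all of those leading terms, and checking this description coincides with the claimed family $x_3^i x_2^j x_1^k (x_4x_3)^l$ and $x_3^i x_2^j x_1^k (x_4x_3)^l x_4$.

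First I would write out the nine leading terms. With $x_4$ largest, the leading term of each of the six quadratic relations is the monomial containing $x_4$ in the leftmost available slot, or, if no $x_4$ appears, the $x$-largest monomial: so $x_1x_2$ (leading term of $x_1x_2 - x_3^2$ has $x_1 > x_3$, so $\mathrm{lt} = x_1x_2$), $x_4^2$, $x_1x_3$, $x_4x_1$, $x_2x_3$, $x_4x_2$; and for the three degree-three elements, $x_4x_3^2$, $x_4x_3x_2$, $x_4x_3x_1$. A monomial is standard iff it contains none of $x_1x_2,\ x_1x_3,\ x_2x_3,\ x_4x_1,\ x_4x_2,\ x_4^2,\ x_4x_3^2,\ x_4x_3x_2,\ x_4x_3x_1$ as a (contiguous) submonomial. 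Forbidding $x_1x_2, x_1x_3, x_2x_3$ forces that once we pass from $x_3$'s to $x_2$'s to $x_1$'s we can never go back down the chain $x_3 \to x_2 \to x_1$ in the "wrong" order — more precisely, reading left to right, any maximal $x_4$-free block must be of the form $x_3^i x_2^j x_1^k$. Forbidding $x_4x_1$ and $x_4x_2$ means an $x_4$ can only be followed immediately by $x_4$ or $x_3$; forbidding $x_4^2$ kills $x_4x_4$; so after any $x_4$ the next letter (if any) is $x_3$. Then forbidding $x_4x_3^2$, $x_4x_3x_2$, $x_4x_3x_1$ means the $x_3$ following an $x_4$ must itself be followed (if anything follows) by an $x_4$. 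Hence every $x_4$ is part of a block $x_4x_3$, except possibly a terminal $x_4$ with nothing after it; these $x_4x_3$ blocks, together with any remaining terminal $x_4$, must come \emph{after} the initial $x_3^i x_2^j x_1^k$ block (an $x_3$ produced by an $x_4x_3$ pair cannot be followed by $x_2$ or $x_1$, and an $x_4$ cannot be preceded by $x_1, x_2$ without creating $x_1x_2$-type... actually here one must check an $x_4$ can be preceded by $x_3$ or $x_1$ or $x_2$, but the preceding analysis of the $x_4$-free blocks already pins everything down). Collecting: the standard monomials are exactly $x_3^i x_2^j x_1^k (x_4x_3)^l$ and $x_3^i x_2^j x_1^k (x_4x_3)^l x_4$ with $i,j,k,l \geq 0$, which is the assertion.

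The main obstacle, and the only place requiring genuine care, is the bookkeeping in that last combinatorial argument: making sure the forbidden-submonomial conditions really do force the rigid block structure $x_3^* x_2^* x_1^* (x_4x_3)^* (x_4)^{\le 1}$ and nothing escapes — in particular that no $x_4x_3$ pair can be interleaved among the $x_3, x_2, x_1$'s, and that the degree-three leading terms are exactly what's needed to prevent $(x_4x_3)^l$ from being followed by $x_3$, $x_2$, or $x_1$. I would present this as a short case analysis on, for a standard monomial $w$, the position of the first occurrence of $x_4$: if there is none, $w = x_3^i x_2^j x_1^k$ by the three quadratic $x_4$-free conditions; if $x_4$ occurs, let it occur first at position $p$, argue the prefix $w_{<p}$ is $x_4$-free hence of the form $x_3^i x_2^j x_1^k$, and then analyze the suffix $w_{\ge p}$ using the $x_4$-conditions to see it is $(x_4x_3)^l$ or $(x_4x_3)^l x_4$. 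Finally, as a sanity check (not strictly needed, but reassuring and cheap), the generating function of the claimed basis is $\frac{1}{(1-t)^3}\cdot\frac{1+t}{1-t^2} = \frac{1}{(1-t)^4}$, matching the Hilbert series $H_S(t) = (1-t)^{-4}$ forced by AS regularity of dimension four, so the count is consistent.
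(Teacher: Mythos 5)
Your proposal is correct and takes essentially the same route as the paper: both extract the nine leading terms from Lemma \ref{G-basis} and then combinatorially characterize the monomials avoiding them as factors, the only (immaterial) difference being that you organize the case analysis around the first occurrence of $x_4$ while the paper argues by the last letter of the monomial. One small caution: your heuristic that the leading term is ``the monomial containing $x_4$ in the leftmost available slot'' is imprecise (e.g.\ the leading term of $x_1x_3-x_2x_4$ is $x_1x_3$, which contains no $x_4$), but the nine leading terms you actually list are the correct ones, so the argument stands.
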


\begin{proof} From Lemma \ref{G-basis} we know there is a monomial basis consisting of all monomials which do not contain any of $x_1x_2$, $x_4^2$, $x_1x_3$, $x_4x_1$, $x_2x_3$, $x_4x_2$, $x_4x_3^2$, $x_4x_3x_2$, or $x_4x_3x_1$ as submonomials. Suppose $w$ is such a monomial. 

First, if $w = w'x_2$, then it is clear that $w'$ must be of the form $x_3^i x_2^j$ for some $i, j \geq 0$. Second, if $w = w'x_1$, then it is clear that $w'$ must be of the form $x_3^i x_2^j x_1^k$ for some $i, j, k \geq 0$. Now we have established a complete description of monomials that end in $x_2$ or $x_1$. 

Third, suppose $w = w'x_3$. If $w'$ contains no occurrences of $x_1$ or $x_2$, then it is clear that $w$ must be of the form $(x_4x_3)^l$ for some $l \geq 1$. If $w$ contains $x_1$ as a submonomial, consider the first occurrence of such an $x_1$ reading right to left. Our description of the monomials ending in $x_1$ then forces  $w$ to be of the form $x_3^ix_2^j x_1^k (x_4x_3)^l$, for some $l \geq 1$. Similarly, if $w$ contains no occurrences of $x_1$ as a submonomial, but $w$ does contain $x_2$ as a submonomial, then $w$ must be of the form $x_3^i x_2^j (x_4x_3)^l$ for some $l \geq 1$.

Finally, suppose $w = w'x_4$. Notice that $w'$ cannot end in $x_4$. Then our descriptions of monomials ending in $x_3$, $x_2$ or $x_1$ applied to the monomial $w'$ imply that $w$ is of the form $x_3^i x_2^j x_1^k (x_4x_3)^l x_4$. 
\end{proof}
We say that an element of $S$ written in terms of 
this basis is in \emph{normal form}.
For later use we note the following obvious consequence of the last result.

\begin{corollary}\label{c regular} The element $x_3 \in 
S$ is left regular, i.e., for $f \in A$, $$x_3 f= 0 \implies f = 0.$$
\end{corollary}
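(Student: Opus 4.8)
The plan is to read off the claim directly from the explicit monomial basis provided by Lemma \ref{monomial basis}. The key observation is that left multiplication by $x_3$ carries normal-form monomials to normal-form monomials in an injective way. Concretely, for any $i,j,k,l \geq 0$ one has $x_3 \cdot \big(x_3^i x_2^j x_1^k (x_4x_3)^l\big) = x_3^{\,i+1} x_2^j x_1^k (x_4x_3)^l$ and $x_3 \cdot \big(x_3^i x_2^j x_1^k (x_4x_3)^l x_4\big) = x_3^{\,i+1} x_2^j x_1^k (x_4x_3)^l x_4$, and both of these are again among the basis monomials of Lemma \ref{monomial basis}. Moreover distinct basis monomials are sent to distinct basis monomials, since the exponent tuple $(i,j,k,l)$ and the ``type'' (whether or not the monomial ends in $x_4$) are recovered from the product by reducing the leading power of $x_3$ by one.

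Given this, I would argue as follows. Suppose $f \in S$ is nonzero, and write $f$ in normal form as $f = \sum_{m} c_m\, m$, a finite $\k$-linear combination of the basis monomials $m$ of Lemma \ref{monomial basis}, with some $c_{m_0} \neq 0$. Then $x_3 f = \sum_m c_m\, (x_3 m)$, and by the observation above the monomials $x_3 m$ appearing here are again pairwise-distinct basis monomials of $S$. Since these are linearly independent in $S$ (being part of a basis) and $c_{m_0} \neq 0$, we conclude $x_3 f \neq 0$. This establishes left regularity of $x_3$.

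There is no real obstacle here: the statement is essentially a bookkeeping consequence of Lemma \ref{monomial basis}, and the only thing to verify is the elementary closure/injectivity property of left multiplication by $x_3$ on the normal-form monomials, which is immediate from the shape of those monomials. (If one prefers a Gröbner-basis phrasing, the same point is that $x_3$ times a monomial not containing any leading term of the Gröbner basis of Lemma \ref{G-basis} still contains none, so no reductions are triggered; this is the ``obvious'' remark referred to in the statement.)
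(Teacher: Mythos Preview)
Your argument is correct and is exactly the approach the paper has in mind: the corollary is stated as an ``obvious consequence'' of Lemma~\ref{monomial basis}, and your observation that left multiplication by $x_3$ sends normal-form monomials injectively to normal-form monomials is precisely the intended justification. The Gr\"obner-basis rephrasing you give at the end is also apt and matches the spirit of the paper's remark.
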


The element $x_4 \in A$ is also left regular since it is easy to see that the assignments $$x_1 \mapsto x_2,\;  x_2 \mapsto x_1,\; x_3 \mapsto x_4,\; x_4 \mapsto x_3$$ extend to an automorphism of $A$.

\begin{proposition}\label{Hilbert series} The Hilbert series of $S$ is $(1-t)^{-4}$.
\end{proposition}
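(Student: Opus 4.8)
The plan is to read off the Hilbert series directly from the monomial basis exhibited in Lemma \ref{monomial basis}. Since that lemma gives an explicit $\k$-vector space basis for $S$, namely the monomials $x_3^i x_2^j x_1^k (x_4 x_3)^l$ together with $x_3^i x_2^j x_1^k (x_4 x_3)^l x_4$ for all $i,j,k,l \geq 0$, it suffices to count these monomials by total degree and sum the resulting generating function.

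First I would record the degree contributions: each factor $x_3^i$, $x_2^j$, $x_1^k$ contributes $i$, $j$, $k$ respectively to the degree, the factor $(x_4 x_3)^l$ contributes $2l$, and the optional trailing $x_4$ contributes $1$. Because the four exponents $i,j,k,l$ range over $\NN$ independently and the trailing $x_4$ is an independent binary choice, the generating function factors as a product. The monomials of the first type have generating function
$$
\frac{1}{1-t}\cdot\frac{1}{1-t}\cdot\frac{1}{1-t}\cdot\frac{1}{1-t^2} = \frac{1}{(1-t)^3(1-t^2)},
$$
and the monomials of the second type contribute this same series multiplied by $t$. Adding the two gives
$$
H_S(t) = \frac{1+t}{(1-t)^3(1-t^2)} = \frac{1+t}{(1-t)^3(1-t)(1+t)} = \frac{1}{(1-t)^4},
$$
as claimed.

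There is essentially no obstacle here: the only thing to be careful about is that the monomials listed in Lemma \ref{monomial basis} are genuinely distinct as words (so there is no overcounting) and that the parametrization by $(i,j,k,l)$ and the binary trailing-$x_4$ choice is a bijection onto the stated basis — both of which are immediate from the form of the monomials, since from a reduced word one recovers $i,j,k,l$ and the presence or absence of the final $x_4$ uniquely. Thus the entire proof is the factorization computation above, and I would present it in precisely that form, citing Lemma \ref{monomial basis} for the basis.
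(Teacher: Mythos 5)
Your proof is correct and follows essentially the same route as the paper: both rest entirely on the monomial basis of Lemma \ref{monomial basis}. The only difference is bookkeeping — you factor the generating function as $\frac{1+t}{(1-t)^3(1-t^2)}$, while the paper counts monomials of each degree $D$ via a sum of binomial coefficients equal to $\binom{D+3}{3}$; your version is arguably cleaner.
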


\begin{proof} Fix some integer $D \geq 0$. We now count all monomials $w$ as described in Lemma \ref{monomial basis} of total degree $D$. Let $a_D$ be the number of such monomials.

First recall that the number of monomials of the form $x_3^i x_2^j x_1^k$ such that $i, j, k \geq 0$ and $i+j+k = m$ for some fixed $m \geq 0$ is the binomial coefficient $\binom{m+2}{2}$. Now consider a monomial of the form $x_3^i x_2^j x_1^k (x_4x_3)^l$, $i+j+ k +2l = D$, or $x_3^i x_2^j x_1^k (x_4x_3)^l x_4$, $i+j+k+2l+1 = D$. Notice that $l$ ranges from $0$ to $\lfloor \frac{D}{2} \rfloor$. For the monomials of type $x_3^i x_2^j x_1^k (x_4x_3)^l$, for fixed $l$, we have $\binom{D-2l+2}{2}$ such monomials; for the monomials of type $x_3^i x_2^j x_1^k (x_4x_3)^l x_4$, for fixed $l$, we have $\binom{D-2l+1}{2}$ such monomials. Therefore $$a_D = \binom{D+2}{2} + \binom{D+1}{2} + \cdots + \binom{3}{2} + \binom{2}{2}.$$ The last expression is easily seen to equal $\binom{D+3}{3}$. We conclude that the Hilbert series of $S$ is $(1-t)^{-4}$.  
\end{proof}

The following lemma has an easy proof that we leave to the reader.

\begin{lemma}\label{5-lemma} Let $$0 \xrightarrow{} V_1 \xrightarrow{} V_2 \xrightarrow{} V_3 \xrightarrow{} V_4 \xrightarrow{} V_5 \xrightarrow{} 0$$ be a complex of finite dimensional $\k$-vector spaces which is exact at $V_1$, $V_2$, $V_4$ and $V_5$. If $\dim V_3 = \dim V_2 + \dim V_4 - \dim V_1 - \dim V_5$, then the complex is exact at $V_3$.
\end{lemma}

\begin{theorem}\label{Koszul} The algebra $S$ is Koszul. 
\end{theorem}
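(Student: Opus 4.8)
The plan is to prove Koszulity of $S$ by producing, explicitly, the minimal graded free resolution of the trivial left module ${}_{S}\k$ and checking that it is linear; a connected graded algebra is Koszul precisely when ${}_{S}\k$ admits a linear free resolution, so this is all that is needed. The resolution to aim for is
\[
\mathcal F:\qquad 0 \longrightarrow S(-4) \xrightarrow{\ d_{4}\ } S(-3)^{4} \xrightarrow{\ d_{3}\ } S(-2)^{6} \xrightarrow{\ d_{2}\ } S(-1)^{4} \xrightarrow{\ d_{1}\ } S \longrightarrow \k \longrightarrow 0 ,
\]
where $d_{1}$ sends the free generators to $x_{1},\dots,x_{4}$ and $d_{2}$ sends them to the six defining relations of $S$, viewed as elements of $S(-1)^{4}$. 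The ranks $1,4,6,4,1$ are the only ones compatible with linearity, since a linear resolution forces $\sum_{i}(-1)^{i}\,\rank P_{i}\,t^{i}=1/H_{S}(t)=(1-t)^{4}$, using $H_{S}(t)=(1-t)^{-4}$ from \Cref{Hilbert series}.

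To build $d_{3}$ and $d_{4}$ one computes syzygies by linear algebra over $\k$. Writing $V=\k x_{1}\oplus\cdots\oplus\k x_{4}$ and letting $R\subseteq V\otimes V$ be the six-dimensional span of the relations, the second syzygies of the relations concentrated in degree $3$ form $(R\otimes V)\cap(V\otimes R)\subseteq V^{\otimes 3}$, which a direct computation shows is four-dimensional; a basis provides the columns of $d_{3}$. One step further, the analogous intersection inside $V^{\otimes 4}$ is one-dimensional, giving $d_{4}$, while inside $V^{\otimes 5}$ it vanishes, so the complex genuinely stops. Because each $d_{i}$ is chosen with image in $\ker d_{i-1}$, $\mathcal F$ is a complex of the stated shape; restricting to a fixed internal degree $n\ge1$ yields a five-term complex of finite-dimensional $\k$-vector spaces
\[
0 \longrightarrow S_{n-4} \longrightarrow S_{n-3}^{4} \longrightarrow S_{n-2}^{6} \longrightarrow S_{n-1}^{4} \longrightarrow S_{n} \longrightarrow 0
\]
whose alternating sum of dimensions is the coefficient of $t^{n}$ in $(1-t)^{4}H_{S}(t)=1$, hence is $0$; this is exactly the dimension hypothesis of \Cref{5-lemma} for the middle term.

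It remains to prove $\mathcal F$ is exact. Exactness at $S$ and the identification $H_{0}(\mathcal F)=\k$ are immediate. Exactness at $S(-1)^{4}$ --- that the first syzygies of $x_{1},\dots,x_{4}$ are generated by the six relations --- holds because $S$ is quadratically presented, and can be read off the Gr\"obner basis of \Cref{G-basis}. Exactness at $S(-3)^{4}$ (that the third syzygies of ${}_{S}\k$ are generated in degree $4$) and exactness at $S(-4)$ (injectivity of $d_{4}$) I would verify directly, again from \Cref{G-basis} and the normal form of \Cref{monomial basis}, by degree-by-degree dimension counts, using also the left-regularity of $x_{3}$ in $S$ (\Cref{c regular}). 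With exactness known at these four outer spots, \Cref{5-lemma}, applied in each internal degree $n\ge1$, forces exactness at the remaining middle spot $S(-2)^{6}$. Thus $\mathcal F$ is a linear free resolution of ${}_{S}\k$, so $S$ is Koszul; the argument for $T$ is the same up to the two sign changes in its relations.

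The main obstacle is that the Gr\"obner basis of \Cref{G-basis} is not quadratic --- it has the three cubic leading terms $x_{4}x_{3}^{2},\,x_{4}x_{3}x_{2},\,x_{4}x_{3}x_{1}$ --- so the associated graded (initial) algebra of $S$ is not quadratic and hence not Koszul; the cancellations that make $S$ Koszul occur only in $S$ itself and are invisible to the leading-term combinatorics. Concretely, the delicate point is exactness at $S(-2)^{6}$, i.e.\ that the second syzygies of ${}_{S}\k$ are generated in internal degree $3$ rather than $4$ --- and it is precisely this point that \Cref{5-lemma} supplies once the more tractable computations at homological degrees $3$ and $4$ are in place. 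A shortcut worth exploring is the automorphism $x_{1}\leftrightarrow x_{2},\,x_{3}\leftrightarrow x_{4}$ of $S$, which pairs up the relations: if it can be promoted to a self-duality of $\mathcal F$, then exactness at $S(-3)^{4}$ and at $S(-4)$ become dual to exactness at $S(-1)^{4}$ and at $S$, eliminating most of the hands-on work.
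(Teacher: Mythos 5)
Your proposal is correct and takes essentially the same route as the paper: the same linear complex with ranks $1,4,6,4,1$ (the Koszul complex, whose differentials are the paper's explicit matrices $M_1,\dots,M_4$), direct verification of exactness at the outer spots using the Gr\"obner basis, normal forms, and left-regularity of $x_3$ (and $x_4$), and then \Cref{5-lemma} combined with $H_S(t)=(1-t)^{-4}$ to force exactness at the middle spot $S(-2)^6$. The only cosmetic differences are that you describe $d_3,d_4$ via the intersections $(R\otimes V)\cap(V\otimes R)$ rather than writing the matrices, and you resolve ${}_{S}\k$ on the left instead of $\k_S$ on the right, neither of which changes the argument.
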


\begin{proof} 
Consider the following sequence of graded, right $S$-modules:
$$0 \xrightarrow{} S(-4) \xrightarrow{M_4} S(-3)^4 \xrightarrow{M_3} S(-2)^6 \xrightarrow{M_2} S(-1)^4 \xrightarrow{M_1} S \xrightarrow{} \k_S \xrightarrow{} 0,$$
where 
\begin{center}
\scalebox{.95}{
\begin{tabular}{ll}
$M_1 = \begin{bmatrix} x_1 & x_2 & x_3 & x_4 \end{bmatrix}$, &
$M_2 = \begin{bmatrix} x_2 & 0 & x_3 & 0 & 0 & x_4 \\ 0 & x_1 & -x_4 & 0 & x_3 & 0 \\ -x_3 & 0 & 0 & x_2 & -x_1 & 0 \\ 0 & -x_4 & 0 & -x_1 & 0 & -x_2 \\ \end{bmatrix}$, \\
$M_3 = \begin{bmatrix} 0 & x_1 & -x_3 & 0 \\ x_2 & 0 & 0 & -x_4 \\ 0 & 0 & x_1 & -x_2 \\ -x_4 & x_3 & 0 & 0 \\ -x_3 & 0 & x_2 & 0 \\ 0 & -x_4 & 0 & x_1 \end{bmatrix}$, &
$M_4 = \begin{bmatrix} x_1 \\ x_2 \\ x_3 \\ x_4 \\ \end{bmatrix}$.
\end{tabular}}
\end{center}

It is easy to check that this is a complex. Moreover, it is well known to be 
exact at $S(-1)^4$ and that $\coker M_1 = \k_A$.
Further, \Cref{c regular} shows
that it is exact at $S(-4)$.  To complete the proof
of exactness of the complex, by \Cref{5-lemma} it suffices show that it is exact at $S(-3)^4$. 

Suppose that $\begin{bmatrix} f_1 & f_2 & f_3 & f_4 \end{bmatrix}^T \in \ker M_3,$ where we assume that $f_1$, $f_2$, $f_3$, $f_4$ are written in normal form. Considering the fifth row of $M_3$, we see that $-x_3f_1 + x_2f_3 = 0$, so $x_2f_3 = x_3f_1$. Since $x_3 f_1$ is in normal form, it follows from the Gr\"obner basis in Lemma \ref{G-basis} that $f_3 = x_3t$ for some $t \in A$. Then we have $x_2x_3t = x_3f_1$, so $x_3x_1t = x_3f_1$, and since $x_3$ is left regular, we have $f_1 = x_1t$. Next, consider the fourth row of $M_3$; we have $-x_4f_1 + x_3f_2 = 0$, so $x_4f_1 = x_3f_2$, and then $x_4x_1t = x_3 f_2$, therefore $x_3x_2t = x_3 f_2$. It follows that $s_2 = x_2t$. Last, consider the second row of $M_3$; we have $x_2 f_1-x_4f_4 = 0$, so $x_2f_1 = x_4 f_4$, and so $x_2x_1t = x_4 f_4$, therefore $x_4^2t = x_4f_4$. Since $x_4$ is left regular, we have $f_4 = x_4t$.
We have proved that $\begin{bmatrix} f_1 & f_2 & f_3 & f_4 \end{bmatrix}^T$ is in the image of $M_4$,
and so the complex is exact at $S(-3)^4$.
We conclude that $S$ is Koszul.
\end{proof}

Now we compute the quadratic dual of $S$. Let
$a_1, a_2, a_3, a_4$ be a dual basis to 
$x_1, x_2, x_3, x_4$. Then it is easy to see that $S^!$ 
has a presentation as
${\k \la a_1, a_2, a_3, a_4 \ra}$ modulo the relations:
\begin{eqnarray*}
&a_1^2,a_2^2,a_3a_4,a_4a_3,a_1a_3+a_2a_4,a_3a_2+a_4a_1,&\\
&a_3a_1+a_2a_3,a_1a_4+a_4a_2,a_1a_2+a_3^2,a_2a_1+a_4^2.&
\end{eqnarray*}

\begin{proposition}\label{Frobenius} The quadratic dual algebra,
$S^!$, is Frobenius.
\end{proposition}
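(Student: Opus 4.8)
The plan is to reduce the statement to a couple of finite non-degeneracy checks and then dispatch those using information already at hand. First I would pin down the graded dimensions of $S^!$. Since $S$ is Koszul (Theorem \ref{Koszul}) and $H_S(t) = (1-t)^{-4}$ (Proposition \ref{Hilbert series}), the classical identity $H_S(t)\,H_{S^!}(-t) = 1$ for Koszul algebras gives $H_{S^!}(t) = (1+t)^4 = 1 + 4t + 6t^2 + 4t^3 + t^4$. In particular $S^!$ is finite-dimensional, concentrated in degrees $0$ through $4$, with $\dim_\k S^!_4 = 1$; fix $0 \neq \omega \in S^!_4$ and let $\pi\colon S^! \to S^!_4 \cong \k$ be the projection. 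A connected graded finite-dimensional algebra $A$ with $A_{>d}=0$ and $\dim_\k A_d = 1$ is Frobenius exactly when the form $(a,b)\mapsto \pi(ab)$ is non-degenerate, i.e.\ when each multiplication pairing $A_i\times A_{d-i}\to A_d$ is perfect; and since $H_{S^!}$ is palindromic, $\dim S^!_i = \dim S^!_{4-i}$, so for each $i$ it suffices to check that the pairing has trivial left kernel. The cases $i=0,4$ are trivial, so everything reduces to showing that the pairings $S^!_1\times S^!_3\to S^!_4$, $S^!_3\times S^!_1\to S^!_4$, and $S^!_2\times S^!_2\to S^!_4$ have no left kernel.

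To carry out these checks I would first produce an explicit monomial $\k$-basis of $S^!$ in each degree, in the style of Lemma \ref{G-basis}: order the variables and apply Bergman's diamond lemma to the ten quadratic relations of $S^!$, completing them to a Gr\"obner basis. The Hilbert series above predicts $1,4,6,4,1$ reduced monomials in degrees $0,\dots,4$, which serves as a consistency check and exhibits the normal form $\omega$ spanning $S^!_4$. With bases of $S^!_1$, $S^!_2$, $S^!_3$ fixed, the three conditions become the invertibility of three explicit matrices over $\k$ (two of size $4\times 4$, one of size $6\times 6$) whose entries are the coefficients of $\omega$ in the products of pairs of basis elements, and these are routine to evaluate. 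For the degree-$(1,3)$ pairing one can shortcut the work: the matrix $M_4 = [\,x_1\ x_2\ x_3\ x_4\,]^{T}$ appearing in the Koszul resolution of Theorem \ref{Koszul} encodes, dually, the multiplication $S^!_1\otimes S^!_3\to S^!_4$, and since its coefficient matrix is the identity (up to reordering), that pairing is automatically perfect; the degree-$(3,1)$ pairing is then perfect as well, because the swap $x_1\leftrightarrow x_2$ is an isomorphism $S\xrightarrow{\ \sim\ }S^{\mathrm{op}}$, whence $S^!\cong (S^!)^{\mathrm{op}}$.

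The only genuinely laborious point is the bookkeeping: each relation of $S^!$ involves several monomials and they are not confluent as written, so one must complete them to a Gr\"obner basis before reading off normal forms, and then evaluate the degree-$(2,2)$ matrix. There is no conceptual obstacle once the basis is in place. An essentially equivalent alternative bypasses $S^!$ entirely: apply $\Hom_S(-,S)$ to the minimal free resolution of $\k$ from Theorem \ref{Koszul}, and use Lemma \ref{5-lemma} together with the Hilbert series and the left-regularity of $x_3$ (Corollary \ref{c regular}, transported to right modules via $S\cong S^{\mathrm{op}}$) to show that the dual complex is again a resolution of $\k$ up to a shift by $4$; this says $S$ is AS-Gorenstein, which for a Koszul algebra is equivalent to $S^!$ being Frobenius.
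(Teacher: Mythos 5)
Your proposal is correct and follows essentially the same route as the paper: reduce the Frobenius property of $S^!$ to nondegeneracy of the multiplication pairing into the one-dimensional top component $(S^!)_4$, fix a monomial basis via a Gr\"obner basis computation, and verify the graded blocks in degrees $(1,3)$, $(3,1)$ and $(2,2)$ by exhibiting invertible matrices — which is exactly what the paper does, finding the $(2,2)$ block to be $I_6$ and the other two to be signed permutation matrices. Your shortcuts are sound (the assignment $x_1\leftrightarrow x_2$, fixing $x_3,x_4$, really is an anti-automorphism of $S$, and the coefficient matrix of $M_4$ in the Koszul resolution does govern one of the degree-$(1,3)$/$(3,1)$ pairings), so the only step you leave unevaluated is the routine $6\times 6$ degree-$(2,2)$ check that the paper records explicitly.
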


\begin{proof} Order the generators as $a_4 < a_2 < a_3 < a_1$. Then it is straightforward to check that a Gr\"obner basis for $S^!$ in degree three is: $$\{a_4^2a_1,~~a_2a_4^2,~~ a_2 a_4 a_2-a_4^3,~~a_3^3+a_2a_4a_1,~~
    a_2a_3^2-a_4^2 a_2,~~a_4a_2a_3\},$$
and is $\{a_4^2 a_2 a_4, a_4^3a_2, a_4a_2a_4a_1\}$ and $\{a_4^5\}$
in degrees four and five, respectively.  We know the Hilbert series of $S^!$ is $(1+t)^4$, which along with the Gr\"obner basis helps to prove that a monomial basis for $S^!$ is
$$\{
1,
a_1,a_2,a_3,a_4,
a_2a_4,a_4a_1,a_2a_3,a_4a_2,a_3^2,a_4^2,
a_4^3,a_2a_4a_1,a_4^2a_2,a_4a_2a_4,
a_4^4
\}.$$

Now consider the pairing: 
$$\la -,- \ra : S^! \tsr S^!\to \k$$
given by $\la u, v \ra = \text{(the component of } uv \text{ in } (S^!)_4 = \k a_4^4$). To prove $S^!$ is Frobenius we show that this pairing is non-degenerate. We compute the values of the pairing using the ordered monomial basis above.

The computations are easy, albeit tedious.  We write matrices with the values of $\la -,- \ra$ on basis elements as entries. Note that only the restriction of the pairing to the subspaces $S^!_2 \tsr S^!_2$, $S^!_1 \tsr S^!_3$ and $S^!_3 \tsr S^!_1$ are relevant here.

On $S^!_2 \tsr S^!_2$, the matrix of the pairing is $I_6$, where $I_6$ is 
the $6 \times 6$ identity matrix.
On $S^!_1 \tsr S^!_3$, ordering rows by
$\{a_1, a_2, a_3, a_4\}$ and columns by
$\{a_4^3,a_2a_4a_1,a_4^2a_2,a_4a_2a_4\}$ the matrix of the pairing is 
$$
\begin{bmatrix} 
0 & 0 & -1 & 0 \\
0 & 0 & 0 & 1 \\
0 & -1 & 0 & 0 \\
1 & 0 & 0 & 0 \\
\end{bmatrix}. 
$$
On $S^!_3 \tsr S^!_1$, ordering rows by $\{a_4^3,
a_2a_4a_1,a_4^2a_1,a_4a_2a_4\}$ 
and columns by $\{a_1,a_2,a_3,a_4\}$, the matrix of
the pairing is 
$$
\begin{bmatrix} 
0 & 0 & 0 & 1 \\
0 & 0 & -1 & 0 \\
-1 & 0 & 0 & 0 \\
0 & 1 & 0 & 0 \\
\end{bmatrix}. 
$$
We conclude that $\la -, - \ra$ is nondegenerate and so $S^!$ is Frobenius. 
\end{proof}

\begin{theorem}\label{regular} The algebras $S$ and $T$ are Artin-Schelter regular of dimension 4.
\end{theorem}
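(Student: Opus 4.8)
The plan is to assemble the result from the pieces already established for $S$, and then to indicate the parallel computation for $T$. Recall that a connected graded algebra $A$ generated in degree one is AS regular precisely when it is Koszul (or at least has finite global dimension) with the Koszul dual $A^!$ Frobenius of the right ``length,'' together with the Gorenstein symmetry condition; more concretely, once we have a finite graded free resolution of $\k_S$ of the shape exhibited in the proof of Theorem \ref{Koszul}, the algebra has global dimension $4$, and the self-duality of that resolution (equivalently, $S^!$ being Frobenius with top degree $4$, as in Proposition \ref{Frobenius}) gives conditions (b) and (c) of Definition \ref{def2.1}. Condition (d), finite GK-dimension, is immediate from the Hilbert series $H_S(t) = (1-t)^{-4}$ computed in Proposition \ref{Hilbert series}, which forces $\GKdim S = 4 < \infty$.

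So, concretely, for $S$ I would: (i) invoke Theorem \ref{Koszul} to get that $S$ is Koszul, hence $\gldim S = \gldim S^!$ is finite and equals the top nonvanishing degree of $S^!$; (ii) invoke Proposition \ref{Frobenius} together with the monomial basis of $S^!$ displayed there — whose top degree is $4$ with one-dimensional socle $\k a_4^4$ — to conclude $\gldim S = 4$, which is condition (a); (iii) use the Koszul/Frobenius duality to produce the minimal free resolution of $\k_S$: since $S$ is Koszul, the $i$-th term of the minimal resolution of $\k_S$ is $S(-i)^{\dim (S^!)_i}$, so with $\dim(S^!)_\bullet = (1,4,6,4,1)$ the resolution is exactly the length-four complex written in the proof of Theorem \ref{Koszul}; (iv) apply $\Hom_S(-,S)$ to this resolution and use that $S^!$ is Frobenius (so the transpose matrices $M_i^T$ reassemble, up to the Nakayama automorphism of $S^!$, into the same complex read backwards) to verify $\Ext^i_S(\k_S,S_S) = 0$ for $i \neq 4$ and $\Ext^4_S(\k_S,S_S) \cong \k(-4)$, i.e. $\bfl = -4$; the left-module statement follows by the symmetry $x_1 \leftrightarrow x_2,\ x_3 \leftrightarrow x_4$ noted after Corollary \ref{c regular}, or by running the same argument with left modules. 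This gives conditions (b) and (c), and (d) is the Hilbert series observation above, so $S$ is AS regular of dimension $4$.

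For $T$, the plan is to repeat Lemmas \ref{G-basis}, \ref{monomial basis}, Corollary \ref{c regular}, Proposition \ref{Hilbert series}, Theorem \ref{Koszul}, and Propositions \ref{Frobenius} with the two sign changes tracked through. One checks that the same monomial ordering yields a Gröbner basis of the same shape (the overlaps $x_4\cdot x_4^2$, $x_4x_3\cdot x_3$, etc. resolve identically up to signs), that $x_3$ and $x_4$ remain left regular, that $H_T(t) = (1-t)^{-4}$, that the analogous complex with the appropriate sign changes in $M_2, M_3, M_4$ is exact (the exactness argument at $T(-3)^4$ is verbatim the one given for $S$, since it only uses left regularity of $x_3,x_4$ and the Gröbner basis), and that $T^!$ is Frobenius with one-dimensional socle in degree $4$. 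Then the same deduction gives $T$ AS regular of dimension $4$.

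I expect the main obstacle to be purely bookkeeping rather than conceptual: verifying that applying $\Hom_S(-,S)$ to the resolution in the proof of Theorem \ref{Koszul} returns (a shift of) the same complex — equivalently, pinning down the Nakayama automorphism of $S^!$ well enough to identify $\Ext^4_S(\k,S)$ with $\k(-4)$ and not merely with some $\k(-4)$ twisted by a graded automorphism. In practice one sidesteps this by noting that $S$ is a noetherian connected graded Koszul algebra of global dimension $4$ whose Koszul dual is Frobenius, which by the standard argument (e.g. via the fact that Koszulity plus $S^!$ Frobenius is equivalent to $S$ being AS–Gorenstein, and finite global dimension upgrades Gorenstein to regular) already yields all of (a)–(c); the explicit resolution is then only needed as the certificate of Koszulity, which Theorem \ref{Koszul} has provided. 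The remaining work for $T$ is then entirely a matter of re-running finite, signed calculations, which I would state as ``the proof is identical to that for $S$, with the two sign changes carried through,'' exactly as the authors indicate.
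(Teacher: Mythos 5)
Your proposal is correct and follows essentially the same route as the paper: the authors likewise combine Theorem \ref{Koszul} (global dimension four and, via the Hilbert series, GK-dimension four) with Proposition \ref{Frobenius}, citing Smith's result that a Koszul algebra with Frobenius Koszul dual is AS--Gorenstein --- exactly the ``standard argument'' you invoke to sidestep the Nakayama-automorphism bookkeeping --- and then dispatch $T$ by noting the computations carry through with the sign changes. The only quibble is your phrase ``$\gldim S = \gldim S^!$'' (the intended and correct statement is that $\gldim S$ equals the top nonvanishing degree of the finite-dimensional algebra $S^!$, or simply the length of the explicit resolution), which does not affect the argument.
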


\begin{proof} We know from Theorem \ref{Koszul} that $S$ has 
global dimension four, and has GK-dimension four. According to 
\cite[ Proposition 5.10]{Sm}, since $S^!$ is Frobenius we can 
conclude that $S$ is Gorenstein. Therefore $S$ is Artin-Schelter 
regular of dimension four. The same method can be used to prove 
that $T$ is Artin-Schelter regular of dimension four.
%
\end{proof} 

\begin{remark}
    $S$ has only one central element in degree two up to scalar multiple, namely $x_1^2 + x_2^2 + x_3x_4 + x_4x_3$.
Hence this algebra is not graded isomorphic to either the Sklyanin algebra or the algebra of the previous section. Note also that $S$ and $T$ are not isomorphic to each other, since it is straightforward to check that $T$ has no central (or normal) elements of degree two.\\
\end{remark}

Now we determine the group-graded identity components of $S$ and $T$.

\begin{proposition}
\label{invar-s}
The invariant subalgebra $S_e$ is the subalgebra of $S$ generated by the set $Y=\{x_1^2, x_2^2, x_3x_4,x_4 x_3\}$, and is isomorphic to $\mathbb{K}[z_1,\dots, z_4]$, the commutative polynomial ring in four variables.    
\end{proposition}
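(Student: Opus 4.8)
The plan is to apply \Cref{free modules} in exactly the same way that it was used to prove \Cref{invar-r}. First I would fix the notation to match the hypotheses of that theorem: let $B$ be the $\k$-subalgebra of $S$ generated by $Y = \{y_1, y_2, y_3, y_4\} = \{x_1^2, x_2^2, x_3x_4, x_4x_3\}$, let $I$ (resp.\ $J$) be the two-sided ideal of $S$ (resp.\ $B$) generated by $Y$, and let $C = \k[z_1,\dots,z_4]$ be the polynomial ring in four variables, with the obvious map $z_i \mapsto y_i$. Note each $y_i$ has degree $2$, so $H_C(t) = (1-t^2)^{-4}$, and $H_S(t) = (1-t)^{-4}$ by \Cref{Hilbert series}.

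Next I would verify the four hypotheses of \Cref{free modules}. (i) \emph{The $y_i$ commute}, so that $\pi\colon C \to B$ is a well-defined graded surjection: this is a finite check using the Gr\"obner basis of \Cref{G-basis} — e.g.\ $x_1^2 \cdot x_2^2$, $x_1^2 \cdot x_3x_4$, etc., all reduce to the same normal form regardless of order. (ii) \emph{$C$ is a domain} with the required Hilbert series: immediate, being a polynomial ring on degree-two generators. (iii) \emph{$I = JS = SJ$}: as in the proof of \Cref{invar-r}, it suffices to show that for each generator $x_i$ and each $y_j$ one has $x_i y_j = y_k x_i$ (or a $\k$-combination thereof) for suitable $k$; since the $\Re$-grading is by a group, left- and right-multiplication by a generator permutes the components, and $Y$ is (up to the structure of $S_e$) closed under this, so this is again a finite verification using the relations of $S$. (iv) \emph{$S/I$ is finite-dimensional with $H_{S/I}(t) = \prod_i(1-t^{2})\, H_S(t) = (1-t^2)^4 (1-t)^{-4} = (1+t)^4$}: I would compute a monomial $\k$-basis of $S/I$ directly from the monomial basis of $S$ in \Cref{monomial basis}, i.e.\ reduce the monomials $x_3^i x_2^j x_1^k (x_4x_3)^l$ and $x_3^i x_2^j x_1^k (x_4x_3)^l x_4$ modulo $I$ (so $x_1^2, x_2^2, x_3x_4, x_4x_3 \equiv 0$) and exhibit the $16$ surviving monomials, confirming Hilbert series $1 + 4t + 6t^2 + 4t^3 + t^4$. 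This can equivalently be packaged as a diamond-lemma computation for $S/I$ as in \Cref{invar-r}.

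With all hypotheses checked, \Cref{free modules} yields that $\pi\colon C \to B$ is an isomorphism — so $B \cong \k[z_1,\dots,z_4]$ — and that $S$ is a free left $B$-module of rank $16$ with basis given by (lifts of) the $16$ monomials found in step (iv). To finish, I would observe that $B \subseteq S_e$ is clear since each $y_i$ lies in the identity component: $x_1^2$ has grade $b^2 = e$, $x_2^2$ has grade $(bc)^2 = e$, $x_3x_4$ has grade $(ab)(abcd) = e$, and $x_4x_3$ has grade $(abcd)(ab) = e$, using the relations of $SD_{16}$. Conversely, every nonunit monomial in the chosen $B$-basis of $S$ lies in a nontrivial $SD_{16}$-degree, so the identity component of the free module decomposition is exactly $B \cdot 1 = B$; hence $S_e = B \cong \k[z_1,\dots,z_4]$.

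The main obstacle is the bookkeeping in step (iv): one must carefully enumerate the normal-form monomials surviving modulo $I$ and confirm there are exactly $16$ of them (equivalently, that no unexpected reductions occur), since an error here would break the Hilbert-series hypothesis of \Cref{free modules}. The commutativity check in (i) and the normality check in (iii) are routine but must be done for all relevant pairs; everything else is formal once those finite computations are in hand.
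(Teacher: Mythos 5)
Your proposal is correct and follows essentially the same route as the paper: both apply Theorem \ref{free modules} with $B$, $I$, $J$, $C$ set up identically, verify commutativity of the $y_i$, the equality $I = JS = SJ$ via $x_iy_j = y_kx_i$, and a sixteen-element monomial basis of $S/I$ giving $H_{S/I}(t) = (1+t)^4$, before concluding $S_e = B \cong \k[z_1,\dots,z_4]$ from the nontrivial $SD_{16}$-grades of the nonunital basis elements. Your explicit verification that each $y_i$ has group grade $e$ is a small addition the paper leaves implicit, but the argument is otherwise the same.
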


\begin{proof}
    The proof will be an application of \Cref{free modules}, similar to \Cref{invar-r}. Set $B$ as the subalgebra of $S$ generated by $Y$, $I$ as the ideal of $S$ generated by $Y$, $J$ as the ideal of $B$ generated by $Y$, and $C$ to be the polynomial ring $\k[z_1,\dots, z_4]$ in four variables. Denote the elements of $Y$ by $y_1,\dots, y_4$. It can be checked using the relations of $S$ that the elements of $Y$ pairwise commute, so that we have a ring homomorphism $\pi: C \to B$ given by $\pi(z_i) = y_i$.
    
    We can additionally check that $I=JS = SJ$ by showing that for each $i,j=1,\dots, 4,$ we have $x_i y_j = y_k x_i$ for some $k=1,\dots, 4$. Using the diamond lemma, one may check that the images of the following elements constitute a $\mathbb{K}$-basis for $S/I$: 
    $$\{1,
        x_1,x_2,x_3,x_4,
        x_3^2,x_2x_4,x_3x_1,x_2x_1,x_3x_2,x_1x_4,
        x_3^2x_1,x_3^3,x_2x_1x_4,x_3x_2x_4,
        x_3^4\}$$
    That is, the Hilbert series of $S/I$ is $(1+t)^4$. By Proposition \ref{Hilbert series} the Hilbert series of $S$ is $(1-t)^{-4}$, so we have $H_{S/I}(t)=(1-t^2)^4 H_{S}(t),$ and hence all assumptions of \Cref{free modules} are satisfied. It follows that $\pi: C \to B$ is an isomorphism, that $S$ is a free $B$-module of rank 16, and the elements in (1)-(5) above constitute a $B$-basis for $S$. It is clear that $B \subseteq S_e$; to complete the proof, just note that all non-unital elements of the $B$-basis of $S$ given above are in nontrivial $SD_{16}$-degree, and hence $S_e = B . 1 =B.$
\end{proof}

The proof of the next result is similar to that of \Cref{invar-s} and is omitted. It is interesting to note that the identity component $T_e$ is not a commutative polynomial ring. 

\begin{proposition}
\label{invar-t}
The invariant subalgebra $T_e$ is the subalgebra of $T$ generated by the set $Y=\{y_1=x_1^2, \;y_2= x_2^2,\; y_3=x_3x_4,\; y_4 = x_4 x_3\}$. The pairs $(y_1, y_2)$, $(y_1, y_3)$, $(y_2, y_4)$ and $(y_3, y_4)$ $(-1)$-skew commute, whereas the pairs $(y_1, y_4)$ and $(y_2, y_3)$ commute. 
Hence $T_e$ is a skew polynomial ring and therefore AS regular. 
\end{proposition}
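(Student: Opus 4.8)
The plan is to imitate the proof of \Cref{invar-s}, the key tool again being \Cref{free modules}. The proof of \Cref{regular} already asserts that the argument for $S$ carries over to $T$, so I would begin by recording the corresponding preliminary facts for $T$: choosing a suitable monomial order (for example $x_3 < x_2 < x_1 < x_4$ with left lexicographic order, as in \Cref{G-basis}), Bergman's diamond lemma produces a finite Gr\"obner basis for the defining ideal of $T$, hence a monomial $\k$-basis of $T$, and one reads off $H_T(t) = (1-t)^{-4}$.

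Next I would verify, by a direct computation with the six relations of $T$, the asserted relations among $y_1 = x_1^2$, $y_2 = x_2^2$, $y_3 = x_3x_4$, $y_4 = x_4x_3$: the pairs $(y_1,y_2)$, $(y_1,y_3)$, $(y_2,y_4)$, $(y_3,y_4)$ anticommute while $(y_1,y_4)$ and $(y_2,y_3)$ commute. Let $B \subseteq T$ be the subalgebra generated by $Y = \{y_1,y_2,y_3,y_4\}$, and let $C = \k\la z_1,z_2,z_3,z_4\ra$ modulo the matching $\pm 1$ skew-commutation relations, with each $z_i$ in degree $2$. Since all the parameters are $\pm 1$, $C$ is an iterated Ore extension of $\k$ by diagonal automorphisms, so it is a noetherian domain, a skew polynomial ring (hence AS regular of global dimension four), and $H_C(t) = (1-t^2)^{-4}$ for this grading. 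The assignment $z_i \mapsto y_i$ defines a graded algebra surjection $\pi : C \to B$.

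It remains to check the remaining hypotheses of \Cref{free modules}, with $d_i = 2$ for all $i$. Writing $I$ for the two-sided ideal of $T$ generated by $Y$ and $J$ for the two-sided ideal of $B$ generated by $Y$, one verifies $I = JT = TJ$ by exhibiting, for each $i,j$, an identity $x_i y_j = y_k x_i$ (a sign is harmless); and using the diamond lemma one computes an explicit $16$-element monomial basis of $T/I$, giving $H_{T/I}(t) = (1+t)^4 = (1-t^2)^4 H_T(t)$. \Cref{free modules} then yields that $\pi$ is an isomorphism, so $B \cong C$ is a skew polynomial ring, hence AS regular, and that $T$ is free as a left (and right) $B$-module of rank $16$ with basis the lifts of the chosen basis of $T/I$. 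Since every nonunit basis element lies in a nontrivial $SD_{16}$-component, $T_e = B \cdot 1 = B$, which proves the proposition.

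The substantive work — and the only place anything could go wrong — is the two explicit computations: the Gr\"obner basis for $T$ (whence $H_T(t) = (1-t)^{-4}$) and the $16$-element basis of $T/I$ together with $I = JT = TJ$. As $T$ differs from $S$ only in two signs, these are routine variants of the calculations already carried out for $S$; the one point to watch is whether a sign change turns some $x_i y_j = y_k x_i$ into $x_i y_j = -y_k x_i$, which does not affect the conclusion that $Y$ generates $I$ as a two-sided ideal with $I = JT = TJ$.
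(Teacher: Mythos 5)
Your proposal is correct and follows exactly the route the paper intends: the paper omits the proof of this proposition, stating only that it is similar to that of Proposition~\ref{invar-s}, and your argument is precisely that adaptation, with the commutative polynomial ring $C$ replaced by the $(\pm 1)$-skew polynomial ring matching the stated relations among the $y_i$ (which is still a domain with Hilbert series $(1-t^2)^{-4}$, so Theorem~\ref{free modules} applies verbatim). The points you flag as needing verification (the Gr\"obner basis and monomial basis of $T$, the relations among the $y_i$, $I=JT=TJ$ up to harmless signs, and the $16$-element basis of $T/I$) are exactly the routine computations the paper suppresses.
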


\subsection{Algebraic and homological properties of \texorpdfstring{$R,S,$}{R,S} and \texorpdfstring{$T$}{T}}
 In the previous sections we have shown that $R,S,$ and $T$ are AS regular algebras of dimension four and that each can be homogeneously graded by a group of order 16 so that the identity component, the fixed ring under the action of the dual, is also AS regular.  In this section we prove several more algebraic and homological properties of these algebras.


\begin{lemma}\label{central regular sequences}
The following sequences are central, regular sequences in $R$, $S$, $T$, respectively:
\begin{itemize}
\item[(1)] $x_2^2-x_1^2$, $x_4^2-x_3^2$, $x_1^2x_2^2$, $x_3^2x_4^2$; 
\item[(2)] $x_4 x_3+x_1^2 + x_2^2 + x_3 x_4$, $x_1^2x_4x_3+x_3x_1^2x_4$, $x_2^2 x_4 x_3+x_2^2 x_1^2+x_3 x_2^2 x_4+x_3^2 x_2 x_1$, $x_3^4$;
\item[(3)] $(x_4 x_3)^2+x_1^4+x_2^4+(x_3 x_4)^2$,
$x_3^4$, 
$x_1^4 (x_4 x_3)^2 + x_3 x_1^4 x_4 x_3 x_4$,
$x_2^4 (x_4 x_3)^2 +x_2^4 x_1^4+x_3 x_2^4 x_4 x_3 x_4+x_3^2 x_2^3 x_1^3$.
\end{itemize}
\end{lemma}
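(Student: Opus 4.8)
The plan is to verify, for each algebra $A\in\{R,S,T\}$ and its listed quadruple $c_1,c_2,c_3,c_4$, that (i) every $c_i$ is a homogeneous central element of $A$, and (ii) the quadruple is a regular sequence. Part (i) is a direct computation with the relations. For (ii) I would invoke Proposition \ref{normal regular sequences via Hilbert series}: since central elements are normal, regularity of the sequence is equivalent to the single Hilbert-series identity $H_{A'}(t)=\prod_{i=1}^4(1-t^{d_i})\,H_A(t)$, where $d_i=\deg c_i$ and $A'=A/\langle c_1,c_2,c_3,c_4\rangle$.

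For (i) it suffices to check that each $c_i$ commutes with each generator $x_j$, reducing both $c_ix_j$ and $x_jc_i$ to normal form against the defining relations (for $S$ using the Gr\"obner basis of Lemma \ref{G-basis}, and similarly for $T$; for $R$ using the presentation of $R$ fixed at the end of Section 3.1). For $R$ the work halves: the map $\sigma\colon x_1\mapsto x_3,\ x_2\mapsto x_4,\ x_3\mapsto x_1,\ x_4\mapsto x_2$ preserves the six defining relations, hence is an automorphism, and it carries $c_1=x_2^2-x_1^2$ to $c_2$ and $c_3=x_1^2x_2^2$ to $c_4$; so only $c_1$ and $c_3$ need a hand check, and $c_1$ is easy once one notes that $x_1^2$ and $x_2^2$ each already commute with $x_1$ and $x_2$ (from $x_1x_2=-x_2x_1$). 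For $S$, $c_1$ is the unique degree-two central element recorded in the Remark following Theorem \ref{regular}; the remaining checks for $S$ and $T$ are routine reductions, streamlined by first recording the normal forms of $x_3x_4$, $x_4x_3$, $x_1^2$, $x_2^2$ against the generators, which shortens the degree-four and degree-eight cases.

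For (ii), all three algebras have $H_A(t)=(1-t)^{-4}$ (Propositions \ref{r-props}, \ref{Hilbert series}, and the analogous computation for $T$). The degree multiset of the quadruple is $(2,2,4,4)$ for $R$, $(2,4,4,4)$ for $S$, and $(4,4,8,8)$ for $T$, so the target polynomial $\prod_i(1-t^{d_i})\,(1-t)^{-4}$ equals $(1+t)^4(1+t^2)^2$ for $R$, $(1+t)^4(1+t^2)^3$ for $S$, and $(1+t)^4(1+t^2)^4(1+t^4)^2$ for $T$, of total dimensions $\dim_{\k}R'=64$, $\dim_{\k}S'=128$, and $\dim_{\k}T'=1024$. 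I would compute $H_{A'}(t)$ by adjoining $c_1,\dots,c_4$ to the Gr\"obner basis of $A$ and completing via Bergman's diamond lemma; the surviving reduced monomials form a $\k$-basis of $A'$, and one checks that their generating function is exactly the target polynomial, which gives the required identity.

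The main obstacle is the size of these Gr\"obner-basis completions, in particular for $T$, where the last two elements have degree eight and $\dim_{\k}T'=1024$, so in practice this step is done by computer; some care is also needed that the monomial order of Lemma \ref{G-basis} (and the one chosen for $T$) extends to a term order handling the generators used and that the completion terminates. A cleaner but less self-contained alternative, available if one first shows that $S$ and $T$ are GK-Cohen-Macaulay ($R$ already is, by Proposition \ref{r-props}), would be to invoke the graded form of ``a homogeneous system of parameters in a Cohen-Macaulay ring is a regular sequence,'' whereby (ii) would reduce to checking merely that each $A'$ is finite-dimensional; but establishing Cohen-Macaulayness for $S,T$ is itself work, so I would keep to the Hilbert-series route.
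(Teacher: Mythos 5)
Your proposal is correct and follows essentially the same route as the paper: centrality is verified by direct normal-form computation against the Gr\"obner bases, and regularity is deduced from Proposition \ref{normal regular sequences via Hilbert series} after computing $H_{A'}(t)$ via a diamond-lemma/Gr\"obner-basis completion of the quotient, with your target polynomials $(1+t)^4(1+t^2)^2$, $(1+t)^4(1+t^2)^3$, $(1+t)^4(1+t^2)^4(1+t^4)^2$ matching the paper's. The only genuine (and harmless) addition is the automorphism $x_1\leftrightarrow x_3$, $x_2\leftrightarrow x_4$ of $R$ used to halve the centrality checks.
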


\begin{proof}
The method of proof is the same for (1), (2) and (3). 

For (1), order monomials in the free algebra $\k \la x_1, x_2, x_3, x_4 \ra$ according to $x_1 < x_2 < x_3 < x_4$ and left-lexicographic degree order. Using Bergman's diamond lemma, it is easy to check that the defining ideal of $R$ has a quadratic Gr\"obner basis, and that $R$ has a monomial basis given by $x_1^ix_2^jx_3^kx_4^l$, $i, j, k, l \geq 0$. This makes it straightforward to check that the elements given in (1) are indeed central.   Let $$R' = R/\la x_2^2-x_1^2, x_4^2-x_3^2, x_1^2x_2^2, x_3^2x_4^2 \ra.$$
Another application of the diamond lemma shows that the defining ideal of $R'$ has a finite Gr\"obner basis whose leading terms are: $$x_2 x_1, x_3 x_1, x_3 x_2, x_4 x_1, x_4 x_2, x_4 x_3, x_2^2, x_4^2, x_1^4, x_3^4.$$ This makes it clear that $R'$ has a monomial basis given by: $$x_1^i x_2^j x_3^k x_4^l, \quad 0 \leq i, k \leq 3, \quad 0 \leq j, l \leq 1.$$
Hence $$H_{R'}(t) = \left(\dfrac{1-t^2}{1-t}\right)^2 \left(\dfrac{1-t^4}{1-t}\right)^2  = (1-t^2)^2(1-t^4)^2H_R(t).$$ 
We conclude by Proposition \ref{normal regular sequences via Hilbert series} 
that the sequence in (1) is regular. 

For (2), order monomials in the free algebra $\k \la x_1, x_2, x_3, x_4 \ra$ according to $x_3 < x_2 < x_1 < x_4$ and left-lexicographic degree order. Recall,  by  \Cref{G-basis} and \Cref{monomial basis}, that the defining ideal of $S$ has a finite Gr\"obner basis, and that $S$ has a monomial basis given by $x_3^ix_2^jx_1^k(x_4x_3)^lx_4^m$, $i, j, k, l \geq 0$, $0 \leq m \leq 1.$   This makes it straightforward to check that the elements given in (2) are indeed central.   Let $$S' = S/\la x_4 x_3+x_1^2 + x_2^2 + x_3 x_4, x_1^2x_4x_3+x_3x_1^2x_4, x_2^2 x_4 x_3+x_2^2 x_1^2+x_3 x_2^2 x_4+x_3^2 x_2 x_1, x_3^4 \ra.$$ Another application of the diamond lemma shows that the defining ideal of $S'$ has a finite Gr\"obner basis whose leading terms are: $$x_1 x_2, x_4^2, x_1 x_3, x_2 x_3, x_4 x_2, x_4 x_1, x_4x_3, x_1^4, x_2^4, x_3^4.$$ This makes it clear that $S'$ has a monomial basis given by: $$x_3^i x_2^j x_1^k x_4^l, \quad 0 \leq i, j, k \leq 3, \quad 0 \leq l \leq 1.$$

Hence $$H_{S'}(t) = \left(\dfrac{1-t^2}{1-t}\right) \left(\dfrac{1-t^4}{1-t}\right)^3  = (1-t^2)(1-t^4)^3 H_S(t).$$ 

We conclude by Proposition \ref{normal regular sequences via Hilbert series} 
that the sequence in (2) is regular. 

For (3), order monomials in the free algebra $\k \la x_1, x_2, x_3, x_4 \ra$ according to $x_3 < x_2 < x_1 < x_4$ and left-lexicographic degree order. Using Bergman's diamond lemma, it is easy to check that the defining ideal of $T$ has a finite Gr\"obner basis, and that $T$ has a monomial basis given by $x_3^ix_2^jx_1^k(x_4x_3)^lx_4^m$, $i, j, k, l \geq 0$, $0 \leq m \leq 1.$  
This makes it straightforward to check that the elements given in (3) are indeed central.   Let $T' = T/\la z_1,z_2, z_3, z_4 \ra$
where 
$$z_1 = (x_4 x_3)^2+x_1^4+x_2^4+(x_3 x_4)^2,\quad
z_2= x_3^4, \quad
z_3= x_1^4 (x_4 x_3)^2 + x_3 x_1^4 x_4 x_3 x_4,$$
$$z_4 =x_2^4 (x_4 x_3)^2 +x_2^4 x_1^4+x_3 x_2^4 x_4 x_3 x_4+x_3^2 x_2^3 x_1^3.$$
Another application of the diamond lemma shows that the defining ideal of $T'$ has a finite Gr\"obner basis whose leading terms are: $$x_1 x_2, x_4^2, x_1 x_3, x_2 x_3, x_4 x_2, x_4 x_1, x_4x_3x_1, x_4x_3x_2, x_4x_3^2, (x_4x_3)^2, x_3^4, x_1^8, x_2^8.$$ 
This makes it clear that $T'$ has a monomial basis given by: $$x_3^i x_2^j x_1^k(x_4x_3)^l x_4^m, \quad 0 \leq i \leq 3, \quad 0 \leq j, k \leq 7, \quad 0 \leq l, m \leq 1.$$

Hence $$H_{T'}(t) = (1+t^2) \left(\dfrac{1-t^2}{1-t}\right) \left(\dfrac{1-t^4}{1-t}\right) \left(\dfrac{1-t^8}{1-t}\right)^2  = (1-t^4)^2(1-t^8)^2H_T(t).$$ 

We conclude by Proposition \ref{normal regular sequences via Hilbert series} 
that the sequence in (3) is regular.  
\end{proof}


\begin{corollary} \label{consequences of PI}
The algebras $R$, $S$ and $T$ are all noetherian PI algebras of finite global dimension, and hence by \cite{SZ} their centers are noetherian integrally closed domains, and
they are:
\begin{itemize}
\item[(1)] Auslander-Gorenstein,
\item[(2)] Auslander-regular,
\item[(3)] GK-Cohen-Macaulay ($j(M) + \operatorname{GKdim}(M) =  \operatorname{GKdim}(R)$ for all finitely generated modules, where
$j(M) = \min\{j: \Ext_R^j(M,R) \neq 0\}$),
\item[(4)] Macaulay (in the sense of \cite{SZ}: $j(M) + \operatorname{Kdim}(M) = \operatorname{Kdim}(R)$ for all finitely generated modules $M$, where $\operatorname{Kdim}$ denotes Krull dimension),
\item[(5)] maximal orders in their quotient division rings.

\end{itemize}
\end{corollary}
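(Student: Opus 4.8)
The plan is to deduce everything from a single structural fact: each of $R$, $S$, and $T$ is module-finite over a central polynomial subring, whereupon standard PI theory and the cited theorem of Stafford–Zhang apply. The engine is Corollary \ref{central regular sequence implies module-finite}, whose hypotheses are precisely what Lemma \ref{central regular sequences} supplies. First I would record that each algebra $A \in \{R, S, T\}$ is AS regular of dimension four (Proposition \ref{r-props} and Theorem \ref{regular}), hence noetherian, connected graded, with $H_A(t) = (1-t)^{-4}$, so $A$ satisfies the Hilbert series hypothesis $H_A(t) = p(t)/(1-t)^n$ with $n = 4$ and $p(1) = 1 \neq 0$. Next, Lemma \ref{central regular sequences} gives, in each case, a central regular sequence $c_1, c_2, c_3, c_4$ of homogeneous elements of positive degree. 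Applying Corollary \ref{central regular sequence implies module-finite}, we conclude $A$ is module-finite over its center $Z(A)$, and in fact a free module of finite rank over the polynomial subring $C = \k[c_1, c_2, c_3, c_4]$.

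Once $A$ is module-finite over an affine commutative subalgebra, $A$ is noetherian (already known) and satisfies a polynomial identity by the Artin–Procesi / Amitsur framework — concretely, $A$ embeds in matrices over a commutative ring after localization, or more simply any algebra module-finite over its noetherian center is PI. Combined with finite global dimension (gldim $A = 4$, from Proposition \ref{r-props} for $R$ and from Koszulity plus the Gorenstein computation, Theorem \ref{Koszul} and Theorem \ref{regular}, for $S$ and $T$), $A$ is a noetherian PI algebra of finite global dimension. This is exactly the hypothesis of the main theorem of \cite{SZ}. I would then invoke \cite{SZ} directly: it yields that $Z(A)$ is a noetherian integrally closed domain and that $A$ is Auslander-Gorenstein, Auslander-regular, GK-Cohen–Macaulay, Macaulay, and a maximal order in its quotient division ring. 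For the "domain" input that several of these conclusions implicitly want, we already assumed throughout (Section \ref{sect-prelim}) that $A$ is a domain, and Propositions \ref{r-props}, \ref{regular} reconfirm it for $R$; for $S$ and $T$ the Gröbner/monomial basis (Lemmas \ref{G-basis}, \ref{monomial basis}) shows the algebras are domains as well.

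The only genuine content beyond citation is checking that the hypotheses of \cite{SZ} are met and that Lemma \ref{central regular sequences} applies verbatim; the main (very minor) obstacle is bookkeeping the global dimension statement for $S$ and $T$, namely confirming gldim $= \operatorname{GKdim} = 4$ so that finite global dimension is in force — this is immediate from Theorems \ref{Koszul} and \ref{regular} together with $H_A(t) = (1-t)^{-4}$. Thus the proof is essentially: "By Lemma \ref{central regular sequences} and Corollary \ref{central regular sequence implies module-finite}, each of $R$, $S$, $T$ is module-finite over a central polynomial subring, hence noetherian PI of finite global dimension; now apply \cite{SZ}." I would write it in that order, expanding each clause of the conclusion by quoting the corresponding part of the \cite{SZ} result, and noting that the GK-Cohen–Macaulay and Macaulay formulas hold in the stated forms because $\operatorname{GKdim} A = \operatorname{Kdim} A = 4$ for these algebras.
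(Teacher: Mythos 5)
Your proposal is correct and follows essentially the same route as the paper: combine Lemma \ref{central regular sequences} with Corollary \ref{central regular sequence implies module-finite} to get that each of $R$, $S$, $T$ is module-finite over its center (indeed free over a central polynomial subring), conclude PI, and then cite Stafford--Zhang \cite{SZ} for all the listed properties. The extra bookkeeping you include (Hilbert series hypothesis, global dimension four, domain property) is consistent with what the paper has already established and does not change the argument.
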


\begin{proof}
Combining \Cref{central regular sequences} and \Cref{central regular sequence implies module-finite} (1) we see that each of $R$, $S$ and $T$ is a finitely generated module over its center. Hence these algebras are PI. The rest of the properties follow from work of Stafford-Zhang \cite{SZ}.
\end{proof}

\section{Geometry associated to \texorpdfstring{$R$}{R}, \texorpdfstring{$S$}{S} and \texorpdfstring{$T$}{T}}
\label{sect-geom}

In this section we study the geometry of points and lines for the algebras $R, S,$ and $T$.
For the computations, we fix a primitive eighth root
of unity $\zeta \in \k$, and set $\i = \zeta^2$.
 As noted in the introduction, quadratic AS regular algebras of dimension four with twenty points and a one-dimensional line scheme are called ``generic"; we will show that $S$ and $T$ are generic.

\subsection{Computing point and line schemes}
 In this subsection we describe how the point schemes and line schemes were computed.
In order to prove that certain schemes are reduced, we will use the following well-known lemma. 

\begin{lemma}\label{checking reduced on affine cover}\cite[Lemma 28.3.2(2)]{stacks}
Let $X$ be a scheme and assume that there is an affine open cover $\{U_i = \Spec A_i\}_{i \in I}$ such that $A_i$ is a reduced ring for all $i \in I$. Then $X$ is reduced.
\end{lemma}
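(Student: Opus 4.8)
The statement is essentially the assertion that being a reduced scheme is a property that can be checked locally on an affine open cover, and the plan is to reduce it to the corresponding statement about rings. First I would recall the definition: a scheme $X$ is reduced if and only if for every open $U \subseteq X$ the ring $\mathcal{O}_X(U)$ has no nonzero nilpotents; equivalently, if and only if every stalk $\mathcal{O}_{X,x}$ is a reduced ring. I would work with the stalk characterization, since it is the most obviously local one. Fix a point $x \in X$ and choose $i \in I$ with $x \in U_i = \Spec A_i$. Since $U_i$ is open in $X$, the inclusion induces an isomorphism on stalks $\mathcal{O}_{X,x} \cong \mathcal{O}_{U_i,x}$, and the latter is the localization $(A_i)_{\mathfrak{p}}$ at the prime $\mathfrak{p}$ corresponding to $x$.

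The key step is then purely commutative-algebraic: a localization of a reduced ring is reduced. Indeed, if $a/s \in (A_i)_{\mathfrak{p}}$ satisfies $(a/s)^n = 0$, then there is $t \notin \mathfrak{p}$ with $t a^n = 0$ in $A_i$, hence $(ta)^n = t^{n-1}(ta^n) = 0$, so $ta = 0$ by reducedness of $A_i$, whence $a/s = 0$ in the localization. Since $A_i$ is reduced by hypothesis, $\mathcal{O}_{X,x} \cong (A_i)_{\mathfrak{p}}$ is reduced. As $x$ was arbitrary, every stalk of $X$ is reduced, and therefore $X$ is reduced.

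Since this lemma is quoted directly from \cite[Lemma 28.3.2(2)]{stacks}, in the actual write-up I would simply cite that reference rather than reproduce the argument; the sketch above is what the proof would look like if one wanted it self-contained. I do not anticipate any real obstacle here: the only mild subtlety is making sure one uses a characterization of "reduced scheme" that is genuinely local (stalks, or sections over arbitrary opens together with the sheaf axiom), rather than one that a priori involves a chosen cover — but once the stalk description is in hand, the argument is immediate and the lemma follows with no further input.
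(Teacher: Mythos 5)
Your argument is correct and is essentially the standard proof of the cited result: the paper itself gives no proof, simply quoting \cite[Lemma 28.3.2(2)]{stacks}, and the Stacks Project argument is exactly your route (reducedness is detected on stalks, and stalks are localizations of the $A_i$, which stay reduced under localization). Citing the reference, as you propose, is all that is needed here.
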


First we discuss the computation of point schemes. We begin by recalling some definitions. Let $V$ be a finite-dimensional $\k$-vector space. Let $A = T(V)/I$ be a finitely presented quadratic $\k$-algebra.

\begin{definition}\label{def of point module}
A graded right $A$-module $M$ is a \emph{point module} if $M$ 
is cyclic, generated in degree $0$, and the Hilbert series of 
$M$ is $H_M(t) = (1-t)^{-1}$.  The isomorphism class of a point
module over $A$ is called a \emph{point of $A$}.
\end{definition} 

Following \cite{ATV1}, we now explain a condition that implies that
isomorphism classes of point modules are parametrized by a projective scheme. 
Let $\G$ be the scheme of zeros in $\P(V^*) \times \P(V^*)$ of the 
multilinearizations of the defining relations of $A$. Let $E$ be the
scheme-theoretic image of the canonical projection $\pi_1: \G \to \P(V^*)$.
If $\G$ is the graph of a scheme automorphism $\t: E \to E$, then it is well 
known that the set of closed points of $E$ is in bijective correspondence with
the set of isomorphism classes of point modules of $A$, and hence the points
of $A$.  In this case, the scheme $E$ is called the \emph{point scheme of $A$}.

If $A$ is a quadratic AS regular algebra of global dimension two or three, then
$\G$ is the graph of an automorphism. A theorem of Shelton-Vancliff,
\cite[Theorem 1.4]{SV1}, states that if $A$ is a noetherian,
Auslander-regular, GK-Cohen Macaulay algebra of global dimension four
with Hilbert series $(1-t)^{-4}$, then $\G$ is the graph of a scheme
automorphism $\tau: E \to E$.

Let $A$ be any of the algebras $R$, $S$ or $T$. We have proved that 
$A$ has global dimension four and Hilbert series $(1-t)^{-4}$. 
Furthermore, by Corollary \ref{consequences of PI} the hypotheses of 
\cite[Theorem 1.4]{SV1} are satisfied. Hence $\G$ is the graph of a 
scheme automorphism $\t: E \to E$ and we may speak of the point 
scheme, $E$.  We will also denote the point scheme of $A$ by 
${\mathcal P}(A)$.

We follow the method outlined in \cite{ATV1}. To compute $E$, let
$V = \bigoplus_{i=1}^4 \k x_i$ be the vector space of degree one 
elements of $A$. Let $\{e_i\}_{1 \leq i \leq 4} \subset V^*$ be the 
dual basis to $\{x_i\}_{1 \leq i \leq 4}$. We identify $\P(V^*)$ with 
the projective space $\P^3$ using the ordered basis
$\{e_i\}_{1 \leq i \leq 4}$.
We express the defining relations of $A$ as a product of the form
$M {\bf x} = {\bf 0}$, where $M$ is a $6 \times 4$ matrix with 
entries in $V$, and ${\bf x}$ is the column vector
${\bf x} = (x_1, x_2, x_3, x_4)^T$.
We consider the fifteen $4 \times 4$ minors of $M$ as homogeneous 
polynomials in the symmetric algebra on $V$. Identifying $V$ with 
$V^{**}$ in the usual way, the point scheme of $A$ is the zero locus 
in $\mathbb{P}(V^*)$ of the $4 \times 4$ minors of $M$. In general, 
if $X$ is a set of polynomials, we will write $\mathcal{V}(X)$ for 
the zero set of $X$. 

The third author has written a function contained in the {\it Macaulay2}
\cite{Mac2} package 
{\it AssociativeAlgebras} called {\tt pointScheme} to aid in the computation of 
the point scheme; the output of this function depends on the presentation of the 
algebra. To compute the automorphism $\t: E \to E$, let $p \in E$ be a closed 
point. The matrix, $M(p)$, obtained by evaluating $M$ at $p$ (note 
that this matrix is well-defined up to a scalar) has rank three and
$\t(p)$ is the kernel of $M(p)$.

Next we discuss the construction of the line scheme of $A$.
\begin{definition}\label{def of line module}
A graded right $A$-module $M$ is a \emph{line module} if $M$ is cyclic,
generated in degree $0$ and the Hilbert series of $M$ is
$H_M(t) = (1-t)^{-2}$.  As before, the isomorphism class of a line
module over $A$ is called a \emph{line of $A$}.
\end{definition}


In \cite[Remark 2.10]{SVline1}, it is shown that if $A$ is a quadratic, 
noetherian, Auslander-regular algebra of global dimension four which satisfies 
the Cohen-Macaulay property with respect to GK-dimension, then there is a 
projective scheme, the \emph{line scheme}, that parametrizes isomorphism classes 
of line modules, and hence lines of $A$. As noted above, the algebras $R$, $S$ 
and $T$ satisfy these properties.

We determine line schemes using the method introduced in \cite{SVline2}. For a graded  algebra
$A$ with four generators $\{x_1, x_2, x_3, x_4\}$ in degree one  and six quadratic relations $\{r_j: j=1, \dots,6\}$ one forms the Koszul dual $A^!$ using the dual basis 
$\{z_1, z_2, z_3, z_4 \}$ of the dual of the vector space $A_1 = \sum \k x_i$ and the ten defining relations $\{f_i: i = 1, \dots, 10 \}$ of $A^!$ so that $f_i(r_j) =0$ for all $i,j$. The defining relations of $A^!$ can be expressed as the matrix equation $\hat{M}{\bf z} = {\bf 0}$ where $\hat{M}$ is a $10 \times 4$ matrix with entries that are linear forms in the $\{z_i\}$ and ${\bf z}$ is the column vector ${\bf z}= (z_1, z_2, z_3, z_4)^T$.

Consider the polynomial ring $\k[u_1, \dots, u_4, v_1, \dots v_4]$. A $10 \times 8$ matrix $\mathcal{M} = [\hat{M}_u \hat{M}_v]$ is obtained by horizontally concatenating the $10 \times 4$ matrices $\hat{M}_u$ and $\hat{M}_v$, where $\hat{M}_u$ is formed from $\hat{M}$ by replacing each variable $z_i$ with the variable $u_i$, and $\hat{M}_v$ is formed from $\hat{M}$ by replacing each variable  $z_i$ with the variable $v_i$.

The forty-five $8 \times 8$ minors of $\mathcal{M}$ are each linear combinations of polynomials of the
form $N_{ij} = u_iv_j- u_jv_i$, where $1 \leq i < j \leq 4$, giving forty-five quartic polynomials in the six variables $N_{12}, N_{13}, N_{14}, N_{23}, N_{24}, N_{34}$. Next, to these polynomials apply the orthogonality isomorphism described in \cite[Section 1]{SVline2}:
$$\begin{array}{ccc}
N_{12} \mapsto M_{34} & N_{13} \mapsto -M_{24} & N_{14} \mapsto M_{23} \\
N_{23} \mapsto M_{14} & N_{24} \mapsto -M_{13} & N_{34} \mapsto M_{12}.
\end{array}$$
 This process produces forty-five polynomials in the Pl\"{u}cker coordinates $M_{12}, M_{13},$ $ M_{14}, M_{23}, M_{24}, M_{34}$ on $\mathbb{P}^5$.  The line scheme $\mathcal{L}(A)$ is the scheme of zeros in $\mathbb{P}^5$ of these forty-five quartic polynomials along with the Pl\"{u}cker polynomial $P = M_{12}M_{34} - M_{13} M_{24} + M_{14}M_{23}$. 

For each of the algebras, $R$, $S$ and $T$, we will determine the set of closed points and the decompositions of their line schemes into irreducible components. 
The third author has written a {\it Macaulay2} function {\tt lineSchemeFourDim} 
that takes as input a presentation of a graded algebra $A$ with four degree one generators and six quadratic relations, and outputs the ideal of the polynomial ring in the variables $M_{ij}$ generated by the forty-five quartic polynomials described above and the Pl\"ucker polynomial. This output depends on the presentation. 

 Next we describe the lines in $\mathbb{P}^3$ that are parameterized by the line scheme $\mathcal{L}(A)$.  We write ${\mathbb G} = {\mathbb G}(1, \P^3)$ for the Grassmannian scheme parametrizing lines in $\P(V^*) \cong \P^3$. The line through two distinct points $p = [a_1:a_2:a_3:a_4]$ and $q = [b_1:b_2:b_3:b_4]$ in $\mathbb{P}^3$ will be denoted by $\ell(p,q)$. This line is the projectivization of the row space of the (rank two) $2 \times 4$ matrix
$\left[\begin{smallmatrix}
a_1 & a_2 & a_3 & a_4\\
b_1 & b_2 & b_3 & b_4
\end{smallmatrix}\right].$
 We will abuse notation and also denote the line through $p$ and $q$ by this matrix. 

 The evaluation of a Pl\"{u}cker coordinate $M_{ij}$ on $\ell = \ell(p,q)$ is defined to be $a_ib_j-a_jb_i$.  The map $\pi: {\mathbb G} \to \P^5$, $\ell \mapsto [M_{12}(\ell):M_{13}(\ell):M_{14}(\ell):M_{23}(\ell):M_{24}(\ell):M_{34}(\ell)]$ is well known to give an isomorphism from ${\mathbb G}$ to ${\mathcal V}(P)$, where $P$ is the Pl\"ucker polynomial. 

Given a line $\ell \in {\mathbb G}$, define a right $A$-module by $M(\ell) = A/\ell^{\perp} A$. Then $M(\ell)$ is a line module of $A$ precisely when $\ell \in  \pi^{-1}(\mathcal{L}(A))$.

 In the following subsections we will compute the irreducible components of the line schemes of $R$, $S$ and $T$. If $C$ is such a component, we will determine the set of lines of $\P^3$ in the preimage $\pi^{-1}(C)$. We establish some notation to describe these subschemes of $\mathbb{G}$. If $F = F(x_1, x_2, x_3, x_4)$ is a linear polynomial, so that $\mathcal{V}(F)$ is a plane in $\P^3$, then $\mathcal{H}(F)$ will denote the subscheme of $\mathbb{G}$ whose closed points are all of the lines contained in the plane $\mathcal{V}(F)$. If $Y$ and $Z$ are disjoint subschemes of $\P^3$, we let $\mathcal{J}(X,Y)$ denote the subscheme of $\mathbb{G}$ whose closed points are the lines joining points of $Y$ and $Z$. If $p \in \P^3$ is a point and $H \subset \P^3$ is a plane, the pencil of lines through $p$ in the plane $H$ will be denoted by $\Sigma_{p,H}$.

Finally, we will study some incidence geometry. Let $A$ be any of the algebras $R$, $S$ or $T$. For each closed point $p$ in the point scheme of $A$, let $\mathcal{L}(p)$ denote the subscheme of $\mathbb{G}$ whose closed points consist of the lines in $\pi^{-1}(\mathcal{L}(A))$ that contain $p$. Also, for any line $\ell \subset \P^3$, let $\mathcal{P}(\ell)$ denote the subscheme of $\P^3$ whose closed points consist of all $p \in \mathcal{P}(A)$ that lie on $\ell$. We determine:
\begin{itemize} 
\item[(1)] for each point $p \in \mathcal{P}(A)$, the set of closed points in $\mathcal{L}(p)$; 
\item[(2)] for each line $\ell \in \pi^{-1}(\mathcal{L}(A))$, the set of closed points of $\mathcal{P}(\ell)$.
\end{itemize}


\subsection{The point and line schemes of \texorpdfstring{$R$}{R}}
In this subsection we study the point and line schemes of the algebra $R$.

\begin{notation}\label{notation for geometry of R}
    The geometry in $\P(V^*)$ associated to the algebra $R$ can be succinctly described in terms of four hyperplanes. Let $$H_1 = \mathcal{V}(x_1-x_2), \quad H_2 = \mathcal{V}(x_1+x_2), \quad H_3 = \mathcal{V}(x_3-x_4), \quad H_4 = \mathcal{V}(x_3+x_4).$$
    These four hyperplanes pairwise intersect in six lines. For $1 \leq i < j \leq 4$, let $E_{i,j}$ denote the line $H_i \cap H_j$. It is easy to show that
\begin{eqnarray*}
&&E_{1,2} = \mathcal{V}(x_1,x_2) = \{[0:0:a:b] : [a:b] \in \P^1\}, \\
&&E_{3,4} = \mathcal{V}(x_3,x_4) = \{[a:b:0:0] : [a:b] \in \P^1\}, \\
&&E_{1,3} = \mathcal{V}(x_1-x_2,x_3-x_4) = \{[a:a:b:b] : [a:b] \in \P^1\}, \\
&&E_{2,4} = \mathcal{V}(x_1+x_2,x_3+x_4) = \{[a:-a:b:-b] : [a:b] \in \P^1\}, \\
&&E_{2,3} = \mathcal{V}(x_1+x_2,x_3-x_4) = \{[a:-a:b:b] : [a:b] \in \P^1\}, \\
&&E_{1,4} = \mathcal{V}(x_1-x_2,x_3+x_4) = \{[a:a:b:-b] : [a:b] \in \P^1\}. \\ 
\end{eqnarray*}
The nonempty triple intersections of these hyperplanes determine four 
points.  Let 
$$p_1 = [1:-1:0:0], \quad p_2 = [1:1:0:0], \quad
  p_3 = [0:0:1:-1], \quad p_4 = [0:0:1:1].$$
Indeed, if $\{i,j,k,l\} = \{1,2,3,4\}$, then $H_i \cap H_j \cap H_k = \{p_l\}$.
\end{notation}
The proof of the following theorem is a computation using the aforementioned {\it Macaulay2} functions, but the computations have also been checked by hand. We omit most of the details in the proof. 

\begin{theorem} \label{point schemes, automorphisms of R}
Using the presentation for $R$ with generators $x_1, x_2, x_3, x_4$ and relations: $x_1x_2+x_2x_1$, $x_1x_3+x_4x_2$, $x_1x_4-x_3x_2$, $x_2x_3-x_4x_1$, $x_2x_4+x_3x_1$, $x_3x_4+x_4x_3$, the irreducible components of the point scheme for $R$ are the six lines $E_{i,j}$, $1 \leq i < j \leq 4$, the pairwise intersections of the hyperplanes $H_k$, $1 \leq k \leq 4$.
The automorphism $\tau: \mathcal{P}(R) \to \mathcal{P}(R)$ is given by:
\begin{eqnarray*}
&&\tau: E_{1,2} \rightarrow E_{1,2}: \tau([0:0:a:b]) = [0:0:a:-b], \\
&&\tau: E_{3,4} \rightarrow E_{3,4}: \tau([a:b:0:0]) = [a:-b:0:0], \\
&&\tau: E_{1,3} \rightarrow E_{2,4}: \tau([a:a:b:b]) = [a:-a:b:-b], \\
&&\tau: E_{2,4} \rightarrow E_{1,3}: \tau([a:-a:b:-b]) = [a:a:b:b], \\
&&\tau: E_{2,3} \rightarrow E_{1,4}: \tau([a:-a:b:b]) = [a:a:-b:b], \\
&&\tau: E_{1,4} \rightarrow E_{2,3}: \tau([a:a:b:-b]) = [a:-a:-b:-b].
\end{eqnarray*}
\noindent The scheme ${\mathcal P}(R)$ is equidimensional of dimension one and is a reduced scheme. 
\end{theorem}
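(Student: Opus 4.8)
The plan is to follow the recipe laid out in Section 4.1 for computing the point scheme, using the presentation of $R$ with relations $x_1x_2+x_2x_1$, $x_1x_3+x_4x_2$, $x_1x_4-x_3x_2$, $x_2x_3-x_4x_1$, $x_2x_4+x_3x_1$, $x_3x_4+x_4x_3$. First I would write the relations in the form $M\mathbf{x} = \mathbf{0}$ for the $6\times 4$ matrix $M$ with linear entries in $V$, then extract the fifteen $4\times 4$ minors of $M$ and compute the ideal they generate in the symmetric algebra $\k[x_1,x_2,x_3,x_4]$. The claim that the point scheme has the six lines $E_{i,j}$ as its irreducible components amounts to showing that the radical of this minor ideal equals $\bigcap_{1\le i<j\le 4}(\text{ideal of }E_{i,j})$, or equivalently a primary decomposition computation; this is exactly what the {\tt pointScheme} function in {\it AssociativeAlgebras} carries out, and it can be verified by hand since each $E_{i,j}$ is cut out by two of the four linear forms $x_1\pm x_2$, $x_3\pm x_4$. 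Checking that each $E_{i,j}$ genuinely lies in $\mathcal{V}(\text{minors})$ is a direct substitution; checking there is nothing else requires the ideal-theoretic argument. Once the components are identified, the automorphism $\tau$ is computed pointwise: for a generic closed point $p$ on $E_{i,j}$, evaluate $M$ at $p$, observe it has rank three, and compute its kernel, which gives the stated formulas $\tau(p)$; the identification of which $E_{i,j}$ maps to which then follows by inspection.

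For the final two assertions — that $\mathcal{P}(R)$ is equidimensional of dimension one and reduced — I would argue as follows. Each $E_{i,j}$ is a projective line, hence irreducible of dimension one, so the union of the six is equidimensional of dimension one; this is immediate once the component decomposition is established. For reducedness, I would invoke Lemma \ref{checking reduced on affine cover}: cover $\P^3$ by the four standard affine opens $D_+(x_k)$ and show that on each chart the coordinate ring of $\mathcal{P}(R)$ is reduced. Concretely, on each chart the minor ideal localizes to an ideal whose radical we have already computed to be the intersection of the ideals of the $E_{i,j}$, and one checks directly (a Gröbner basis / saturation computation in three affine variables, doable by hand or with {\it Macaulay2}) that the minor ideal is already radical on that chart — equivalently, that the six lines meet pairwise transversally (their pairwise intersections are the four points $p_\ell$, at each of which exactly three lines meet, and one verifies the scheme structure at those points is reduced). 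Alternatively, since the $E_{i,j}$ are reduced and meet in finitely many points, it suffices to check the local rings at those four special points are reduced, which is a small local computation.

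The main obstacle I expect is the reducedness verification at the four triple-intersection points $p_1,\dots,p_4$: away from these points the point scheme is a disjoint union of open pieces of the $E_{i,j}$ and reducedness is clear, but at each $p_\ell$ three of the lines $E_{i,j}$ come together, and one must confirm that the minor ideal cuts out the (necessarily non-Cohen-Macaulay, since three coplanar-or-not lines through a point) reduced union rather than some thickening. This is where the explicit computation — either the primary decomposition of the full minor ideal, or the local analysis at each $p_\ell$ — does the real work, and it is the step that genuinely relies on the particular coefficients in the relations of $R$ rather than on general nonsense. Everything else (listing minors, substituting the parametrizations of the $E_{i,j}$, computing kernels of the rank-three matrices $M(p)$ to get $\tau$) is routine linear algebra that the stated {\it Macaulay2} routines handle and that can be spot-checked by hand.
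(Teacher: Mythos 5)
Your proposal follows essentially the same route as the paper: factor the relations as $M\mathbf{x}=\mathbf{0}$, take the fifteen $4\times 4$ minors, compute the minimal primes (the paper does this with the \emph{Macaulay2} functions {\tt pointScheme} and {\tt minimalPrimes}) to identify the six lines $E_{i,j}$, read off equidimensionality, verify reducedness on the standard affine cover via Lemma \ref{checking reduced on affine cover}, and obtain $\tau$ by computing kernels of the rank-three matrices $M(p)$. The only quibble is your parenthetical that the union of three concurrent non-coplanar lines is ``necessarily non-Cohen-Macaulay'' --- any reduced one-dimensional scheme is Cohen--Macaulay --- but this aside plays no role in your argument, whose substance (the radicality check at the four points $p_\ell$ being where the real computation lies) is correct and consistent with the paper.
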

\begin{proof}
With the stated presentation, the matrix given by factoring the relations is:
$$M := \begin{bmatrix} x_2 & x_1 & 0 & 0 \\
                      0 & x_4 & x_1 & 0 \\
                      0 & -x_3 & 0 & x_1 \\
                      -x_4 & 0 & x_2 & 0 \\
                      x_3 & 0 & 0 & x_2 \\
                      0 & 0 & x_4 & x_3 \\
      \end{bmatrix}.
$$ The {\it Macaulay2} function {\tt pointScheme} outputs the fifteen $4 \times 4$ minors of $M$ as an ideal $I$ of the polynomial ring $\Q[x_1, x_2, x_3, x_4]$. Then we use the {\it Macaulay2} function {\tt minimalPrimes} on the ideal $I$ to compute the irreducible components of the zero locus of $I$. We find that these components are precisely the six lines $E_{i,j}$, $1 \leq i < j \leq 4$. It is then clear that $\mathcal{P}(R)$ is equidimensional of dimension one. To prove that the scheme ${\mathcal P}(R)$ is reduced, one can check that the conditions of Lemma \ref{checking reduced on affine cover} hold for the standard affine open cover of $\P^3$, restricted to ${\mathcal P}(R)$. 
\end{proof}

 Next we describe the line scheme of $R$.  

\begin{theorem} \label{line scheme of R}
Using the presentation for $R$ with generators $x_1, x_2, x_3, x_4$ and 
relations: $x_1x_2+x_2x_1$, $x_1x_3+x_4x_2$, $x_1x_4-x_3x_2$, $x_2x_3-x_4x_1$, 
$x_2x_4+x_3x_1$, $x_3x_4+x_4x_3$, the irreducible components of the line scheme 
for $R$ are the following:
$$\begin{array}{ll}
C_1 = \V(M_{12},M_{13}-M_{23},M_{14}-M_{24}), &
C_2 = \V(M_{12},M_{13}+M_{23},M_{14}+M_{24}), \\
C_3 = \V(M_{34},M_{13}-M_{14},M_{23}-M_{24}), &
C_4 = \V(M_{34},M_{13}+M_{14},M_{23}+M_{24}), \\
C_5 = \V(M_{12},M_{34},P), &
C_6 = \V(M_{14}-M_{23},M_{13}-M_{24},P), \\
C_7 = \V(M_{14}+M_{23},M_{13}+M_{24},P),
\end{array}$$
where $P$ denotes the Pl\"ucker polynomial.
Components $C_1, C_2, C_3, C_4$ are isomorphic to $\P^2$; components $C_5, C_6, C_7$ are isomorphic to $\P^1 \times \P^1$. The scheme $\mathcal{L}(R)$ is equidimensional of dimension two and is a reduced scheme. 
\end{theorem}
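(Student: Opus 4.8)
The plan is to carry out the line-scheme construction described above (following \cite{SVline2}) for the stated presentation of $R$, and then to analyze the resulting ideal. Concretely, I would first compute the Koszul dual $R^!$, write its ten relations as $\hat M\mathbf z=\mathbf 0$ with $\hat M$ a $10\times 4$ matrix of linear forms in $z_1,\dots,z_4$, form $\mathcal M=[\hat M_u\ \hat M_v]$, take the forty-five $8\times 8$ minors, express them in the variables $N_{ij}=u_iv_j-u_jv_i$, apply the orthogonality substitution $N_{12}\mapsto M_{34}$, $N_{13}\mapsto -M_{24}$, and so on, and adjoin the Pl\"ucker polynomial $P$. This produces an ideal $L\subseteq\k[M_{12},M_{13},M_{14},M_{23},M_{24},M_{34}]$ with $\mathcal L(R)=\V(L)$; the computation is mechanical and is performed by the {\it Macaulay2} function {\tt lineSchemeFourDim}, and can be verified by hand.

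Next I would apply {\tt minimalPrimes} (equivalently {\tt decompose}) to $L$ and check that its minimal primes are exactly the seven ideals defining $C_1,\dots,C_7$ in the statement. The geometric identifications are then immediate from the generators: each of $C_1,C_2,C_3,C_4$ is the zero locus of three linearly independent linear forms on $\P^5$, hence a linearly embedded $\P^2$; each of $C_5,C_6,C_7$ is cut out by two independent linear forms together with $P$, and eliminating the two linear relations from $P$ leaves a nondegenerate (rank four) quadratic form on the residual $\P^3$ --- for example $P|_{C_5}=-M_{13}M_{24}+M_{14}M_{23}$ and $P|_{C_6}=M_{12}M_{34}+M_{23}^2-M_{24}^2$ --- so each of these three components is a smooth quadric surface and therefore isomorphic to $\P^1\times\P^1$ via the Segre map. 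Since all seven $C_i$ are two-dimensional, $\mathcal L(R)$ is equidimensional of dimension two.

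Finally, to prove that $\mathcal L(R)$ is reduced it suffices to show that $L$ is radical, i.e.\ that $L=\bigcap_{i=1}^7 I(C_i)$; I would verify this equality directly in {\it Macaulay2}, or else invoke \Cref{checking reduced on affine cover}, checking on each standard affine chart $\{M_{ij}\neq 0\}$ of $\P^5$ that the dehomogenization of $\k[M_{\bullet}]/L$ is a reduced ring (for instance by comparing it to its radical), and concluding since these charts cover $\P^5$. I expect this last point --- confirming that the minimal-prime decomposition is complete and that the line-scheme ideal carries no nonreduced structure --- to be the only genuinely delicate step; the rest is bookkeeping with explicit polynomials.
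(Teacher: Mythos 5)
Your proposal is correct and follows essentially the same route as the paper's proof: compute the line-scheme ideal with {\tt lineSchemeFourDim}, decompose it via {\tt minimalPrimes} into the seven stated components, identify $C_1,\dots,C_4$ as linear $\P^2$'s and $C_5,C_6,C_7$ as smooth quadrics (your explicit restrictions of $P$ are right), and verify reducedness chart-by-chart using Lemma~\ref{checking reduced on affine cover} (or, equivalently, by checking the ideal is radical). No gaps.
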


\begin{proof}
   Let $B = \k[M_{i,j}: 1 \leq i < j \leq 4]$ be the polynomial ring in the Pl\"ucker coordinates. A computation using the aforementioned {\it Macaulay2} functions shows that the minimal primes of the ideal $I$ of $B$ determining the line scheme $\mathcal{L}(R)$ give the stated components $C_i$, $1 \leq i \leq 7$. It is then clear that $C_1$, $C_2$, $C_3$, and $C_4$ are isomorphic to $\P^2$. For $C_5$, $C_6$, and $C_7$, note that modulo the linear relations, the Pl\"ucker polynomial is smooth, which verifies that these components are isomorphic to $\P^1 \times \P^1$. The statement about the dimension of $\mathcal{L}(R)$ follows immediately.
   
   To prove that $\mathcal{L}(R)$ is reduced, let $N = B/I$. Let $U_{i,j} = D_{+}(M_{i, j}) \cong \Spec N_{(M_{i,j})}$, where $N_{(M_{i,j})}$ is the subalgebra of the localization $N_{M_{i,j}}$ of elements of degree $0$. Then we may consider the $U_{i,j}$ as an affine cover of the scheme $\mathcal{L}(R)$. The defining ideal of $N_{M_{i,j}}$ is obtained by dehomogenizing the ideal $I$ by setting $M_{i,j} = 1$. One can check, using {\it Macaulay2}, that for each $i, j$ this ideal is a radical ideal. It follows that $N_{(M_{i,j})}$ is a reduced ring. By Lemma \ref{checking reduced on affine cover}, $\mathcal{L}(R)$ is reduced.
\end{proof}

\begin{theorem}\label{lines in P^3 for R}
The preimages of the components of the line scheme of $R$ under
$\pi: {\mathbb G} \to \P^5$ satisfy:
$\pi^{-1}(C_i) = H_i$ for $1 \leq i \leq 4$, and:
$$\begin{aligned}
    &\pi^{-1}(C_5) = \mathcal{J}(E_{1,2}, E_{3,4}), ~~
     \pi^{-1}(C_6) = \mathcal{J}(E_{2,3}, E_{1,4}), ~~
     \pi^{-1}(C_7) = \mathcal{J}(E_{1,3}, E_{2,4}). \\
\end{aligned}$$
\end{theorem}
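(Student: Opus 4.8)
The plan is to translate, component by component, the defining Pl\"ucker equations of the $C_i$ into incidence statements about lines in $\P^3$, using only the classical dictionary between Pl\"ucker coordinates and incidence. Writing $M_{ji}=-M_{ij}$ and $M_{ii}=0$, the two facts I need are: (i) a line $\ell$ lies in the plane $\V(c_1x_1+c_2x_2+c_3x_3+c_4x_4)$ if and only if $\sum_{k=1}^4 c_k M_{ik}(\ell)=0$ for $i=1,\dots,4$; and (ii) a line $\ell$ meets a fixed line $\ell'$ if and only if $\langle\ell,\ell'\rangle=0$, where $\langle-,-\rangle$ is the symmetric bilinear form on $\wedge^2 V^*$ polarizing the Pl\"ucker form $P$. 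Fact (i) follows at once from the identity $\sum_k c_k M_{ik}(\ell)=a_i(c\cdot b)-b_i(c\cdot a)$ for $\ell=\ell(p,q)$ with $p=[a]$, $q=[b]$ and $a,b$ independent; fact (ii) is standard.

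For the four planar components I would apply (i) with $c=(1,-1,0,0)$, $(1,1,0,0)$, $(0,0,1,-1)$, $(0,0,1,1)$, corresponding to $H_1,\dots,H_4$. In each case the four equations of (i) collapse (one being redundant) to exactly the three linear forms defining $C_i$, so the families $\mathcal H(x_1-x_2),\dots,\mathcal H(x_3+x_4)$ of lines contained in the planes $H_i$ are cut out by those forms inside $\mathbb{G}$. A one-line substitution then shows that these three linear relations already force $P=0$; since $\pi$ is an isomorphism onto $\V(P)$ and each $C_i\subseteq\V(P)$, this yields $\pi^{-1}(C_i)=\mathcal H(F_i)$, i.e.\ $\pi^{-1}(C_i)$ is precisely the family of lines contained in $H_i$, for $1\le i\le 4$.

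For $C_5$, $C_6$, $C_7$ I would use (ii). From the parametrizations of the $E_{i,j}$ in Notation \ref{notation for geometry of R} one computes the Pl\"ucker coordinates of each $E_{i,j}$, hence the linear form $\langle-,E_{i,j}\rangle$ on $\P^5$. One finds $\langle-,E_{1,2}\rangle=M_{12}$ and $\langle-,E_{3,4}\rangle=M_{34}$ up to scalar, so $C_5=\V(M_{12},M_{34},P)$ is precisely the set of lines meeting both $E_{1,2}$ and $E_{3,4}$; likewise $\langle-,E_{2,3}\rangle$ and $\langle-,E_{1,4}\rangle$ span the same linear space as $M_{14}-M_{23}$ and $M_{13}-M_{24}$ (and $\langle-,E_{1,3}\rangle$, $\langle-,E_{2,4}\rangle$ span the same space as $M_{14}+M_{23}$, $M_{13}+M_{24}$), so $C_6$, respectively $C_7$, is the set of lines meeting both $E_{2,3}$ and $E_{1,4}$, respectively both $E_{1,3}$ and $E_{2,4}$. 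In each of the three cases the relevant pair of $E$'s is disjoint --- immediate from their defining equations --- so a line meeting both meets each in a single point and is the join of those two distinct points, and conversely; hence $\pi^{-1}(C_5)=\mathcal J(E_{1,2},E_{3,4})$, $\pi^{-1}(C_6)=\mathcal J(E_{2,3},E_{1,4})$, and $\pi^{-1}(C_7)=\mathcal J(E_{1,3},E_{2,4})$.

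The only place that takes any real care is bookkeeping: fixing the sign and index conventions in the Pl\"ucker pairing, and verifying that the forms $\langle-,E_{i,j}\rangle$ genuinely cut out the same loci as the stated generators of $C_6$ and $C_7$ --- where, unlike for $C_5$, those generators are linear combinations of the two incidence conditions rather than the conditions themselves. A softer alternative for $C_5$, $C_6$, $C_7$ that sidesteps the sign-tracking: substitute the parametrizations of the relevant pairs $E_{i,j}$ directly into the Pl\"ucker formulas to obtain the inclusion $\mathcal J(\cdot,\cdot)\subseteq\pi^{-1}(C_k)$, then note that $\mathcal J$ of two disjoint lines is an irreducible surface and invoke Theorem \ref{line scheme of R} (each $C_k\cong\P^1\times\P^1$ is irreducible of dimension two) to promote the inclusion to an equality; I would likely record this as a remark.
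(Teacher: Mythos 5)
Your argument is correct, and it reaches the same conclusions by a somewhat different route than the paper. For the planar components $C_1,\dots,C_4$ the paper cites the classification of $2$-planes inside the Pl\"ucker quadric (pencils of lines through a point versus all lines in a fixed plane, \cite[Exercise 6.5.11]{Sha}), checks one inclusion by parametrizing lines in $H_i$, and concludes equality from that dichotomy; you instead derive the equality directly from the linear incidence criterion $\sum_k c_k M_{ik}(\ell)=0$, which turns each $C_i$ into an if-and-only-if statement with no appeal to the $\alpha$/$\beta$-plane classification (your identity $\sum_k c_k M_{ik}=a_i(c\cdot b)-b_i(c\cdot a)$ is exactly what is needed, and your observation that the three linear forms already force $P=0$ is a correct consistency check). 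For $C_5, C_6, C_7$ the paper verifies both inclusions by hand: it computes $\pi(\ell(p,q))$ for $p,q$ on the two skew lines, and for the converse shows that the two defining equations force the vanishing of the $2\times 2$ determinants expressing incidence with $E_{2,3}$ and $E_{1,4}$; your use of the polarized Pl\"ucker pairing $\langle -,E_{i,j}\rangle$ packages those determinant checks uniformly, and your span computations (e.g.\ that $\langle -,E_{2,3}\rangle$ and $\langle -,E_{1,4}\rangle$ span the same space as $M_{14}-M_{23}$ and $M_{13}-M_{24}$) do come out as claimed, so the matching of components to pairs of $E$'s agrees with the paper's. The trade-off is that the paper's computations are elementary and self-contained, while your version is shorter and treats all six nonplanar-plus-planar cases by the same two classical facts at the cost of the sign bookkeeping you flag; your fallback for $C_5,C_6,C_7$ (one inclusion plus irreducibility and $\dim C_k=2$ from Theorem \ref{line scheme of R}) is also sound, since $\mathcal{J}$ of two disjoint lines is an irreducible closed surface in $\mathbb{G}$.
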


\begin{proof}
Recall that $C_1 = \mathcal{V}(M_{12}, M_{13} - M_{23}, M_{14}-M_{24})$. It is clear that $C_1$ is isomorphic to a copy of $\P^2$ contained in the Pl\"ucker hypersurface, $\mathcal{V}(P)$, in $\P^5$. It is well known, see \cite[Exercise 6.5.11, p. 81]{Sha} for example, that such $\P^2$s correspond under the Pl\"ucker embedding $\pi: \mathbb{G} \to \P^5$ to either a pencil of lines passing through a common point in $\P^3$, or to the collection of all lines in a common plane in $\P^3$. We show that the latter possibility is the case here. As described above, we represent an arbitrary line in the plane $\mathcal{V}(x_1 - x_2) \subset \P^3$ by $\ell = \left[\begin{smallmatrix} a_1 & a_1 & a_2 & a_3 \\ b_1 & b_1 & b_2 & b_3 \end{smallmatrix}\right]$. Then $\pi(\ell) = [0: a_1b_2-a_2b_1:a_1b_3-a_3b_1: a_1b_2-a_2b_1: a_1b_3-a_3b_1: a_2b_3-a_3b_2]$, so that $\pi(\ell) \in C_1$. It follows that $\pi^{-1}(C_1) = \mathcal{H}(x_1-x_2)$. The proofs for components $C_2$, $C_3$ and $C_4$ are analogous.

Recall that $C_6 = \mathcal{V}(M_{14} - M_{23}, M_{13}-M_{24}, P)$, and $E_{2,3} = \mathcal{V}(x_1+x_2, x_3-x_4)$, $E_{1,4} = \mathcal{V}(x_1-x_2, x_3+x_4)$. As noted above, $E_{2,3}$ and $E_{1,4}$ are disjoint. Let $p = [a: -a: b: b] \in E_{2,3}$ and $q = [c: c: d: -d] \in E_{1,4}$ be arbitrary points. Then the line containing $p$ and $q$ is represented by $\ell = \left[\begin{smallmatrix} a & -a & b & b \\ c & c & d & -d \end{smallmatrix}\right]$. We have $\pi(\ell) = [2ac: ad-bc: -ad-bc: -ad-bc: ad-bc: -2bd]$ and so $\pi(\ell) \in C_6$.

Conversely, suppose that $\ell = \left[\begin{smallmatrix} a & b & c & d \\ e & f & g & h \end{smallmatrix}\right] \in \pi^{-1}(C_6)$. Then we have 
$$\begin{aligned}
&ah-bg+cf-de = 0, \quad 
 ag-bh-ce+df = 0.
\end{aligned}$$
One checks that the line $\ell$ intersects the line $E_{2,3}$ if and only if the 
linear system in variables $(\l, \mu)$ given by
$$\begin{bmatrix} a+b & e+f \\ c-d & g-h \end{bmatrix}
  \begin{bmatrix} \l \\ \mu \end{bmatrix} =
  \begin{bmatrix} 0 \\ 0 \end{bmatrix}$$
has a nontrivial solution. One checks that the determinant of the coefficient 
matrix is zero, so indeed $\ell$ intersects the line $E_{2,3}$. Similarly, one 
also checks that $\ell$ intersects $E_{1,4}$. We conclude that
$\ell \in \mathcal{J}(E_{2,3}, E_{1,4})$. 
Therefore, $\pi^{-1}(C_6) = \mathcal{J}(E_{2,3}, E_{1,4})$. The proofs for 
components $C_5$ and $C_7$ are analogous. 
\end{proof}

 If $p$ is a closed point in $\mathcal{P}(R)$, then there are either three or six $\P^1$s worth of lines in $\pi^{-1}(\mathcal{L}(R))$ passing through $p$. More precisely, we have the following result. 
{\color{blue}

}

 \begin{theorem} \label{lines through points for R}
Let $p \in \mathcal{P}(R)$ be a closed point. Recall the notation established in Notation \ref{notation for geometry of R}. Suppose that $\{i,j,k,l\} = \{1, 2, 3, 4\}$.
\begin{itemize} 
\item[(1)] If $p \in H_{i} \cap H_{j} = E_{i,j}$ for $1 \leq i < j \leq 4$ and $p \notin \{p_1, p_2, p_3, p_4\}$, then $\mathcal{L}(p)$ consists of:
(a) $\Sigma_{p,H_i}$, (b) $\Sigma_{p,H_j}$, and
(c) the lines in $\mathcal{J}(E_{i,j}, E_{k,l})$ containing $p$.

\item[(2)] If $p = p_i$ for $1 \leq i \leq 4$, then $\mathcal{L}(p)$ consists of: (a) $\Sigma_{p_i, H_j}$, (b) $\Sigma_{p_i, H_k}$, (c) $\Sigma_{p_i, H_l}$,
(d) the lines in $\mathcal{J}(E_{j,k}, E_{i,l})$ containing $p_i$,
(e) the lines in $\mathcal{J}(E_{j,l}, E_{i,k})$ containing $p_i$,
(f) the lines in $\mathcal{J}(E_{k,l}, E_{i,j})$ containing $p_i$.

\end{itemize}

\end{theorem}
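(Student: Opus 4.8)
The plan is to combine the explicit description of the point scheme components $E_{i,j}$ and their automorphism $\tau$ from Theorem \ref{point schemes, automorphisms of R} with the explicit preimages $\pi^{-1}(C_m)$ computed in Theorem \ref{lines in P^3 for R}. The key structural fact is that a line $\ell \subset \P^3$ gives a line module $M(\ell)$ exactly when $\pi(\ell) \in \mathcal{L}(R)$, so $\mathcal{L}(p)$ is precisely the set of lines through $p$ lying in $\bigcup_{m=1}^7 \pi^{-1}(C_m)$. Thus the whole proof reduces to: for each type of point $p$, go through the seven families $H_1, H_2, H_3, H_4$ (copies of $\mathcal{H}(x_i \pm x_j)$) and $\mathcal{J}(E_{1,2},E_{3,4})$, $\mathcal{J}(E_{2,3},E_{1,4})$, $\mathcal{J}(E_{1,3},E_{2,4})$, and determine which lines through $p$ belong to each.

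First I would record the elementary incidence facts in $\P^3$: a point $p$ lies on the plane $H_m = \mathcal{V}(x_i - x_j)$ (or $x_i+x_j$) iff the defining linear form vanishes at $p$; when it does, the lines through $p$ lying in $H_m$ form the pencil $\Sigma_{p,H_m}$. For the join families, a line through $p$ meets $\mathcal{J}(E_{a,b},E_{c,d})$ iff it meets both $E_{a,b}$ and $E_{c,d}$; since each $E_{a,b}$ is a line and $p$ is a point, generically there is a unique line through $p$ meeting both (the ``transversal''), but if $p$ itself lies on one of the two lines, say $p \in E_{a,b}$, then every line through $p$ meeting $E_{c,d}$ counts, giving a whole pencil $\Sigma_{p,H}$ for an appropriate plane $H = \langle p, E_{c,d}\rangle$. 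Using Notation \ref{notation for geometry of R}, I would check case by case which of the $H_m$ pass through a given $E_{i,j}$ or $p_i$: for $p \in E_{i,j}$, the hyperplanes through $p$ are exactly $H_i$ and $H_j$ (plus possibly more only at the triple points), which yields parts (1a)–(1b) and (2a)–(2c); the join families then give the remaining parts (1c) and (2d)–(2f).

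For part (1), fix $p \in E_{i,j}$ with $p \notin \{p_1,\dots,p_4\}$. Then $p$ lies on exactly the two hyperplanes $H_i, H_j$, contributing the two pencils $\Sigma_{p,H_i}$ and $\Sigma_{p,H_j}$. Among the three join components, $\{E_{1,2},E_{3,4}\}$, $\{E_{2,3},E_{1,4}\}$, $\{E_{1,3},E_{2,4}\}$, I claim exactly one pair is $\{E_{i,j}, E_{k,l}\}$ — indeed the six lines $E_{i,j}$ pair up into three ``complementary'' pairs $\{E_{1,2},E_{3,4}\}$, $\{E_{1,3},E_{2,4}\}$, $\{E_{1,4},E_{2,3}\}$, and these are exactly the three join families. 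Since $p \in E_{i,j}$, the family $\mathcal{J}(E_{i,j},E_{k,l})$ through $p$ consists of all lines through $p$ meeting $E_{k,l}$ — but $E_{k,l}$ is disjoint from $p$, so this is again a pencil (the cone over $E_{k,l}$ with vertex $p$ restricted to lines), giving (1c); and for the other two join families $p$ lies on neither defining line, so there is a unique transversal through $p$, which I would check is already contained in one of $\Sigma_{p,H_i} \cup \Sigma_{p,H_j}$ (hence contributes nothing new) — this verification is the one genuinely fiddly point. For part (2), $p_i$ lies on the three hyperplanes $H_j, H_k, H_l$ (since $H_j \cap H_k \cap H_l = \{p_i\}$ by Notation \ref{notation for geometry of R}), giving three pencils; and $p_i$ lies on exactly the three lines $E_{i,j}, E_{i,k}, E_{i,l}$, so each of the three join families $\mathcal{J}(E_{j,k},E_{i,l})$, $\mathcal{J}(E_{j,l},E_{i,k})$, $\mathcal{J}(E_{k,l},E_{i,j})$ contains $p_i$ on its second component and thus contributes the lines through $p_i$ meeting the first component, giving (2d)–(2f).

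The main obstacle I anticipate is the bookkeeping in part (1c) and the claim that the ``extra'' transversal lines through $p$ from the non-matching join families do not produce additional components of $\mathcal{L}(p)$ beyond (1a)–(1c): one must verify that for $p \in E_{i,j}$ generic, the unique line through $p$ meeting $E_{a,b}$ and $E_{c,d}$ (for $\{a,b\},\{c,d\}$ the other two complementary pairs) actually lies in $H_i$ or $H_j$. This is a short linear-algebra computation with the explicit parametrizations of the $E$'s in Notation \ref{notation for geometry of R}, but it must be done for each of the six choices of $\{i,j\}$; alternatively, one can invoke the $\tau$-equivariance and the symmetry of the configuration to reduce to one or two representative cases. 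I would also double-check the edge behavior — that as $p$ specializes toward a triple point $p_i$, the pencil in (1c) degenerates compatibly with (2), which serves as a consistency check rather than a logical necessity. Everything else is direct substitution into the component equations $C_m$ from Theorem \ref{line scheme of R} via the Pl\"ucker map.
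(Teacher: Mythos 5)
Your approach is the same as the paper's: the proof is exactly the case-by-case sorting of the lines through $p$ among the seven families $\pi^{-1}(C_1),\dots,\pi^{-1}(C_7)$ described in Theorem \ref{lines in P^3 for R}, using the incidences recorded in Notation \ref{notation for geometry of R} (the paper's proof is literally a citation of those two results). Your outline is correct, and the step you flag as fiddly in part (1) goes through cleanly and uniformly, with no need for case-by-case parametrizations: for $p \in E_{i,j}$ not a triple point, any plane containing $E_{i,k}$ has the form $\mathcal{V}(\alpha f_i + \beta f_k)$, where $f_m$ is the linear form cutting out $H_m$; since $f_i(p)=0$ and $f_k(p)\neq 0$, the plane spanned by $p$ and $E_{i,k}$ is $H_i$, and likewise the plane spanned by $p$ and $E_{j,l}$ is $H_j$. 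Hence the unique transversal from $p$ to either other complementary pair ($\{E_{i,k},E_{j,l}\}$ or $\{E_{i,l},E_{j,k}\}$) is $H_i \cap H_j = E_{i,j}$ itself, which already lies in $\Sigma_{p,H_i}$, so those families indeed contribute nothing beyond (1a)--(1c).

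One slip to correct in part (2): since $p_i \notin H_i$, the point $p_i$ lies on the three lines $E_{j,k}$, $E_{j,l}$, $E_{k,l}$ and on none of $E_{i,j}$, $E_{i,k}$, $E_{i,l}$ --- the opposite of what you wrote (your claim would force $p_i \in H_i$, contradicting your own correct observation that $H_j \cap H_k \cap H_l = \{p_i\}$). The conclusion is unaffected: in each of the three join families exactly one of the two defining lines passes through $p_i$ (the first component in the labeling of the theorem, not the second), so the lines of that family through $p_i$ are precisely the lines joining $p_i$ to points of the other defining line, which is what (2d)--(2f) assert.
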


\begin{proof}
This follows from the observations in Notation
\ref{notation for geometry of R} and Theorem \ref{lines in P^3 for R}.
\end{proof}

 Finally, consider any line $\ell \in \pi^{-1}(\mathcal{L}(R))$. To conclude the study of the geometry of $R$, we determine how many points $p \in \mathcal{P}(R)$ lie on $\ell$. Of course, if $\ell = E_{i,j}$ for some $1 \leq i < j \leq 4$, then every point on $\ell$ is in $\mathcal{P}(R)$. 

\begin{theorem}\label{points on lines for R}
Let $\ell \in \pi^{-1}(\mathcal{L}(R))$ and $\ell \ne E_{i, j}$ for all $1 \leq i < j \leq 4$. Suppose that $\{i,j,k,l\} = \{1, 2, 3, 4\}$.
\begin{itemize}
    \item[(1)] If $\ell \subset H_i$, then $\ell \cap \mathcal{P}(R)$ consists of either two or three distinct closed points. In the case of two points, one of the points is $p_j$, $p_k$ or $p_l$ while the other point is none of $p_1, p_2, p_3, p_4$. In the case of three points, none of the points are any of $p_1, p_2, p_3, p_4$.
    \item[(2)] If $\ell \in \mathcal{J}(E_{i,j}, E_{k, l})$ and $\ell \not\subset \bigcup_{i = 1}^4 H_i$, then $\ell \cap \mathcal{P}(R)$ consists of two distinct points $q_1 \in E_{i,j}$ and $q_2 \in E_{k,l}$. Moreover, $q_1 \notin \{p_k, p_l\}$ and $q_2 \notin \{p_i, p_j\}$.
\end{itemize}
    
\end{theorem}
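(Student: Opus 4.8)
The plan is to leverage the explicit descriptions of $\mathcal{P}(R)$ from Theorem \ref{point schemes, automorphisms of R} (the six lines $E_{i,j}$) together with the description of $\pi^{-1}(\mathcal{L}(R))$ from Theorem \ref{lines in P^3 for R}, and reduce both cases to elementary incidence calculations in $\P^3$. In both parts the strategy is the same: parametrize $\ell$ concretely, intersect with each $E_{i,j}$, and count solutions, keeping careful track of when a solution coincides with one of the triple-intersection points $p_1,\dots,p_4$.

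\emph{Part (1).} Suppose $\ell \subset H_i$ and $\ell \ne E_{i,j}$ for all $j$. Since $H_i$ is a plane containing exactly the three lines $E_{i,j}$, $E_{i,k}$, $E_{i,l}$ among the $E$'s, and any two distinct lines in a plane meet in a single point, $\ell$ meets each of $E_{i,j}, E_{i,k}, E_{i,l}$ in exactly one point (it cannot be contained in any of them by hypothesis). Those three points of intersection are the only points of $\mathcal{P}(R)$ on $\ell$: indeed $E_{m,n}$ with $\{m,n\}\cap\{i\}=\varnothing$ is not contained in $H_i$, so it meets the plane $H_i$ in at most one point, and that point lies on $\ell$ only if it is one of the points already found — in fact one checks directly that $E_{k,l}\cap H_i = \{p_?\}$ is one of the $p$'s lying on $E_{i,j}, E_{i,k}, E_{i,l}$ as well, so no new point arises. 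Now I would analyze coincidences: the three intersection points $\ell\cap E_{i,j}$, $\ell\cap E_{i,k}$, $\ell\cap E_{i,l}$ are pairwise distinct unless $\ell$ passes through a point common to two of these lines. But $E_{i,j}\cap E_{i,k} = H_i\cap H_j\cap H_k = \{p_l\}$ and similarly for the other pairs, so two of the three points collapse precisely when $\ell$ passes through one of $p_j, p_k, p_l$, and all three collapse only if $\ell$ passes through two of them, which forces $\ell$ to be one of the $E$'s — excluded. This yields the dichotomy: generically three distinct points none equal to any $p_m$ (if $\ell$ avoids $p_j,p_k,p_l$; note $\ell$ cannot pass through $p_i$ since $p_i \notin H_i$), and exactly two distinct points when $\ell$ passes through exactly one $p_m$, $m\in\{j,k,l\}$, that one being $p_m$ and the other being some point which is not among $p_1,\dots,p_4$ (again because a second $p$ on $\ell$ would force $\ell\in\{E_{m,n}\}$).

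\emph{Part (2).} Suppose $\ell\in\mathcal{J}(E_{i,j},E_{k,l})$ with $\ell\not\subset\bigcup H_m$. By definition $\ell$ joins a point $q_1\in E_{i,j}$ to a point $q_2\in E_{k,l}$; since $E_{i,j}$ and $E_{k,l}$ are disjoint (they lie in complementary coordinate planes — e.g. $E_{1,2}=\mathcal{V}(x_1,x_2)$ and $E_{3,4}=\mathcal{V}(x_3,x_4)$), the points $q_1,q_2$ are distinct and determined by $\ell$ as $\ell\cap E_{i,j}$ and $\ell\cap E_{k,l}$. I would then show $\ell$ meets no other $E_{m,n}$: any such $E_{m,n}$ shares exactly one plane-coordinate with each of $E_{i,j}$ and $E_{k,l}$, and a short calculation with the parametrizations in Notation \ref{notation for geometry of R} shows $\ell\cap E_{m,n}\ne\varnothing$ would force $\ell$ into one of the hyperplanes $H_m$, contradicting the hypothesis. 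Hence $\ell\cap\mathcal{P}(R)=\{q_1,q_2\}$. Finally, to see $q_1\notin\{p_k,p_l\}$ and $q_2\notin\{p_i,p_j\}$: $p_k$ and $p_l$ lie on $E_{i,j}$ but $p_k = H_i\cap H_j\cap H_l$ (say), and if $q_1=p_k$ then $\ell$ passes through $p_k\in H_l$, and combined with $q_2\in E_{k,l}\subset H_l$ this puts two distinct points of $\ell$ in $H_l$, forcing $\ell\subset H_l$ — excluded.

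\textbf{Main obstacle.} The conceptual structure is routine projective-incidence bookkeeping; the real work is the careful case analysis tracking exactly which $p_m$ can lie on $\ell$ and ruling out spurious intersections with the "wrong" $E_{m,n}$'s — in particular verifying in Part (2) that an intersection with a third line $E_{m,n}$ always forces $\ell$ into one of the $H_m$. I would handle this by writing $\ell = \left[\begin{smallmatrix} a&b&c&d\\ e&f&g&h\end{smallmatrix}\right]$ and systematically using the parametrizations of the $E_{i,j}$ from Notation \ref{notation for geometry of R}, as was done for $C_6$ in the proof of Theorem \ref{lines in P^3 for R}; the determinant-vanishing criterion for "$\ell$ meets a given line" used there is exactly the tool needed, and the computations, while tedious, are purely linear-algebraic.
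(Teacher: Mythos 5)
Your proposal is correct and follows essentially the same route as the paper: both reduce the statement to elementary incidence bookkeeping in $\P^3$ with the six lines $E_{i,j}$ and the four points $p_m$, splitting part (1) into the cases determined by which of $p_j,p_k,p_l$ lie on $\ell$ and handling part (2) by showing any intersection with a third line $E_{m,n}$ would force $\ell$ into one of the hyperplanes $H_m$. The only difference is stylistic — you argue synthetically (two distinct points of $\ell$ in a hyperplane force $\ell$ into it, coincidences occur only at the triple points $p_m$), whereas the paper parametrizes $\ell$ explicitly and uses coordinate computations such as the condition $abcd\neq 0$; your deferred ``short calculation'' in part (2) is indeed routine and closes without issue.
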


\begin{proof}
    For (1), assume that $\ell \subset H_1$. Then $\ell = \mathcal{V}(x_1-x_2, ax_1+bx_2+cx_3+dx_4)$ for some $a, b, c, d \in \k$. We consider four cases: (i) $p_2 \in \ell$, (ii) $p_3 \in \ell$, (iii) $p_4 \in \ell$, (iv) $p_2, p_3, p_4 \not\in \ell$. Suppose (i). Then it follows that $b = -a$ and so $\ell = \mathcal{V}(x_1-x_2, cx_3+dx_4)$. Since $\ell(p_2, p_3) = E_{1,4}$ and $\ell(p_2, p_4) = E_{1,3}$ our assumption that $\ell \ne E_{i,j}$ ensures that $p_3, p_4 \not\in \ell$. It follows that $d \ne \pm c$. The lines $E_{1,2}$, $E_{1,3}$, $E_{1,4}$ are contained in the plane $H_1$, so, by Bezout's theorem, $\ell$ will meet each of these lines in one point. Note that $p_2 \in E_{1,3} \cap E_{1,4}$, so we only need to consider $\ell \cap E_{1,2}$. Note that $\ell \cap E_{1,2} = \{[0:0:d:-c]\}$ and the condition $d \ne \pm c$ ensures that this point is not $p_3$ or $p_4$. We conclude that $\ell \cap \mathcal{P}(R) = \{p_2, [0:0:d:-c]\}$. The cases (ii), (iii) lead to a similar conclusion. Now suppose (iv). It is easy to check that the lines $E_{2,3}$, $E_{2,4}$, $E_{3,4}$ meet the plane $H_1$ in the points $p_4$, $p_3$, $p_2$, respectively. So $\ell$ does not intersect any of these lines. By Bezout's theorem, $\ell \cap (E_{1,2} \cup E_{1,3} \cup E_{1,4}) = \{q_1, q_2, q_3\}$ for some not necessarily distinct points $q_i$. However, if two of these points coincide, then some $q_i$ lies on two of these lines. The three pairwise intersections of $E_{1,2}, E_{1,3}, E_{1,4}$ are the points $p_2, p_3, p_4$, so we contradict the assumption that $p_2, p_3, p_4 \not\in \ell$. We conclude that $\ell \cap \mathcal{P}(R) = \{q_1, q_2, q_3\}$ and that these points are distinct. The proof of (1) for the cases when $\ell \subset H_i$ for $i = 2, 3, 4$ are similar. 

    Now we prove (2). Suppose that $\ell \in \mathcal{J}(E_{1,3}, E_{2,4})$ and $\ell \not\subset \bigcup_{i = 1}^4 H_i$. Write $\ell = \ell(q_1, q_2)$, where $q_1 = [a:a:b:b] \in E_{1,3}$, $q_2 = [c:-c:d:-d] \in E_{2,4}$ for some $a, b, c, d \in \k$. Then $\ell = \mathcal{V}(d(x_1-x_2)-c(x_3-x_4), b(x_1+x_2)-a(x_3+x_4))$. Note that if $abcd = 0$, then $\ell$ is contained in at least one of the $H_i$, contrary to assumption, hence $abcd \ne 0$. We now consider the intersection of $\ell$ with the lines $E_{1,2}$, $E_{1,4}$, $E_{2,3}$, $E_{3,4}$. If $[0:0:e:f] \in E_{1,2}$ is also in $\ell$, then $c(e-f) = 0$ and $a(e+f) = 0$. Since $ac \ne 0$, we have $\ell \cap E_{1,2} = \emptyset$. Similarly, if $[e:e:f:-f] \in E_{1,4}$ is also in $\ell$, then $2cf = 0$ and $2be = 0$. Since $bc \ne 0$, we have $\ell \cap E_{1,4} = \emptyset$. Similar arguments show that $\ell$ does not intersect $E_{2,3}$ or $E_{3,4}$. We conclude that $\ell \cap \mathcal{P}(R) = \{q_1, q_2\}$ and observe that $q_1 \not\in \{p_2, p_4\}$ and $q_1 \not\in \{p_1, p_3\}$. The proof of (2) for the cases when $\ell \in \mathcal{J}(E_{1,2}, E_{3,4}) \cup \mathcal{J}(E_{1,4}, E_{2,3})$ is similar. 
\end{proof}

\begin{remark}\label{Michaela example}
The algebra $R$ is graded by the Klein 4-group $V$. Recall that $H^2(V, \k) = \la [\mu] \ra \cong \Z/2$, where $\mu: V \times V \to \k$ is a certain 2-cocycle. Let $R' = R^{\mu}$ be the twist of $R$ by $\mu$; see \cite{Davies} for more details about this construction. The algebra $R'$ can be presented as:
$$\frac{\k \la x_1, x_2, x_3, x_4 \ra}{\la x_1x_4-x_2x_3,
x_1x_3+x_3x_1,
x_2x_1-x_3x_4,
x_3x_2+x_4x_1,
x_1x_2+x_4x_3,
x_2^2+x_4^2\ra}.$$ 
Since $R$ is AS regular, \cite{Davies} shows that $R'$ is also AS regular. One can check that the point scheme of $R'$ is the union of the line $\mathcal{V}(x_2, x_4)$ and the two planes $\mathcal{V}(x_1+\i x_3)$ and $\mathcal{V}(x_1-\i x_3)$. So the point scheme of $R'$ is the union of a line and a rank two quadric in $\P^3$. 
\end{remark}
\subsection{The point and line schemes of \texorpdfstring{$S$}{S} and \texorpdfstring{$T$}{T}}
 In this subsection we compute the point and line schemes of the algebras $S$ and $T$.  The results and proofs are very similar for these algebras.  Throughout, define the following points of $\P^3$:
 $$e_1 = [1: 0: 0: 0], e_2 = [0: 1: 0: 0], e_3 = [0: 0: 1: 0], e_4 = [0: 0: 0:1].$$ Further, for $0 \leq j,k \leq 3$, we also define the following points of $\P^3$,
 where $\z$ denotes a primitive eighth root of unity in $\k$ and $\i = \z^2$:
$$p_{j,k} = [1:\i^j:\i^k:\i^{-j-k}], \qquad q_{j,k} = [1:\i^j:\z\i^k:\z^3\i^{-j-k}].$$
Note that for $p_{j,k}$ (respectively $q_{j,k}$), the last coordinate is the
(negative of) the inverse of the product of the second and third coordinates.

\begin{theorem} \label{point schemes, automorphisms of S, T}
The point scheme and automorphism $\tau$ for $S$ and $T$ are as 
follows. 
\begin{itemize}
    \item [(1)] Using the presentation of $S$ with generators 
$x_1,x_2,x_3,x_4$ and relations $x_1x_2-x_3^2,x_2x_1-x_4^2,
x_1x_3-x_2x_4,x_2x_3-x_3x_1,x_1x_4-x_4x_2,x_3x_2-x_4x_1$,
the point scheme of $S$ is reduced and consists of twenty 
distinct closed points: 
the $e_i$, $1 \leq i \leq 4$ and $p_{j,k}$ for
$0 \leq j,k \leq 3$ as defined above.

The automorphism $\t: \mathcal{P}(S) \to \mathcal{P}(S)$ is given by:
$\t$ fixes $e_1, e_2$ and interchanges $e_3$ and $e_4$; and
$\t(p_{j,k}) = p_{2k-j,k-j}$, where the subscripts are read
modulo four.  More explicitly, one has
$$\t[1: b: c: b^{-1}c^{-1}] = [1 : b^3c^2 : b^3c : b^2c].$$ 
Hence $\t$ fixes $[1: 1: 1: 1]$ and $[1: 1: -1: -1]$, $\t$ interchanges
$[1:  -1: 1: -1]$ and $[1: -1: -1: 1]$. The remaining twelve points 
partition into three orbits of size four. 

\item [(2)] Using the presentation of $T$ with generators 
$x_1,x_2,x_3,x_4$ and relations $x_1x_2-x_3^2,x_2x_1+x_4^2,
x_1x_3-x_2x_4,x_2x_3-x_3x_1,x_1x_4+x_4x_2,x_3x_2-x_4x_1$,
the point scheme of $T$ is reduced and consists of twenty 
distinct closed points: 
the $e_i$, $1 \leq i \leq 4$ and $q_{j,k}$ for
$0 \leq j,k \leq 3$ as defined above.

The automorphism $\t: \mathcal{P}(T) \to \mathcal{P}(T)$ is given by:
$\t$ fixes $e_1, e_2$ and interchanges $e_3$ and $e_4$; and
$\t(q_{j,k}) = q_{2k+1-j,k-j}$, where the subscripts are read 
modulo four.  More explicitly, one has
$$\t[1: b: c: -b^{-1}c^{-1}] = [1 : b^3c^2 : b^3c : b^2c].$$ 
The points $q_{j,k}$ partition into four orbits of size four.

\end{itemize}
\end{theorem}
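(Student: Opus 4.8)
The plan is to follow the computational method for point schemes described above, in the paragraph following Definition \ref{def of point module}, and to organize the computation so that the somewhat intricate combinatorics of the twenty points and the automorphism $\tau$ can be verified cleanly. First, for the algebra $S$, I would form the $6 \times 4$ matrix $M$ with entries in $V = \sum \k x_i$ obtained by factoring the defining relations (in the presentation stated in part (1)) as $M{\bf x} = {\bf 0}$. The point scheme $\mathcal{P}(S)$ is then $\mathcal{V}(X)$, where $X$ is the set of fifteen $4 \times 4$ minors of $M$. I would run the \texttt{Macaulay2} function \texttt{pointScheme} on this presentation to obtain the ideal $I$ generated by these minors, and then use \texttt{minimalPrimes} (or \texttt{decompose}) to find its irreducible components, expecting to recover exactly the twenty listed points $e_1, e_2, e_3, e_4$ and $p_{j,k}$ for $0 \le j,k \le 3$. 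That these are genuinely twenty \emph{distinct} points is immediate from their explicit coordinates, since $\i = \z^2$ is a primitive fourth root of unity so the pairs $(\i^j, \i^k)$ are pairwise distinct for $0 \le j,k \le 3$.

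To show $\mathcal{P}(S)$ is reduced, I would invoke Lemma \ref{checking reduced on affine cover}: restrict the standard affine cover of $\P^3$ to $\mathcal{P}(S)$, and check with \texttt{Macaulay2} that the dehomogenized ideal in each chart is radical. Since $\mathcal{P}(S)$ is a finite set of points, an equivalent and slightly cleaner route is to verify that for each point $p$ in the list the $4 \times 4$ minors cut out that point with multiplicity one locally — e.g. by checking that $\dim_{\k} (\mathcal{O}_{S,p}) = 1$, or that $I$ localized at the maximal ideal of $p$ equals that maximal ideal; summing the local lengths should give $20$, matching the degree bound of twenty points (counted with multiplicity) from Van den Bergh's result quoted in the introduction, which also confirms that no points have been missed.

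Next I would compute the automorphism $\tau$. For a closed point $p \in \mathcal{P}(S)$, the matrix $M(p)$ obtained by evaluating $M$ at $p$ has rank three, and $\tau(p) = \ker M(p) \in \P^3$. I would carry this out symbolically: for $p = e_1, \dots, e_4$ one computes $\ker M(e_i)$ directly and checks $\tau$ fixes $e_1, e_2$ and swaps $e_3, e_4$; for $p_{j,k} = [1:\i^j:\i^k:\i^{-j-k}]$ one computes $\ker M(p_{j,k})$ and checks it equals $[1:\i^{2k-j}:\i^{k-j}:\i^{-(2k-j)-(k-j)}] = p_{2k-j,\,k-j}$. Writing $b = \i^j$, $c = \i^k$ (so $\i^{-j-k} = b^{-1}c^{-1}$ and $\i^{2k-j} = b^{3}c^{2}$, $\i^{k-j} = b^{3}c$, since $b^4 = c^4 = 1$), this is exactly the formula $\tau[1:b:c:b^{-1}c^{-1}] = [1:b^3c^2:b^3c:b^2c]$ stated in the theorem. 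The fixed points and the orbit structure then follow by solving $2k-j \equiv j$, $k - j \equiv k \pmod 4$ etc.: one gets $(j,k) = (0,0)$ and $(2,0)$ fixed (i.e. $[1:1:1:1]$ and $[1:1:-1:-1]$), $(0,2) \leftrightarrow (2,2)$ a transposition (i.e. $[1:-1:1:-1] \leftrightarrow [1:-1:-1:1]$), and the remaining twelve $p_{j,k}$ falling into three $4$-cycles.

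For part (2), the argument for $T$ is identical in structure: form the matrix $M$ from the relations $x_1x_2 - x_3^2,\, x_2x_1 + x_4^2,\, x_1x_3 - x_2x_4,\, x_2x_3 - x_3x_1,\, x_1x_4 + x_4x_2,\, x_3x_2 - x_4x_1$, compute the minors, decompose to recover the twenty points $e_i$ and $q_{j,k} = [1:\i^j:\z\i^k:\z^3\i^{-j-k}]$, check reducedness via the affine-cover lemma, and compute $\tau(q) = \ker M(q)$ to get $\tau(q_{j,k}) = q_{2k+1-j,\,k-j}$, equivalently $\tau[1:b:c:-b^{-1}c^{-1}] = [1:b^3c^2:b^3c:b^2c]$ with $b = \i^j$, $c = \z\i^k$; here the presence of $\z$ (a primitive \emph{eighth} root) in the third coordinate is what shifts the index by the extra $+1$ and accounts for there being no fixed points, so all sixteen $q_{j,k}$ form four orbits of size four. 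I expect the main obstacle to be purely bookkeeping: correctly tracking the eighth-root-of-unity arithmetic (distinguishing $\z$, $\i = \z^2$, and their powers) so that the kernel computations simplify to the clean closed forms stated, and making sure the \texttt{Macaulay2} decomposition over $\Q$ (or over $\Q(\z)$) is interpreted correctly when the twenty points are not all rational. Since the theorem statement records that the computations "have also been checked by hand," the write-up can reasonably present the \texttt{Macaulay2} output as the proof and relegate the hand verification of the $\tau$-formulas and orbit structure to a short explicit check, exactly as is done for Theorem \ref{point schemes, automorphisms of R}.
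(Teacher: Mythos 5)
Your proposal follows essentially the same route as the paper: form the $6\times 4$ matrix $M$, compute the fifteen $4\times 4$ minors with \texttt{pointScheme}, decompose with \texttt{minimalPrimes}, deduce reducedness from Van den Bergh's degree-twenty bound combined with the presence of twenty distinct closed points (the paper uses exactly this argument for $S$ and $T$, reserving the affine-cover radical check of Lemma \ref{checking reduced on affine cover} for $R$), and read off $\tau(p)$ as $\ker M(p)$. The only blemish is a small indexing slip in the fixed-point bookkeeping: solving $2k-j\equiv j$ and $k-j\equiv k \pmod{4}$ gives $(j,k)=(0,0)$ and $(0,2)$ fixed and the transposition $p_{2,0}\leftrightarrow p_{2,2}$, rather than $(2,0)$ fixed and $(0,2)\leftrightarrow(2,2)$ as you wrote, although the points you identify in coordinates ($[1:1:1:1]$, $[1:1:-1:-1]$ fixed; $[1:-1:1:-1]\leftrightarrow[1:-1:-1:1]$) are the correct ones.
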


\begin{proof}
We sketch the proof of (1). The proof of (2) is similar. 
With the stated presentation for $S$, the $6 \times 4$ matrix $M$ determined by factoring the relations is:
$$
M = \begin{bmatrix} 0 & x_1 & -x_3 & 0 \\
                      x_2 & 0 & 0 & -x_4 \\
                      0 & 0 & x_1 & -x_2 \\
                      -x_3 & 0 & x_2 & 0 \\
                      0 & -x_4 & 0 & x_1 \\
                      -x_4 & x_3 & 0 & 0 \\
      \end{bmatrix}.
$$
The {\it Macaulay2} function {\tt pointScheme} outputs the fifteen
$4 \times 4$ minors of $M$ as an ideal $I$ of the polynomial ring
$\Q[x_1, x_2, x_3, x_4]$. Then we use the {\it Macaulay2} function
{\tt minimalPrimes} on the ideal $I$ to compute the irreducible components
of the zero locus of $I$. We find that ${\mathcal{P}}(S)$ is the union of
irreducible components: 
$$\begin{array}{ll}
 C_1 = \mathcal{V}(x_2, x_3, x_4), &
 C_2 = \mathcal{V}(x_1, x_3, x_4), \\
 C_3 = \mathcal{V}(x_1, x_2, x_4), &
 C_4 = \mathcal{V}(x_1, x_2, x_3), \\
 C_5 = \mathcal{V}(x_1-x_4, x_2-x_4, x_3-x_4), &
 C_6 = \mathcal{V}(x_1+x_4, x_2+x_4, x_3-x_4), \\
 C_7 = \mathcal{V}(x_1+x_4, x_2-x_4, x_3+x_4), &
 C_8 = \mathcal{V}(x_1-x_4, x_2+x_4, x_3+x_4), \\
 C_{9} = \mathcal{V}(x_1+x_2, x_3-x_4, x_2^2+x_4^2), &
 C_{10} = \mathcal{V}(x_1-x_2, x_3+x_4, x_2^2+x_4^2), \\
 C_{11} = \mathcal{V}(x_1+x_3, x_2-x_4, x_3^2+x_4^2), &
 C_{12} = \mathcal{V}(x_1-x_3, x_2+x_4, x_3^2+x_4^2), \\
 C_{13} = \mathcal{V}(x_1+x_4, x_2-x_3, x_3^2+x_4^2), &
 C_{14} = \mathcal{V}(x_1-x_4, x_2+x_3, x_3^2+x_4^2). 
\end{array}$$
Working over the field $\Q(\i)$, it is then easy to see that the closed points of ${\mathcal{P}}(S)$ are as stated. Van den Bergh proved in unpublished work that if the point scheme of a quadratic AS regular of global dimension four is a finite subscheme of $\P^3$, then it has degree twenty; see \cite{V} for example. Since ${\mathcal P}(S)$ consists of twenty distinct closed points, we conclude that the point scheme of $S$ is reduced.
The determination of the automorphism $\t: {\mathcal P}(S) \to {\mathcal P}(S)$ on closed points is a straightforward computation and is omitted.
\end{proof}

Next we determine the line schemes of $S$ and $T$. We show these schemes are reduced and we find their decompositions into irreducible components. 

\begin{theorem} \label{line scheme of S, T}
Using the presentation for $S$ (respectively $T$) with generators
$x_1, x_2, x_3, x_4$ and relations:
$x_1x_2-x_3^2$, $x_4^2-x_2x_1$ (respectively $x_4^2+x_2x_1$), $x_1x_3-x_2x_4$, 
$x_4x_1-x_3x_2$, $x_2x_3-x_3x_1$, $x_4x_2-x_1x_4$ (respectively $x_4x_2+x_1x_4$), 
the irreducible components of $\mathcal{L}(S)$ and $\mathcal{L}(T)$ are:
$$\begin{array}{ll}
C_1 =    \V(M_{12}, M_{34}, M_{13}-\alpha M_{24}, P),&
C_2 =    \V(M_{12}, M_{34}, M_{13}+\alpha M_{24}, P),\\
C_3 =    \V(M_{13}, M_{24}, M_{14}-\alpha M_{23}, P),&
C_4 =    \V(M_{13}, M_{24}, M_{14}+\alpha M_{23}, P),\\
C_5 =    \V(M_{14}, M_{23}, M_{12}-M_{34}, P),&
C_6 =    \V(M_{14}, M_{23}, M_{12}+M_{34}, P),
\end{array}$$
\vspace*{-.5em}
$$\begin{array}{l}
C_7 =    \V(M_{12}-M_{34}, M_{13}-\alpha M_{24}, M_{14}-\alpha M_{23}, P),\\
C_8 =    \V(M_{12}-M_{34}, M_{13}+\alpha M_{24}, M_{14}+\alpha M_{23}, P),\\
C_9 =    \V(M_{12}+M_{34}, M_{13}-\alpha M_{24}, M_{14}+\alpha M_{23}, P),\\
C_{10} = \V(M_{12}+M_{34}, M_{13}+\alpha M_{24}, M_{14}-\alpha M_{23}, P),\\
\end{array}$$
where $P$ is the Pl\"ucker polynomial, and where $\alpha = 1$ for $S$ and
$\alpha = \i$ for $T$.   For $1 \leq i \leq 10$, the component $C_i$
is isomorphic to $\P^1$. The schemes $\mathcal{L}(S)$
and $\mathcal{L}(T)$ are equidimensional of
dimension one and are reduced schemes.
\end{theorem}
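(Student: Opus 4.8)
The proof is computational, following the strategy used for \Cref{line scheme of R}, and can be carried out uniformly for $S$ and $T$ by keeping track of the parameter $\alpha$ (equal to $1$ for $S$ and to $\i$ for $T$). First I would form the Koszul dual $S^!$ (respectively $T^!$) and the associated $10 \times 4$ matrix $\hat{M}$ of linear forms in $z_1,\dots,z_4$ encoding its ten defining relations, build $\mathcal{M} = [\hat{M}_u\ \hat{M}_v]$, take its forty-five $8\times 8$ minors, rewrite these in terms of the $N_{ij} = u_iv_j - u_jv_i$, apply the orthogonality isomorphism, and adjoin the Pl\"ucker polynomial $P$. This produces the defining ideal $I$ of $\mathcal{L}(S)$ (respectively $\mathcal{L}(T)$) inside $B := \k[M_{12}, M_{13}, M_{14}, M_{23}, M_{24}, M_{34}]$; it is precisely the output of the {\it Macaulay2} function {\tt lineSchemeFourDim}, and it can also be checked by hand. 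Running {\tt minimalPrimes} on $I$ (over a field containing $\i$, which matters only for $T$) then shows that $I$ has exactly the ten minimal primes generated by the forms defining $C_1,\dots,C_{10}$, so $\mathcal{L}(A) = \bigcup_{i=1}^{10} C_i$ set-theoretically and the $C_i$ are its irreducible components.

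Next I would verify that each $C_i$ is isomorphic to $\P^1$. Each $C_i$ is cut out by three linear forms together with $P$; the three linear forms define a plane $\Pi_i \cong \P^2$ in $\P^5$, and after eliminating the dependent Pl\"ucker coordinates the restriction $P|_{\Pi_i}$ becomes a ternary quadratic form whose symmetric matrix has rank three --- this is the point at which $\alpha \neq 0$ is used, and it holds for both $\alpha = 1$ and $\alpha = \i$. Hence $C_i$ is a smooth conic in $\P^2$, so $C_i \cong \P^1$; in particular every component has dimension one and $\mathcal{L}(A)$ is equidimensional of dimension one. For reducedness I would apply \Cref{checking reduced on affine cover} to the standard affine cover $\{U_{ij} = D_{+}(M_{ij})\}$ of $\P^5$ restricted to $\mathcal{L}(A)$, where $U_{ij} \cong \Spec B_{(M_{ij})}/I_{(M_{ij})}$ and the defining ideal of $B_{(M_{ij})}/I_{(M_{ij})}$ is obtained by dehomogenizing $I$ via $M_{ij} = 1$; checking with {\it Macaulay2} that each of these fifteen dehomogenized ideals is radical shows that every $U_{ij}$ is a reduced affine scheme, and \Cref{checking reduced on affine cover} then gives that $\mathcal{L}(A)$ is reduced. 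The argument for $T$ is identical to that for $S$ with $\alpha$ replaced by $\i$; the two sign changes in the relations of $T$ affect only coefficients and not the shape of the computation.

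The main obstacle is not the mechanical minor and {\tt minimalPrimes} computations but the reducedness step: {\tt minimalPrimes} pins down only $\sqrt{I}$, so to conclude that the \emph{scheme} $\mathcal{L}(A)$ --- and not merely its underlying reduced subscheme --- equals the union of the ten $\P^1$'s, one genuinely needs the affine-chart radicality checks furnished by \Cref{checking reduced on affine cover}. A secondary point to watch is that, for $T$, the primary decomposition must be computed over a field containing $\i$, since over $\Q$ the components $C_1, C_2$ (and likewise $C_3, C_4$, $C_7, C_8$, $C_9, C_{10}$) would be grouped into conjugate pairs rather than appearing individually.
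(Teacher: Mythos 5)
Your identification of the defining ideal, its ten minimal primes, and the verification that each $C_i$ is a smooth conic (hence $\cong \P^1$) inside the plane cut out by its three linear forms matches the paper's argument. Where you genuinely diverge is the reducedness step: you propose checking radicality of the dehomogenized ideal on each affine chart $D_{+}(M_{ij})$ and invoking \Cref{checking reduced on affine cover} --- exactly the method the paper uses for $\mathcal{L}(R)$ in \Cref{line scheme of R} --- whereas for $S$ and $T$ the paper argues via degrees instead: since $S$ is a noetherian, Auslander-regular, GK--Cohen--Macaulay domain with Hilbert series $(1-t)^{-4}$ and the components of $\mathcal{L}(S)_{\mathrm{red}}$ are one-dimensional, \cite[Lemma 3.2]{ChV} shows $\mathcal{L}(S)$ has no embedded components; each $C_i$ has degree two, so $\deg \mathcal{L}(S)_{\mathrm{red}} = 20$, while \cite[Corollary 3.7]{CSV} gives $\deg \mathcal{L}(S) = 20$, forcing $\mathcal{L}(S) = \mathcal{L}(S)_{\mathrm{red}}$. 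Your route is more elementary and self-contained but computation-heavy (a small correction: there are six affine charts, one per Pl\"ucker coordinate $M_{ij}$, so six radicality checks per algebra, not fifteen --- fifteen is the number of $4 \times 4$ minors in the point-scheme computation); the paper's route trades those Gr\"obner computations for the homological input already established in \Cref{consequences of PI} together with the degree-twenty result, which also explains conceptually why the scheme must be reduced once the degree bound is saturated by ten reduced conics. Both arguments are valid, and you correctly isolated the two delicate points: that {\tt minimalPrimes} only determines $\sqrt{I}$, and that the decomposition for $T$ must be read over a field containing $\i$.
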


\begin{proof}
We prove the result in the case of $S$, and leave the similar proof in
the case of $T$ to the reader.

Let $B = \k[M_{i,j}: 1 \leq i < j \leq 4]$ be the polynomial ring in the Pl\"ucker coordinates. A computation using the aforementioned {\it Macaulay2} functions shows that the minimal primes of the ideal $I$ of $B$ determining the line scheme $\mathcal{L}(S)$ give the stated components $C_i$, $1 \leq i \leq 10$. Note that each components is cut out by three independent linear forms, so each component is contained in a $\P^2$.  Modulo these linear forms, the Pl\"ucker polynomial gives a smooth conic in the associated $\P^2$, so that $C_i \cong \P^1$. The statement about the dimension of $\mathcal{L}(S)$ follows.

To show that $\mathcal{L}(S)$ is reduced we follow the argument given in \cite[Lemma 3.2, Theorem 3.3]{ChV}, also see \cite[Corollary 3.7]{CSV}. Let $\mathcal{L}'(S) = \mathcal{L}(S)_{\rm{red}}$ be the reduced scheme associated to $\mathcal{L}(S)$.  We recall that $S$ is a noetherian, Auslander-regular domain that satisfies the Cohen-Macaulay property with respect to GK-dimension and has the Hilbert series $(1-t)^{-4}$. Furthermore, we have established that the irreducible components of $\mathcal{L}'(S)$ have dimension one, so the proof of \cite[Lemma 3.2]{ChV} shows that $\mathcal{L}(S)$ has no embedded components. As subschemes of $\P^5$, it is clear that the components $C_i$ for $1 \leq i \leq 10$ each have $\deg(C_i) = 2$, so $\deg \mathcal{L}'(S) = 20$. By \cite[Corollary 3.7]{CSV}, $\deg \mathcal{L}(S) = 20$. Since $\mathcal{L}(S)$ has no embedded points, it follows that $\mathcal{L}(S) = \mathcal{L}'(S)$, so $\mathcal{L}(S)$ is reduced.
\end{proof}

The previous result shows that each component of the line scheme of $S$ and $T$ is a conic obtained by intersecting the Pl\"ucker hypersurface $\mathcal{V}(P)$ with a projective plane in $\P^5$. Such conics are known to correspond, via $\pi: {\mathbb G} \to \P^5$, with one of the two rulings on a quadric in $\P^3$ ; see \cite[Exercise 6.5.13, p. 81]{Sha} for example. In the next result, we describe these quadrics and rulings for each component of the line scheme of $S$ and $T$. We explain how to compute the quadric and ruling by way of an example. Let $C = \V(M_{12}, M_{34}, M_{13}-M_{24}, P)$. A line $\ell$ in $\P^3$, represented as $\left[\begin{smallmatrix} x_1 & x_2 & x_3 & x_4 \\ y_1 & y_2 & y_3 & y_4 \end{smallmatrix}\right]$, is in $\pi^{-1}(C)$ if and only if 
\begin{eqnarray*}
x_1y_2-x_2y_1 & = & 0, \\
x_3y_4-x_4y_3 & = & 0, \\
x_1y_3-x_3y_1-x_2y_4-x_4y_2 & = & 0.
\end{eqnarray*}
Let $M$ be the coefficient matrix of the above system, viewing the $y_j$ as the variables.  Observe that
$\rank(M) \geq 2$ for $[x_1:x_2:x_3:x_4] \in \P^3$.
One checks that $\rank(M) = 2$ if and only if $x_1x_3-x_2x_4 = 0$.
Define the quadric $Q = \V(x_1x_3-x_2x_4)$, and call this the
{\emph{quadric associated to $C$}}. There are two rulings on $Q$,
namely:
\begin{eqnarray*}
{\mathcal R}_1 & = & \{\V(sx_1+tx_2, tx_3+sx_4) : [s:t] \in \P^1\},\\
{\mathcal R}_2 & = & \{\V(sx_4-tx_1, sx_3-tx_2) : [s:t] \in \P^1\}.
\end{eqnarray*}
It is now straightforward to check that $\pi^{-1}(C) = {\mathcal R}_1$ and
$\pi^{-1}(C) \cap {\mathcal R}_2 = \emptyset$. 

A similar analysis for each component of the line schemes of $S$ and $T$ produces the following tables. Details are left to the reader. 
We find it remarkable that for each quadric $Q$ associated to a component of the line scheme of $S$ or $T$, of the two rulings on $Q$, only one of the rulings produces line modules. The other ruling seems to produce fat point modules of multiplicity two. We discuss our findings and some questions regarding this phenomenon in Section \ref{sect-questions}.

\begin{table}[H]
\centering
\scalebox{.85}{
\begin{tabular}{| c | c | c | c |}
\hline
{\bf Comp.} & 
{\bf Quadric}  & \multicolumn{2}{|c|}{\bf{Ruling}} \\ \hline

$C_1$ & 
$\V(x_1x_3 - x_2x_4)$ & $sx_1 + tx_2$ & $tx_3 + sx_4$ \\ \hline

$C_2$ &
$\V(x_1x_3 + x_2x_4)$ & $sx_1 + tx_2$ & $tx_3 - sx_4$ \\ \hline

$C_3$ &
$\V(x_1x_4 + x_2x_3)$ & $sx_1 + tx_3$ & $sx_2 - tx_4$ \\ \hline

$C_4$ &
$\V(x_1x_4 - x_2x_3)$ & $sx_1 + tx_3$&$sx_2 + tx_4$ \\ \hline

$C_5$ &
$\V(x_1x_2 + x_3x_4)$ & $sx_1 + tx_4$&$tx_2 - sx_3$ \\ \hline

$C_6$ &
$\V(x_1x_2 - x_3x_4)$ & $sx_1 + tx_4$&$tx_2 + sx_3$ \\ \hline

$C_7$ &
$\V((x_1^2-x_2^2)+(x_3^2-x_4^2))$ & $s(x_1-x_2) + t(x_3+x_4)$&$ t(x_1+x_2) - s(x_3-x_4)$ \\ \hline

$C_8$ &
$\V((x_1^2-x_2^2)-(x_3^2-x_4^2))$ & $s(x_1-x_2) + t(x_3-x_4)$&$ t(x_1+x_2) + s(x_3+x_4)$ \\ \hline

$C_9$ &
$\V((x_1^2+x_2^2)+(x_3^2+x_4^2))$ &
 $s(x_1+\i x_2) + t(x_3+\i x_4)$&$
   t(x_1-\i x_2) - s(x_3-\i x_4)$ \\ \hline

$C_{10}$ &
$\V((x_1^2+x_2^2)-(x_3^2+x_4^2))$ &
$s(x_1+\i x_2) + t(x_3-\i x_4)$&$
  t(x_1-\i x_2) + s(x_3+\i x_4)$
\\ \hline
\end{tabular}}
\caption{Quadrics Associated to the Line Scheme of $S$}
\label{table:preimageLSofS}
\end{table}

\begin{table}[H]
\scalebox{.85}{
\begin{tabular}{| c | c | c | c |}
\hline
{\bf Comp.} & 
{\bf Quadric}  & \multicolumn{2}{|c|}{\bf Ruling} \\ \hline

$C_1$ & 
$\V(x_1x_3 - \i x_2x_4)$ & $sx_1 + tx_2$&$tx_3 + \i sx_4$ \\ \hline

$C_2$ &
$\V(x_1x_3 + \i x_2x_4)$ & $sx_1 + tx_2$&$tx_3 - \i sx_4$ \\ \hline

$C_3$ &
$\V(x_1x_4 + \i x_2x_3)$ & $sx_1 + tx_3$&$sx_2 + \i tx_4$ \\ \hline

$C_4$ &
$\V(x_1x_4 - \i x_2x_3)$ & $sx_1 + tx_3$&$sx_2 - \i tx_4$ \\ \hline

$C_5$ &
$\V(x_1x_2 + x_3x_4)$ & $sx_1 + tx_4$&$tx_2 - sx_3$ \\ \hline

$C_6$ &
$\V(x_1x_2 - x_3x_4)$ & $sx_1 + tx_4$&$tx_2 + sx_3$ \\ \hline

$C_7$ &
$\V((x_1^2+x_2^2)+\i(x_3^2-x_4^2))$ & $s(x_1+\i x_2) + t(x_3-x_4)$&$ t(x_1-\i x_2) - \i s(x_3+x_4)$ \\ \hline

$C_8$ &
$\V((x_1^2+x_2^2)-\i(x_3^2-x_4^2))$ & $s(x_1+\i x_2) + t(x_3+x_4)$&$ t(x_1-\i x_2) + \i s(x_3-x_4)$ \\ \hline

$C_9$ &
$\V((x_1^2-x_2^2)+\i(x_3^2+x_4^2))$ & $s(x_1+x_2) + t(x_3-\i x_4)$&$ t(x_1-x_2) - \i s(x_3+\i x_4)$ \\ \hline

$C_{10}$ &
$\V((x_1^2-x_2^2)+\i(x_3^2+x_4^2))$ & $s(x_1+x_2) + t(x_3+\i x_4)$&$ t(x_1-x_2) + \i s(x_3-\i x_4)$ \\ \hline
\end{tabular}}
\caption{Quadrics Associated to the Line Scheme of $T$}
\label{table:preimageLSofT}
\end{table}

\begin{theorem}\label{lines in P^3 for S, T}
    The preimages of the components of the line schemes of $S$ and $T$ under $\pi: {\mathbb G} \to \P^5$, along with their associated quadrics and the rulings, are given
    by \Cref{table:preimageLSofS} and \Cref{table:preimageLSofT}, respectively.
\end{theorem}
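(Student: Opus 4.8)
The plan is to verify \Cref{table:preimageLSofS} and \Cref{table:preimageLSofT} one component at a time, carrying out for each of the twenty components $C_i$ the computation performed above for the component $C_1=\V(M_{12},M_{34},M_{13}-M_{24},P)$ of $S$. The one structural fact used throughout is the classical statement (\cite[Exercise 6.5.13, p.~81]{Sha}) that a smooth conic of the form $\Pi\cap{\mathbb G}$, with $\Pi\subset\P^5$ a plane not contained in ${\mathbb G}={\mathbb G}(1,\P^3)$, is carried by the Pl\"ucker map onto exactly one of the two rulings of a smooth quadric surface in $\P^3$. By \Cref{line scheme of S, T} every component $C_i$ is such a conic, so the theorem amounts to naming the quadric and the ruling in each of the twenty cases.

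Fix a component $C_i=\V(\ell_1,\ell_2,\ell_3,P)$ with $\ell_1,\ell_2,\ell_3$ linear forms in the $M_{jk}$, and represent a line by $\left[\begin{smallmatrix}x_1&x_2&x_3&x_4\\ y_1&y_2&y_3&y_4\end{smallmatrix}\right]$. Substituting $M_{jk}=x_jy_k-x_ky_j$ into $\ell_1,\ell_2,\ell_3$ (the Pl\"ucker relation $P$ being satisfied automatically for a genuine line) turns membership in $\pi^{-1}(C_i)$ into a system of three linear equations in $(y_1,\dots,y_4)$ with coefficient matrix $M=M(x)$ whose entries are linear forms in $(x_1,\dots,x_4)$; a direct check shows $(x_1,\dots,x_4)\in\ker M(x)$ always, so $\rank M(x)\le 3$. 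Computing the $3\times 3$ minors of $M(x)$, one sees that their common zero locus is precisely the quadric $Q_i$ listed in the table, i.e.\ $\rank M(x)=2$ exactly on $Q_i$. Consequently, for a line $\ell$ with a chosen point $[x]$, having $\ell\in\pi^{-1}(C_i)$ forces $[x]\in Q_i$ (hence $\ell\subset Q_i$), and over each $[x]\in Q_i$ one has $\ker M(x)=\langle x,v(x)\rangle$ for an explicit second vector $v(x)$ linear in $x$, so the unique line of $\pi^{-1}(C_i)$ through $[x]$ is $\ell(x,v(x))$. Letting $[x]$ range over $Q_i$, these lines parametrize exactly one of the two rulings of $Q_i$; a direct computation shows it is the ruling ${\mathcal R}$ written in the table, and that the other ruling ${\mathcal R}'$ satisfies $\pi^{-1}(C_i)\cap{\mathcal R}'=\emptyset$. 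This gives $\pi^{-1}(C_i)={\mathcal R}$.

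For the components $C_1,\dots,C_6$ the rank-drop loci are visibly rank-four quadrics of the shape $\V(x_ax_b\pm x_cx_d)$, whose two rulings are the ones displayed. For $C_7,\dots,C_{10}$ the quadrics have the shape $\V((x_1^2\pm x_2^2)\pm(x_3^2\pm x_4^2))$ for $S$ and the $\i$-twisted analogues for $T$; an obvious linear change of coordinates brings these into the previous form, and then the same analysis yields the listed rulings. Running the procedure with $\alpha=\i$ replacing $\alpha=1$ produces \Cref{table:preimageLSofT} from \Cref{table:preimageLSofS}.

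The difficulty here is organizational rather than conceptual: the argument is a bank of twenty nearly identical linear-algebra computations. The two spots that need real care are (i) confirming that the ideal of $3\times 3$ minors of $M(x)$ cuts out exactly the stated quadric $Q_i$ and nothing larger, and (ii) keeping track of which ruling of $Q_i$ is the one yielding line modules, since interchanging $s$ and $t$, or swapping the two linear forms, gives the other ruling. Both can be handled by the {\it Macaulay2} routines already used to compute the line schemes, or checked by hand.
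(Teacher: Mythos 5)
Your proposal is correct and takes essentially the same route as the paper: the paper also invokes the correspondence between plane conic sections of $\mathbb{G}$ and rulings of smooth quadrics, computes the rank-two locus of the coefficient matrix $M(x)$ explicitly for the sample component $\V(M_{12},M_{34},M_{13}-M_{24},P)$ to identify the quadric and the ruling with $\pi^{-1}(C)=\mathcal{R}_1$ and $\pi^{-1}(C)\cap\mathcal{R}_2=\emptyset$, and then declares that "a similar analysis" for the remaining components yields the two tables, leaving those details to the reader. Your uniform organization of the twenty cases (including the linear change of coordinates for $C_7$--$C_{10}$ and the $\alpha=\i$ substitution for $T$) is just a systematic execution of that same computation.
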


In Table \ref{table:IntsOfCompsLSofS}, we collect which components
of the line scheme of $S$ intersect, as well as which lines lie in
the intersection; the claims in the table follow from a direct 
calculation, with the equations of the components, followed
by pulling back under the Pl\"ucker embedding.  Component 
intersections that are not listed are empty.  Recall that
$\ell(p,q)$ denotes the line in $\P^3$ passing through the 
points $p$ and $q$.

\begin{table}[H]
\begin{minipage}[t]{.48\textwidth}
\strut\vspace*{-\baselineskip}\newline
\begin{center}
\scalebox{.85}{
\begin{tabular}{|c | c | c |}
\hline
{\bf Intersections} & \multicolumn{2}{c|}{{\bf Corresponding lines}} \\
\hline
$C_1 \cap C_2$    & $\ell(e_1,e_4)$ & $\ell(e_2,e_3)$ \\ \hline
$C_1 \cap C_7$    & $\ell(p_{0,0},p_{0,2})$ & $\ell(p_{2,0},p_{2,2})$ \\ \hline
$C_1 \cap C_9$    & $\ell(p_{1,0},p_{1,2})$ & $\ell(p_{3,0},p_{3,2})$ \\ \hline
$C_2 \cap C_8$    & $\ell(p_{0,1},p_{0,3})$ & $ \ell(p_{2,1},p_{2,3})$ \\ \hline
$C_2 \cap C_{10}$ & $\ell(p_{1,1},p_{1,3})$ & $ \ell(p_{3,1},p_{3,3})$ \\ \hline
$C_3 \cap C_4$    & $\ell(e_1, e_2)$ & $ \ell(e_3,e_4)$ \\ \hline
$C_3 \cap C_7$    & $\ell(p_{0,1},p_{2,1})$ & $ \ell(p_{0,3},p_{2,3})$ \\ \hline
$C_3 \cap C_{10}$ & $\ell(p_{1,0},p_{3,0})$ & $ \ell(p_{1,2},p_{3,2})$ \\ \hline
\end{tabular}}
\end{center}
\end{minipage}
\begin{minipage}[t]{.48\textwidth}
\strut\vspace*{-\baselineskip}\newline
\begin{center}
\scalebox{.85}{
\begin{tabular}{|c | c | c |}
\hline
{\bf Intersections} & \multicolumn{2}{c|}{{\bf Corresponding lines}} \\
\hline
$C_4 \cap C_8$    & $\ell(p_{0,2},p_{2,2})$ & $ \ell(p_{0,0},p_{2,0})$ \\ \hline
$C_4 \cap C_9$    & $\ell(p_{1,1},p_{3,1})$ & $ \ell(p_{1,3},p_{3,3})$ \\ \hline
$C_5 \cap C_6$    & $\ell(e_1, e_3)$ & $ \ell(e_2,e_4)$ \\ \hline
$C_5 \cap C_7$    & $\ell(p_{1,3},p_{3,1})$ & $ \ell(p_{1,1},p_{3,3})$ \\ \hline
$C_5 \cap C_8$    & $\ell(p_{1,2},p_{3,0})$ & $ \ell(p_{1,0},p_{3,2})$ \\ \hline
$C_6 \cap C_9$    & $\ell(p_{0,1},p_{2,3})$ & $ \ell(p_{0,3},p_{2,1})$ \\ \hline
$C_6 \cap C_{10}$ & $\ell(p_{0,0},p_{2,2})$ & $ \ell(p_{0,2},p_{2,0})$ \\ \hline
\end{tabular}}
\end{center}
\end{minipage}
\caption{Intersections of Components of the Line Scheme of $S$}
\label{table:IntsOfCompsLSofS}
\end{table}
It is straightforward (but cumbersome) to check that the
lines appearing in the above table are the only lines 
of $S$ which contain points of $S$, and each such 
line contains precisely two points of $S$. Indeed, 
each pair of points in $\{e_1,e_2,e_3,e_4\}$
lies on a unique line of $S$, and the points 
$p_{j,k}$ and $p_{j',k'}$ lie on a line of $S$ if
and only if $j - j'$ and $k - k'$ are even, and this line
is unique.

In Table \ref{table:IntsOfCompsLSofT}, we collect the corresponding data
for the components of the line scheme of $T$ as well.  Note that
in the order we have chosen, the same components intersect,
and each intersection contains precisely two points as before.
Also, one may check that the points and lines of $T$ satisfy similar
relations to those laid out in the previous paragraph for $S$.

\begin{table}[H]
\begin{minipage}[t]{.48\textwidth}
\strut\vspace*{-\baselineskip}\newline
\begin{center}
\scalebox{.85}{
\begin{tabular}{|c | c | c |}
\hline
{\bf Intersections} & \multicolumn{2}{c|}{{\bf Corresponding lines}} \\
\hline
$C_1 \cap C_2$    & $\ell(e_1,e_4)$ & $\ell(e_2,e_3)$ \\ \hline
$C_1 \cap C_7$    & $\ell(q_{3,1},q_{3,3})$ & $\ell(q_{1,1},q_{1,3})$ \\ \hline
$C_1 \cap C_9$    & $\ell(q_{0,1},q_{0,3})$ & $\ell(q_{2,1},q_{2,3})$ \\ \hline
$C_2 \cap C_8$    & $\ell(q_{1,0},q_{1,2})$ & $\ell(q_{3,0},q_{3,2})$ \\ \hline
$C_2 \cap C_{10}$ & $\ell(q_{2,0},q_{2,2})$ & $\ell(q_{0,0},q_{0,2})$ \\ \hline
$C_3 \cap C_4$    & $\ell(e_1, e_2)$ & $ \ell(e_3,e_4)$ \\ \hline
$C_3 \cap C_7$    & $\ell(q_{1,2},q_{3,2})$ & $\ell(q_{1,0},q_{3,0})$ \\ \hline
$C_3 \cap C_{10}$ & $\ell(q_{0,1},q_{2,1})$ & $\ell(q_{0,3},q_{2,3})$ \\ \hline
\end{tabular}}
\end{center}
\end{minipage}
\begin{minipage}[t]{.48\textwidth}
\strut\vspace*{-\baselineskip}\newline
\begin{center}
\scalebox{.85}{
\begin{tabular}{|c | c | c |}
\hline
{\bf Intersections} & \multicolumn{2}{c|}{{\bf Corresponding lines}} \\
\hline
$C_4 \cap C_8$    & $\ell(q_{1,3},q_{3,3})$ & $\ell(q_{1,1},q_{3,1})$ \\ \hline
$C_4 \cap C_9$    & $\ell(q_{0,0},q_{2,0})$ & $\ell(q_{0,2},q_{2,2})$ \\ \hline
$C_5 \cap C_6$    & $\ell(e_1, e_3)$ & $ \ell(e_2,e_4)$ \\ \hline
$C_5 \cap C_7$    & $\ell(q_{0,2},q_{2,0})$ & $\ell(q_{0,0},q_{2,2})$ \\ \hline
$C_5 \cap C_8$    & $\ell(q_{0,3},q_{2,1})$ & $\ell(q_{0,1},q_{1,3})$ \\ \hline
$C_6 \cap C_9$    & $\ell(q_{1,0},q_{3,2})$ & $\ell(q_{1,2},q_{3,0})$ \\ \hline
$C_6 \cap C_{10}$ & $\ell(q_{1,3},q_{3,1})$ & $\ell(q_{1,1},q_{3,3})$ \\ \hline
\end{tabular}}
\end{center}
\end{minipage}
\caption{Intersections of Components of the Line Scheme of $T$}
\label{table:IntsOfCompsLSofT}
\end{table}

The following theorems summarize the incidence relations
for $S$ and $T$:
\begin{theorem} \label{thm:incidenceS}
The point scheme and line scheme of $S$ satisfy:
\begin{itemize}
    \item[(1)] For each point $p$ of $S$, there are precisely
    three lines of $S$ which contain $p$.
    \item[(2)] When $p = e_j$, these
    are the lines joining $e_j$ and
    $\{e_1,e_2,e_3,e_4\} \setminus \{e_j\}$;
    when $p = p_{j,k}$, these are the lines joining $p$ 
    with the points $p_{j+2,k},p_{j,k+2},p_{j+2,k+2}$,
    where subscripts are read modulo four.
    \item[(3)] The lines described above are exactly
    the lines that lie in precisely two components
    of the line scheme of $S$.  Lines that are contained
    in only one component of the line scheme do not contain
    any points of $S$.  There are precisely thirty lines of
    $S$ which contain points of $S$.
    \item[(4)] Each line of $S$ containing a point of $S$
    contains precisely two points of $S$.
\end{itemize}
\end{theorem}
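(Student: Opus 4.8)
The plan is to assemble \Cref{thm:incidenceS} from results already established in this section, the one genuinely new input being a clean criterion for when a point of $\P^3$ lies on a line parametrized by $\mathcal{L}(S)$. By \Cref{line scheme of S, T} and \Cref{lines in P^3 for S, T}, for each $1 \le i \le 10$ the preimage $\pi^{-1}(C_i)$ is one of the two rulings of the smooth quadric $Q_i$ recorded in \Cref{table:preimageLSofS}. Since every line of a ruling lies on $Q_i$ and exactly one line of each ruling passes through each point of $Q_i$, a point $p \in \P^3$ lies on a line of $\pi^{-1}(C_i)$ if and only if $p \in Q_i$, and when it does there is a unique such line, call it $\ell_p^{(i)}$. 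As the closed points of $\pi^{-1}(\mathcal{L}(S))$ are exactly the lines in $\bigcup_i \pi^{-1}(C_i)$, it follows that for any $p$ the lines of $S$ through $p$ are precisely the members of $\{\ell_p^{(i)} : p \in Q_i\}$, listed without repetition; distinct indices $i$ give the same line of $\P^3$ exactly when that line lies in $C_i \cap C_{i'}$.

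Next I would prove (1) and (2) by a finite computation. Substituting the twenty points of $\mathcal{P}(S)$ from \Cref{point schemes, automorphisms of S, T} into the ten quadratic forms $Q_i$ of \Cref{table:preimageLSofS}, using $\i = \z^2$ and $\i^4 = 1$, one checks that each $p \in \mathcal{P}(S)$ lies on exactly six of the $Q_i$ and that the six rulings through $p$ coincide in three pairs, producing exactly three lines through $p$. For $p = e_j$ these three lines come out to be $\ell(e_j,e_i)$ with $i \ne j$; for $p = p_{j,k}$ they are $\ell(p_{j,k},p_{j+2,k})$, $\ell(p_{j,k},p_{j,k+2})$, $\ell(p_{j,k},p_{j+2,k+2})$, subscripts read modulo $4$. (For example $e_1$ lies on $Q_1,\dots,Q_6$ but on none of $Q_7,\dots,Q_{10}$, and the six rulings through $e_1$ pair as $\{1,2\},\{3,4\},\{5,6\}$, yielding $\ell(e_1,e_4)$, $\ell(e_1,e_2)$, $\ell(e_1,e_3)$.) This is exactly (1) and (2), and it is consistent with the collinearity description recorded after \Cref{table:IntsOfCompsLSofS}.

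For (3), each of the thirty lines produced in the previous step lies, by construction, in $\pi^{-1}(C_i \cap C_{i'})$ for the two indices attached to it, and ranging over all $p \in \mathcal{P}(S)$ these are precisely the thirty lines enumerated in \Cref{table:IntsOfCompsLSofS}. A direct check that the thirty listed lines are pairwise distinct — no three of the four points in a common parity class of the $p_{j,k}$ are collinear, since the relevant $3 \times 4$ root-of-unity matrices have rank $3$, and no join $\ell(e_i,e_j)$ coincides with a $p$-line — shows each lies in exactly two components. Conversely, if a line $\ell \in \pi^{-1}(C_i)$ that lies in no other component contained some $p \in \mathcal{P}(S)$, then by the first paragraph $\ell = \ell_p^{(i)}$, which by the second paragraph lies in two components, a contradiction; hence such lines carry no points of $S$, and there are exactly thirty lines of $S$ meeting $\mathcal{P}(S)$. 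Finally (4): for $\ell(e_i,e_j) = \V(x_k,x_l)$ the only points of $\mathcal{P}(S)$ with the required vanishing coordinates are $e_i,e_j$, since every $p_{j,k}$ has all coordinates nonzero; and for a line $\ell(p_{j,k},p_{j',k'})$ the same rank-$3$ fact excludes a third point of $\mathcal{P}(S)$. The main obstacle is purely the bookkeeping of the second and fourth paragraphs — confirming the "six quadrics, pairing into three lines" pattern for all twenty points and the non-collinearity statements — which is elementary roots-of-unity arithmetic (or a short \emph{Macaulay2} verification) but somewhat lengthy to write out in full.
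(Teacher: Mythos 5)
Your argument is correct, and it organizes the verification differently from the paper. The paper does not give a formal proof of \Cref{thm:incidenceS}: it computes the pairwise intersections $C_i \cap C_j$ in Pl\"ucker coordinates, pulls them back to the thirty lines of \Cref{table:IntsOfCompsLSofS}, and then asserts that a ``straightforward (but cumbersome)'' direct check shows these are the only lines of $S$ containing points of $S$ and that each carries exactly two points. Your route instead works in $\P^3$: since \Cref{lines in P^3 for S, T} identifies each $\pi^{-1}(C_i)$ with one ruling of the smooth quadric $Q_i$ of \Cref{table:preimageLSofS}, a point $p$ lies on a line of $\pi^{-1}(C_i)$ if and only if $p \in Q_i$, and then on a unique such line. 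This converts parts (1) and (2) into the finite statement that each of the twenty points of $\mathcal{P}(S)$ lies on exactly six of the $Q_i$ with the six ruling lines collapsing into three pairs, which is a cleaner explanation of the ``three lines through each point'' pattern than the paper offers, and your sample verifications ($e_1$; the pairing $\{1,2\},\{3,4\},\{5,6\}$) are accurate. The level of detail you leave to ``roots-of-unity bookkeeping'' matches the level at which the paper itself leaves its checks.

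Two caveats, neither fatal. First, for the ``exactly'' direction of (3) — that every line lying in precisely two components is among your thirty — your argument quietly relies on the completeness of \Cref{table:IntsOfCompsLSofS} (all unlisted intersections are empty and each listed intersection contains only the two lines shown); that is legitimate, since the table precedes the theorem, but you should say explicitly that this is where that input enters, or else compute the $C_i \cap C_j$ yourself. Second, in (4) the ``rank-$3$'' fact about three points in a common parity class of the $p_{j,k}$ does not by itself exclude a point $e_i$ or a $p_{j',k'}$ of a different parity class from lying on a line $\ell(p_{j,k},p_{j+2,k})$, etc.; this is settled immediately by parametrizing the line (e.g.\ $\ell(p_{0,0},p_{2,0}) = \V(x_1-x_3,\, x_2-x_4)$ consists of points $[a:b:a:b]$, which meets $\mathcal{P}(S)$ only in those two points), so you should either state the check in this form or enlarge the claimed finite verification accordingly.
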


\begin{theorem} \label{thm:incidenceT}
The point scheme and line scheme of $T$ satisfy the same 
conclusions as Theorem \ref{thm:incidenceS} after changing $S$ 
to $T$, and $p_{j,k}$ to $q_{j,k}$.
\end{theorem}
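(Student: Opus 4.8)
The plan is to run the argument that proves Theorem~\ref{thm:incidenceS}, with the data for $S$ replaced by the data for $T$. The two inputs are: the description of the lines of $T$ from Theorem~\ref{lines in P^3 for S, T} and Table~\ref{table:preimageLSofT}, namely that $\pi^{-1}(\mathcal{L}(T))$ is the union of ten rulings $\mathcal{R}_1,\dots,\mathcal{R}_{10}$, where $\mathcal{R}_i$ is one ruling of the quadric $Q_i$ attached to the component $C_i$ (and, since each $C_i\cong\P^1$ is cut out of the Pl\"ucker quadric by a $\P^2$, through each point of $Q_i$ there passes exactly one line of $\mathcal{R}_i$); and the point scheme $\mathcal{P}(T)$ from Theorem~\ref{point schemes, automorphisms of S, T}, consisting of the coordinate points $e_1,\dots,e_4$ together with the sixteen points $q_{j,k}$, $0\le j,k\le 3$.

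For parts (1) and (2), I would, for each point $p\in\mathcal{P}(T)$, substitute the coordinates of $p$ into the ten quadric equations of Table~\ref{table:preimageLSofT} to record which $Q_i$ contain $p$. In every case one finds that $p$ lies on exactly six of the ten quadrics and that the six rulings through $p$ coincide in pairs, so that there are exactly three distinct lines of $T$ through $p$; reading these three lines off from Table~\ref{table:IntsOfCompsLSofT} yields precisely the statement of (2): when $p=e_j$ the three lines are the $\ell(e_j,e_{j'})$ with $j'\neq j$, and when $p=q_{j,k}$ they are $\ell(q_{j,k},q_{j+2,k})$, $\ell(q_{j,k},q_{j,k+2})$ and $\ell(q_{j,k},q_{j+2,k+2})$, subscripts modulo four. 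Assertion (1) is the count of three lines.

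For parts (3) and (4), the computation just described shows that any line of $T$ through a point of $T$ lies in at least two components of $\mathcal{L}(T)$, hence is one of the entries of Table~\ref{table:IntsOfCompsLSofT}; conversely, each entry of that table is visibly a line joining two points of $T$. It then remains to verify three finite, explicit claims: that the fifteen listed intersections $C_i\cap C_j$ exhaust the nonempty ones (the unlisted ones are empty, a check with the linear forms defining the $C_i$); that the $15\times 2=30$ lines so produced are pairwise distinct (consistent with each of the twenty points lying on exactly three of them, $20\cdot 3/2=30$); and that each of these lines meets $\mathcal{P}(T)$ in exactly two points (intersect the explicit line with the list of twenty points; one records here that $e_j$ and $e_{j'}$ span a unique line of $T$, and that $q_{j,k}$ and $q_{j',k'}$ span a unique line of $T$ precisely when $j\equiv j'$ and $k\equiv k'\pmod 2$). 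Since a line lying in only one component is then not among these thirty, it contains no point of $T$; this establishes (3) and (4).

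The main obstacle is the assertion buried in (3) that a line of $T$ lying in exactly one component of $\mathcal{L}(T)$ contains no point of $T$. The efficient route is the pairing phenomenon above: once one knows that every point of $\mathcal{P}(T)$ lies on precisely six of the quadrics $Q_i$ and that the six ruling lines through it coincide in pairs, every line of $T$ through a point of $T$ automatically lies in two components, so the one-component lines are point-free. Everything else is a direct finite calculation with the coordinates fixed in Theorem~\ref{point schemes, automorphisms of S, T} and Table~\ref{table:preimageLSofT}, structurally identical to the calculations already done for $S$; the only bookkeeping subtlety is carrying the primitive eighth root of unity $\zeta$ (with $\i=\zeta^2$) through the coordinates of the $q_{j,k}$ and the quadrics $Q_i$ of $T$.
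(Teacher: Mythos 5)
Your proposal is correct and is essentially the paper's argument: the paper proves Theorem \ref{thm:incidenceT} by the same direct finite verification with the explicit coordinates of the twenty points and the ten rulings of Table \ref{table:preimageLSofT}, summarized in Table \ref{table:IntsOfCompsLSofT} and the remark that the incidence pattern matches that of $S$. Your observation that each point of $\mathcal{P}(T)$ lies on exactly six of the quadrics $Q_i$, with the six ruling lines coinciding in pairs, is just a tidy way of organizing the paper's ``straightforward (but cumbersome)'' check that the thirty tabulated lines are the only lines of $T$ containing points of $T$, and that lines in a single component are point-free.
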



\section{Remarks and Questions} 
\label{sect-questions}

In this final section we discuss some questions and problems that we intend to study in forthcoming work. 
\subsection{Dual reflection groups and their associated AS regular algebras}

This work began with the search for groups $G$ of order 16 for which there is an associated $G$-graded AS regular algebra $A$ where the identity component $A_e$ of $A$ is also AS regular.  The problem of classifying all finite dual reflection groups, groups $G$ for which such a $G$-graded AS regular algebra exists, remains.  Are there infinite families of dual reflection groups, beyond those in Examples \ref{Craw} and \ref{wreath}? In this paper the relations for the algebras $A$ associated to the modular and semidihedral groups of order 16  were obtained from the multiplication table of the group grades of the degree one elements of $A$. Do the algebras associated to dual reflection groups always have binomial relations?  Are there properties particular to AS regular algebras associated to dual reflection groups? More generally, do AS regular algebras graded by a finite group have any special algebraic or geometric properties?

\subsection{Rulings on quadrics and fat points}

Let $A$ denote the algebra $S$ or $T$. Recall that the irreducible components of the line scheme of $A$, the $C_i$, $1 \leq i \leq 10$, are ten smooth conics in $\P^5$. Moreover, to each $C_i$ we have associated a unique smooth quadric $Q_i$ in $\P^3$. Of the two rulings on $Q_i$, we have proved that only one of the rulings produces line modules. We call this {\it{ruling one}}, and denote it by ${\mathcal{R}}_{i, 1}$. The other ruling will be denoted by ${\mathcal{R}}_{i, 2}$ and be referred to as {\it{ruling two}} (on $Q_i)$. 

Let $\ell \in {\mathcal{R}}_{i,2}$ and assume that $\ell \notin {\mathcal{R}}_{j, 1}$ for any $1 \leq j \leq 10$. Then computer evidence, using \emph{Macaulay2}, indicates that the module $M(\ell)$ has Hilbert series $1 + \frac{2t}{1-t}$. So the module $F(\ell) = [M(\ell)(1)]_{\geq 1}$ would have Hilbert series $\frac{2}{1-t}$. Recall that a \emph{fat point module of multiplicity $e$} is a right $A$-module that is generated in degree $0$, is critical with respect to GK-dimension, and has Hilbert series $\frac{e}{1-t}$. 

\begin{question}
Let $\ell \in {\mathcal{R}}_{i,2}$ and assume that $\ell \notin {\mathcal{R}}_{j, 1}$ for any $1 \leq j \leq 10$. Is $F(\ell)$ always a fat point module of multiplicity two?
\end{question}

We have proved that the answer is affirmative for some cases. 

\subsection{Graded isolated singularities and rulings}

Let $A$ be a connected graded noetherian $\k$-algebra. Let $\gr A$ denote the category of finitely generated graded right $A$-modules, and $\tors A$ be the full subcategory of $\gr A$ consisting of finite-dimensional $\k$-modules. Then the noncommutative projective scheme associated to $A$ is the Serre quotient $\Proj A := \dfrac{\gr A}{\tors A}.$  The \emph{global dimension} of $\Proj A$ is:
$$\gldim(\Proj A) := \sup\{i : \Ext^i_{\Proj A}(\mathcal{M},\mathcal{N}) \ne 0 \text{ for some } \mathcal{M}, \mathcal{N} \in \Proj A\}.$$
Ueyama, \cite[Definition 2.2]{U}, defines $A$ to be a \emph{graded isolated singularity} if $\Proj A$ has finite global dimension. When $S$ is an AS regular algebra and $z \in S_2$ is a central (or normal) regular element, it is interesting to study the \emph{noncommutative quadric hypersurface} $\Proj A$, where $A = S/\la z \ra$, and to determine if $A$ is a graded isolated singularity. 

Smith and Van den Bergh \cite{Smith-vdB} study a notion of \emph{rulings} associated to noncommutative quadric hypersurfaces $\Proj A$, where $A = S/\la z \ra$. Let $\MCM(A)$ be the category of maximal Cohen-Macaulay modules over $A$. They define $$\mathbb{M} = \{M \in \MCM(A) : M \text{ is indecomposable}, M_0 \in \k^2, M = M_0A\}.$$ For $M \in \mathbb{M}$, the \emph{ruling corresponding to $M$} is: $$\{\coker \phi : \phi \in \P(\Hom_{\gr A}(M(-1), A))\}.$$ Under the hypothesis that $A$ is a domain, \cite[Section 7.2]{Smith-vdB} proves that each ruling consists of a $\P^1$s worth of line modules of $A$, and $\Proj A$ has two rulings when $A$ is a graded isolated singularity, otherwise it has one ruling. 

We now discuss the above general results in the context of the algebra $S$ (see \ref{definition S}) of this paper. Recall that the algebra $S$ has a single quadratic central element (up to scale), $z = x_1^2+x_2^2+x_3x_4+x_4x_3$. 

\begin{proposition}\label{central hypersurface}
Let $S$ be the algebra defined in \ref{definition S}. Let $z = x_1^2+x_2^2+x_3x_4+x_4x_3 \in S_2$ and let $A = S/\la z \ra$. Then: 
\begin{itemize}
\item[(1)] $A$ is not a domain,
\item[(2)] $A$ is a graded isolated singularity.
\end{itemize}
\end{proposition}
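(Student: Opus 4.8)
The plan is to analyze the algebra $A = S/\langle z\rangle$ using the Gröbner basis and monomial basis machinery already developed for $S$, together with the structure of the point scheme. For part (1), I would exhibit an explicit zero divisor. Since $z = x_1^2 + x_2^2 + x_3x_4 + x_4x_3$ is the unique central quadratic element of $S$ and the point scheme of $S$ consists of twenty points (Theorem \ref{point schemes, automorphisms of S, T}), the quadric hypersurface $\mathcal{V}(z) \subset \P^3$ should be singular or reducible; if the associated quadratic form on $S_1$ has rank less than four (as a symmetric bilinear form, after symmetrizing $x_3x_4 + x_4x_3$), then $z$ factors — up to lower-order correction terms — and one can hunt for homogeneous $f, g$ with $fg \in \langle z\rangle$ but $f,g \notin \langle z\rangle$. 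Concretely, I would pass to the Gröbner basis of the defining ideal of $A$ (obtained by adjoining $z$ to the Gröbner basis of Lemma \ref{G-basis} and completing via the diamond lemma), compute a monomial $\k$-basis for $A$ in low degrees, and then search for a product of normal forms whose reduction lands in the span of the leading terms coming from $z$; this is a finite computation in degrees $\leq 3$ or $4$ and can be done by hand or in \emph{Macaulay2}. The Hilbert series of $A$ is $(1-t^2)(1-t)^{-4} = (1+t)(1-t)^{-3}$ by Proposition \ref{normal regular sequences via Hilbert series} applied to the central regular element $z$, which pins down the dimensions to check.

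For part (2), that $A$ is a graded isolated singularity, I would use Ueyama's criterion together with the noncommutative quadric hypersurface theory. By the work of Smith--Van den Bergh \cite{Smith-vdB} and Ueyama \cite{U}, for $A = S/\langle z\rangle$ with $S$ AS regular of dimension four and $z \in S_2$ central regular, $A$ is a graded isolated singularity if and only if the associated Clifford-type algebra $C$ (equivalently, the even part of the generalized Clifford algebra of the pair $(S, z)$) is semisimple, or equivalently if and only if $\Proj A$ has exactly two rulings. So the strategy is: compute the relevant finite-dimensional algebra governing $\MCM(A)$ — for instance the stable endomorphism algebra $\underline{\End}(\k)$ in $\MCM(A)$, or the even Clifford algebra — and verify that it is semisimple. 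Alternatively, and perhaps more cleanly given what is already in the paper, I would invoke the line scheme: the line modules of $S$ lying on $\mathcal{V}(z)$ correspond to the components $C_i$ whose associated quadric $Q_i$ equals $\mathcal{V}(z)$, and from Table \ref{table:preimageLSofS} one reads off which rulings appear; if both rulings on the quadric $Q = \mathcal{V}(x_1^2+x_2^2+x_3x_4+x_4x_3)$ (up to change of coordinates this is $C_9$ or $C_{10}$'s quadric after diagonalizing) arise from line modules of $A$ — even though only one ruling gives line modules of $S$ — then $\Proj A$ has two rulings and $A$ is a graded isolated singularity. I would make this precise by checking that $\gldim \Proj A < \infty$ directly: since $A$ is a noetherian quadric hypersurface quotient of a Koszul AS regular algebra, $\Proj A$ has finite global dimension iff the Koszul dual $A^!$ is finite-dimensional, or iff a certain matrix factorization of $z$ over $S$ is "nondegenerate"; the rank condition on $z$ computed in part (1) feeds directly into this.

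The main obstacle will be part (2): establishing finite global dimension of $\Proj A$ rigorously rather than by computer evidence. The cleanest route is probably to identify the finite-dimensional algebra $\Lambda = \Ext^*_{A}(\k,\k)$ restricted appropriately, or the even Clifford algebra of $z$, and prove it is semisimple — this reduces to checking that a finite-dimensional algebra (computable from the relations of $S$ and the coefficients of $z$) has trivial Jacobson radical, which in turn comes down to showing a determinant or discriminant is nonzero. Because $S$ has the special feature that $z$ is the \emph{only} central element in degree two, and because the twenty points of $\mathcal{P}(S)$ constrain the singular locus of $\mathcal{V}(z)$, I expect the discriminant to be a nonzero scalar, but carrying this out requires either a careful hand computation with the $M_i$ matrices from Theorem \ref{Koszul} reduced modulo $z$, or an appeal to \cite[Section 7.2]{Smith-vdB} combined with the explicit ruling data in Table \ref{table:preimageLSofS}. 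I would present part (2) via the ruling-counting argument if the bookkeeping permits, falling back on the Clifford-algebra semisimplicity criterion otherwise.
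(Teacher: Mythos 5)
Your proposal has genuine gaps in both parts, and the route you say you would prefer for (2) would not go through. For part (1), the heuristic you lean on points the wrong way: the symmetrization of $z$ is $x_1^2+x_2^2+2x_3x_4$, a rank-four form, so the commutative quadric $\V(x_1^2+x_2^2+2x_3x_4)\subset\P^3$ is smooth and your ``rank less than four, hence $z$ factors'' branch never fires. The failure of $A$ to be a domain is a purely noncommutative phenomenon: using the defining relations of $S$ (e.g.\ $x_1x_2=x_3^2$, $x_2x_1=x_4^2$, $x_1x_3=x_2x_4$, $x_2x_3=x_3x_1$, $x_4x_1=x_3x_2$, $x_4x_2=x_1x_4$) one checks that all cross terms cancel and $z=(x_1-x_2-x_3-x_4)^2$ in $S$, so the degree-one element $x_1-x_2-x_3-x_4$ is a nonzero nilpotent in $A$. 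Your fallback low-degree Gr\"obner search would eventually uncover this, but as written the proposal exhibits no zero divisor, and its guiding principle predicts there should not be an obvious one.

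For part (2), the ruling-counting argument you say you would present first cannot be made rigorous from what is available: the results of \cite[Section 7.2]{Smith-vdB} on rulings are proved under the hypothesis that $A$ is a domain, which part (1) has just destroyed; the statement there runs in the direction ``graded isolated singularity $\Rightarrow$ two rulings,'' whereas you need a converse; and identifying those rulings with the rulings recorded in Table \ref{table:preimageLSofS} is precisely one of the open questions posed in Section \ref{sect-questions}, not data you can read off. Your fallback is the correct idea and is in fact the paper's argument: pass to the Koszul dual. The paper takes $w=x_1^2\in A^!_2$, which is central and regular in $A^!$ with $A^!/\la w\ra\cong S^!$, and computes that $C(A)=A^![w^{-1}]_0\cong \k^4\oplus M_2(\k)$, a semisimple algebra, whence $\gldim\Proj A<\infty$ by \cite[Proposition 5.2(2)]{Smith-vdB}. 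The gap in your write-up is that this computation is the whole content of the proof of (2), and you leave it at ``I expect the discriminant to be a nonzero scalar''; nothing in the proposal actually verifies semisimplicity of the relevant finite-dimensional algebra.
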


\begin{proof}
    For (1), one checks that in $S$, $z = (x_1-x_2-x_3-x_4)^2.$
    For (2), let $w = x_1^2 \in A^{!}_2$. Then $w$ is a central regular element of $A^{!}$ and $A^{!}/\la w \ra \cong S^{!}$. The algebra $C(A) = A^{!}[w^{-1}]_0$ is isomorphic to the algebra $\k^4 \oplus M_2(\k)$; in particular, $C(A)$ is semisimple. By \cite[Proposition 5.2(2)]{Smith-vdB} $\Proj A$ has finite global dimension. 
\end{proof}

Based on the fact that the algebra $C(A)$ in the previous proof is isomorphic to $\k^4 \oplus M_2(\k)$, the set $\mathbb{M}$ should be a singleton. 

\begin{question}
Let $S$ be the algebra defined in \ref{definition S}. Let $z = x_1^2+x_2^2+x_3x_4+x_4x_3 \in S_2$ and let $A = S/\la z \ra$. What is the ruling corresponding to each $M \in \mathbb{M}$?
\end{question}

\begin{question} Referring to Table \ref{table:preimageLSofS}, the line modules corresponding to component $C_9$ are all annihilated by the central element $z = x_1^2+x_2^2+x_3x_4+x_4x_3 \in S$. Hence they are line modules for the quotient $A = S/\la z \ra$.  What is the relationship between the notion of ruling defined by \cite{Smith-vdB} and the ruling given in Table 1?
\end{question}

The paper \cite{Smith-vdB} studies the case of a central element $z \in S_2$, and many of the results on rulings are proved under the hypothesis that the algebra $A = S/\la z \ra$ is a domain. So we have the following general question.

\begin{question}
Suppose that $S$ is a Koszul AS regular algebra of global dimension four, and $w \in S_2$ is a normal regular element. Let $A = S/\la w \ra$. Is there a good notion of rulings on the noncommutative projective quadric $\Proj A$? If so, what is the relationship between such rulings and line modules over $A$, or $S$?
\end{question}

Assuming that the last questions have reasonable answers, in the case of the algebra $S$ (see \ref{definition S}), we have the following.

\begin{question} The algebra $S$ has three non-central linearly independent quadratic normal elements: $\omega_i$, $1 \leq i \leq 3$. Let $A_i = S/\la w_i \ra$. Is $A_i$ a graded isolated singularity? Referring to Table \ref{table:preimageLSofS}, the line modules corresponding to components, $C_5$, $C_6$ and $C_{10}$ are annihilated by $w_i$, $1 \leq i \leq 3$, respectively.  What is the relationship between rulings defined via MCMs and the rulings given in Table \ref{table:preimageLSofS}?
\end{question}



\bibliographystyle{plain}
\bibliography{bibliog2}

\end{document}